\renewcommand{\C}{\mathcal{C}}
\renewcommand{\S}{\mathcal{S}}
\newcommand{\cht}{\text{coht}}
\newcommand{\monster}{Theorem 3.15 } %only need to change this if numbering in paper 1 changes
\let\svwidehat\widehat
\renewcommand\widehat[1]{\ifx\relax#1\relax\svwidehat{\phantom{x}}\else\svwidehat{#1}\fi}
\renewcommand{\hat}{\widehat}
\title{Cardinalities of Prime Spectra of Precompletions}
\author{Erica Barrett, Emil Graf, S. Loepp, Kimball Strong, and Sharon Zhang}
\begin{document}
\maketitle

\begin{abstract}
    % Let $T$ be a complete local (Noetherian) ring. and let $A$ be a subring of $T$ such that $\widehat{A} = T$. The cardinalities of $T$, $A$, and $\text{Spec}(A)$ are related. We determine necessary conditions on $T$ so that $A$ has a given cardinality, and likewise for $\text{Spec}(A)$. We can generalize these conclusions beyond countable and uncountable cardinalities. Moreover, we use the technique of ``partial completions" to control the cardinalities of various subsets of $\text{Spec}(A)$, resulting in precompletions of $T$ with spectra of mixed cardinalities throughout.
    Given a complete local (Noetherian) ring $T$, we find necessary and sufficient conditions on $T$ such that there exists a local domain $A$ with $|A| < |T|$ and $\widehat{A} = T$, where $\widehat{A}$ denotes the completion of $A$ with respect to its maximal ideal. We then find necessary and sufficient conditions on $T$ such that there exists a domain $A$ with $\widehat{A} = T$ and $|\Spec(A)| < |\Spec(T)|$. Finally, we use ``partial completions'' to create local rings $A$ with $\hat{A} = T$ such that $\Spec(A)$ has varying cardinality in different varieties. 
\end{abstract}

% Introduction
\section{Introduction}
\par Since the structure of complete local rings is better understood than that of local rings that are not complete, it is important to study the relationship between a local ring and its completion. To examine this relationship, we work backwards. 
Given a complete local ring $T$, we wish to determine when there exists a local ring $A$ with certain ``prescribed properties" such that the completion of $A$ is $T$. We call this local ring $A$ a ``precompletion" of $T$. Importantly, a complete local ring $T$ usually has multiple distinct precompletions, each of which may satisfy different properties.
\par Past results have focused on precompletions satisfying prescribed properties such as being integral domains (see \cite{lech}), UFDs (see \cite{heitmann93}), and/or rings having prescribed generic formal fibers (see, for example, \cite{pippa}). We focus on properties concerning the prime spectrum (or simply spectrum) of $A$, denoted by $\Spec(A)$. More specifically, given a complete local ring $T$, we focus on the possible cardinalities of its precompletions as well as the possible cardinalities of the spectra of its precompletions.
% \par In general, the spectrum of a ring can be viewed as a partially-ordered set with respect to inclusion. Thinking of spectra this way allows us to investigate not only the cardinality of $\Spec(A)$ itself, but also various subsets of $\Spec(A)$. 
\par First, we consider the case where the precompletion is an integral domain. In \cite{lech}, Lech completely characterizes complete local rings that are the completions of local domains; specifically, a complete local ring $T$ has a local domain precompletion if and only if
\begin{enumerate}
    \item no integer of $T$ is a zerodivisor, and
    \item the maximal ideal of $T$ is equal to (0) or is not an associated prime ideal of $T$.
\end{enumerate}
We build upon this result and determine when $T$ is the completion of a local domain $A$ with cardinality smaller than $|T|$. 
We find that the existence of such a domain $A$ requires only the additional condition that $|T/M| < |T|$, where $M$ is the maximal ideal of $T$.
We then assume that the dimension of $T$ is at least two and answer the analogous question for the cardinality of $\Spec(A)$. That is, when does $T$ have a precompletion $A$ such that $|\Spec(A)| < |T|$? We show that, if the dimension of $T$ is at least two, then $|\Spec(T)| = |T|$ and so this question is equivalent to asking under what conditions does $T$ have a precompletion $A$ such that $|\Spec(A)| < |\Spec(T)|$?  Again, we find the existence of such an $A$ requires only the additional condition $|T/M| < |T|$.
\par Next, we examine the effect of ``partial completions" (completions at a non-maximal ideal) on spectra of rings. 
Previous work in \cite{wiegand} shows that such rings need not have uniform cardinalities. 
That is, different varieties of the prime spectrum can have different cardinalities; natural examples arise in the case of mixed polynomial-power series rings. 
In general, this approach allows us to obtain precompletions with different cardinalities in different varieties, and we apply this technique to find precompletions with ``unbalanced cardinalities.''  As an illustration, see Example \ref{standard}.
\par All rings in this paper are commutative with identity element $1$.  We use ``local" to describe a Noetherian ring with exactly one maximal ideal and ``quasi-local" to describe a ring (not necessarily Noetherian) with exactly one maximal ideal. When we say $(T,M)$ is a local ring, we mean that $T$ is a local ring with maximal ideal $M$.  Finally, unless otherwise noted, if $(A,{\bf m})$ is a local ring, we use $\widehat{A}$ to denote the completion of $A$ at ${\bf m}$.

% Section One
\section{Cardinalities of Domain Precompletions}

\par We begin by investigating the cardinalities of integral domain precompletions and their spectra. Domains are relatively simple compared to general rings because they have only one minimal prime ideal, namely, (0). This simplicity makes them desirable to study.

\par The following theorem is a result from \cite{lech}, and it gives necessary and sufficient conditions for a complete local ring to be the completion of a local domain. \\

\begin{thm} [\cite{lech} Theorem 1]\label{Lech}
A complete local ring $(T, M)$ is the completion of a local integral domain if and only if 
\begin{enumerate}[label={(\roman*)}]
    \item No integer of T is a zerodivisor
    \item Unless equal to $(0)$, $M \notin \Ass(T)$.
\end{enumerate}
\end{thm}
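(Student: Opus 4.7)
The plan is to prove the two directions separately. For \textbf{necessity}, suppose $A$ is a local domain with $\widehat{A} = T$, so the completion map $A \hookrightarrow T$ is faithfully flat. For (i), any integer $n \cdot 1_T$ is the image of $n \cdot 1_A$; if it is nonzero in $T$, then $n \cdot 1_A$ is nonzero in $A$ and therefore a nonzerodivisor (since $A$ is a domain), and this property passes to $T$ by flatness. For (ii), suppose $M \neq (0)$; then $A$ cannot be a field (else $T = \widehat{A} = A$ would have $M = (0)$), so $\mathfrak{m}_A \neq (0)$. Any nonzero $a \in \mathfrak{m}_A$ is a nonzerodivisor in $A$ and, by flatness, in $T$; since $a \in \mathfrak{m}_A \subseteq M$, the ideal $M$ contains a nonzerodivisor, so $M \notin \Ass(T)$.

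The \textbf{sufficiency} direction is the substantial one, and the plan is to construct $A$ explicitly as a nested union of Noetherian quasi-local domain subrings of $T$. Let $\Pi$ denote the prime subring of $T$; condition (i) ensures $\Pi \hookrightarrow T$. Set $R_0 := \Pi_{(M \cap \Pi)}$ and build a chain $R_0 \subseteq R_1 \subseteq \cdots \subseteq T$, where at each successor step one adjoins a carefully chosen element $t_\alpha \in T$ to form $R_{\alpha + 1} = R_\alpha[t_\alpha]_{(R_\alpha[t_\alpha]\, \cap\, M)}$. The $t_\alpha$ are selected as high-order approximations (modulo powers of $M$) of a pre-fixed enumeration of elements of $T$, so that in the limit every element of $T$ is approximated modulo any power of $M$ by elements of $A := \bigcup_\alpha R_\alpha$.

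The key technical input is an extension lemma: given a Noetherian quasi-local domain subring $R \subseteq T$ with $\widehat{R} = T$, for any $t \in T$ and any $N \geq 1$ one can find $t' \in T$ with $t \equiv t' \pmod{M^N}$ such that $R' := R[t']_{(R[t']\, \cap\, M)}$ is a Noetherian domain with $\widehat{R'} = T$. Condition (ii) is decisive here: when $M \neq (0)$, every associated prime of $T$ lies strictly below $M$, so prime avoidance provides room to choose $t'$ outside each associated prime, ensuring the relevant ``generic'' element is a nonzerodivisor and that $R'$ remains a domain. Extending $R_\alpha$ to $R_{\alpha+1}$ via this lemma preserves both the domain and the completion-equals-$T$ properties.

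The main obstacle will be showing that the limit $A$ is itself Noetherian, since a directed union of Noetherian rings need not be Noetherian. The plan is to enforce at each stage that $A/(\mathfrak{m}_A^n \cap A) \cong T/M^n$ (through the $M^N$-approximations), which will simultaneously yield $\widehat{A} = T$ and, via a standard criterion based on the Noetherianness of the completion together with control of $\mathfrak{m}_A/\mathfrak{m}_A^2$, imply that $A$ is Noetherian. The trivial case $M = (0)$ is handled by taking $A = T$. Once Noetherianness is established, $A$ is the desired local domain precompletion.
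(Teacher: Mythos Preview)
The paper does not prove this theorem; it is quoted from Lech and only cited. That said, the paper's Section~2 machinery (Definition~\ref{pb-subring} through Theorem~\ref{general domain construction}) amounts to an independent proof of the sufficiency direction, so a comparison is meaningful.

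Your necessity argument is correct and standard. For sufficiency, the overall architecture---build $A$ as a union of quasi-local domain subrings of $T$, adjoining elements chosen to avoid the associated primes---matches the paper's PB-subring construction. However, two points in your sketch do not hold up.

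First, your extension lemma hypothesizes $\widehat{R} = T$ for each intermediate ring, and you apply it starting from $R_0 = \Pi_{(M\cap\Pi)}$. But the completion of the localized prime subring is almost never $T$; the intermediate $R_\alpha$ are small subrings whose completions are not $T$ until the limit. The paper's framework tracks instead the much weaker invariants $R \cap Q = (0)$ for $Q \in \Ass(T)$ together with a cardinality bound (Definition~\ref{pb-subring}), which \emph{do} hold from $R_0$ onward.

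Second, and more substantively, your plan for Noetherianness is incomplete. Arranging that $A$ surjects onto every $T/M^n$ yields the right $M$-adic approximations, but it does not by itself force $M^n \cap A = \mathfrak{m}_A^n$, nor does ``Noetherian completion plus control of $\mathfrak{m}_A/\mathfrak{m}_A^2$'' constitute a usable criterion here without further input. The paper handles this via Heitmann's criterion (Proposition~\ref{cpm}): one needs $A \to T/M^2$ surjective \emph{and} $IT \cap A = I$ for every finitely generated ideal $I$ of $A$. Achieving the latter ``closing up'' condition requires a separate adjunction step (Lemma~\ref{Deb 3.5}) interleaved with the transcendental-element step (Lemma~\ref{Deb 3.8}); see Lemma~\ref{Deb 3.11}. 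Your outline omits this mechanism entirely, and without it there is no reason the union $A$ is Noetherian.
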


% If $A$ is a local domain with $\text{dim}(A) = 2$, then the structure of $\Spec(A)$ is determined entirely by $|\Spec(A)|$, which is related to the cardinality of $A$ itself, as we will show in the next section.
In this section, we determine when a given complete local ring $T$ has a domain precompletion $A$ with $|A| < |T|$. As a corollary, we find necessary and sufficient conditions for $T$ to be the completion of a countable local domain.
To do this, we first introduce PB-subrings. A PB-subring of $T$ is a subring that satisfies cardinality and intersection properties that we will need to construct our desired ``small" domain precompletions.

\begin{dfn}\label{pb-subring}
    Let $(T, M)$ be a complete local ring and $(R, R \cap M)$ a quasi-local subring of $T$. Suppose the following conditions hold:
    \begin{enumerate}[label={(\roman*)}]
       \item $|R| \leq \sup\{\aleph_0, |T/M|\}$, with equality implying $T/M$ is countable
       \item $R \cap Q = (0)$ for every $Q \in \Ass(T)$
    \end{enumerate}
    Then we call $R$ a \textbf{partially-built-subring} of $T$, or \textbf{PB-subring} for short. 
\end{dfn}

%We will now consider the case where $T/M$ is countable. In a subsequent section, we address the cases where $T/M$ is not countable.
The following proposition is from \cite{heitmann94}. \\

% completion-proving machine
\begin{prop}[\cite{heitmann94} Proposition 1] \label{cpm}
    If $(A, A \cap M)$ is a quasi-local subring of a complete local ring $(T, M)$, the map $A \rightarrow T/M^2$ is onto, and $IT \cap A = I$ for all finitely generated ideals $I$ of $A$, then $A$ is Noetherian and the natural homomorphism $\widehat{A} \rightarrow T$ is an isomorphism.
\end{prop}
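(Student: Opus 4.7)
My plan is to first show $\mathbf{m} := A \cap M$ is finitely generated with $\mathbf{m}T = M$, bootstrap this to the isomorphisms $A/\mathbf{m}^n \cong T/M^n$ for every $n$, and then deduce both conclusions. Since $T$ is Noetherian, pick generators $t_1, \ldots, t_k$ of $M$; using surjectivity of $A \to T/M^2$, lift each to $a_i \in A$ with $a_i - t_i \in M^2$, so $a_i \in \mathbf{m}$. Then $\mathbf{m}T + M^2 = M$, and Nakayama's lemma gives $M = (a_1, \ldots, a_k)T$. Applying the hypothesis $IT \cap A = I$ to the finitely generated ideal $I = (a_1, \ldots, a_k)$ yields $\mathbf{m} = M \cap A = IT \cap A = I$, so $\mathbf{m}$ is finitely generated and $\mathbf{m}^n T = (\mathbf{m}T)^n = M^n$.

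Next I would prove by induction on $n$ that $A + M^n = T$; the base case $A + M = T$ is immediate from $A \to T/M^2 \to T/M$ being surjective. For the inductive step it suffices to show $M^n \subseteq A + M^{n+1}$: given $m \in M^n = \mathbf{m}^n T$, expand $m = \sum c_j \mu_j$ with $\mu_j$ a degree-$n$ monomial in the $a_i$ (so $\mu_j \in A$) and $c_j \in T$, then split $c_j = b_j + d_j$ with $b_j \in A$ and $d_j \in M$ via $A + M = T$, giving $m = \sum b_j \mu_j + \sum d_j \mu_j \in A + M \cdot M^n = A + M^{n+1}$. Because $\mathbf{m}^n$ is finitely generated, the hypothesis also yields $A \cap M^n = A \cap \mathbf{m}^n T = \mathbf{m}^n$, and combining the two facts identifies $A/\mathbf{m}^n$ with $T/M^n$ compatibly in $n$.

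The crucial step, which I expect to be the main obstacle, is showing $A$ is Noetherian: the hypothesis $IT \cap A = I$ is assumed only for finitely generated $I$, and must be promoted to arbitrary ideals. For any ideal $J \subset A$, the ideal $JT$ is finitely generated over the Noetherian ring $T$, and its generators may be chosen as $T$-linear combinations of finitely many elements $j_1, \ldots, j_m \in J$. Setting $I = (j_1, \ldots, j_m)$, we have $IT = JT$, so $J \subseteq JT \cap A = IT \cap A = I \subseteq J$, forcing $J = I$ to be finitely generated.

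Finally, taking the inverse limit of the compatible isomorphisms $A/\mathbf{m}^n \cong T/M^n$ produces $\widehat{A} = \varprojlim A/\mathbf{m}^n \cong \varprojlim T/M^n = T$, and by construction this is the natural map induced by the inclusion $A \hookrightarrow T$.
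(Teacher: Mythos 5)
Your proof is correct and complete: the chain $\mathbf{m}T=M$ via Nakayama, $A\cap M^n=\mathbf{m}^n$ and $A+M^n=T$ giving $A/\mathbf{m}^n\cong T/M^n$, and the finite-generation trick promoting $IT\cap A=I$ from finitely generated to arbitrary ideals is exactly the standard argument for this result. The paper itself offers no proof here---it quotes the proposition from Heitmann's 1994 paper---and your write-up faithfully reconstructs that cited argument, so there is nothing to add.
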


By constructing our rings so that the two conditions in the above proposition are satisfied, we ensure that our rings have the desired completion.
\par The next two lemmas are from \cite{heitmann93}. They will allow us to find elements transcendental over a subset of elements in our rings, which we then adjoin to build a larger ring. \\

\begin{lem} [\cite{heitmann93} Lemma 2]\label{heitmann93.2}
    Let $T$ be a complete local ring with maximal ideal $M$, $C$ a countable set of primes in $\Spec(T)$ such that $M \notin C$, and $D$ a countable set of elements in $T$. If $I$ is an ideal in $T$ which is contained in no single $P \in C$, then $I \nsubseteq \bigcup \{ r + P \mid P \in C, r \in D \}$.
\end{lem}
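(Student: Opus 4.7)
My plan is to prove this by a Baire-category argument on the ideal $I$, equipped with the $M$-adic topology inherited from $T$. The ring $T$ is a complete metric space under the $M$-adic metric (well defined because $\bigcap_n M^n = 0$ by Krull's intersection theorem), and $I$, being closed in $T$ (any ideal of a Noetherian complete local ring is $M$-adically closed, via Krull applied to $T/I$), inherits the structure of a complete metric space.

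Given this setup, for each pair $(P, r) \in C \times D$ the coset $r + P$ is closed in $T$, so $(r + P) \cap I$ is closed in $I$; I would next show that $(r + P) \cap I$ has empty interior in $I$. A basic open neighborhood of a point $y_0 \in (r + P) \cap I$ inside $I$ is of the form $y_0 + (I \cap M^n)$ for some $n \geq 1$. If this neighborhood were contained in $r + P$, one would deduce $I \cap M^n \subseteq P$. Since $P \neq M$ forces $M \not\subseteq P$, primality of $P$ yields $M^n \not\subseteq P$, while $I \not\subseteq P$ holds by hypothesis; together these give $I \cdot M^n \not\subseteq P$, hence $I \cap M^n \not\subseteq P$, a contradiction. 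Thus each $(r + P) \cap I$ is nowhere dense in $I$.

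The set $C \times D$ is countable, so $\bigcup \{(r + P) \cap I : P \in C,\, r \in D\}$ is a countable union of nowhere-dense subsets of the complete metric space $I$. Baire's theorem then forces this union to be a proper subset of $I$, producing an element of $I$ avoiding every coset $r + P$, which is exactly the conclusion. The main obstacle is the nowhere-dense verification, which condenses both hypotheses ($M \notin C$ and $I$ in no $P \in C$) into the single product inequality $I \cdot M^n \not\subseteq P$; once this is secured, Baire does the rest. As a fallback, the same result can be obtained by an explicit construction: enumerate $C \times D$ and inductively build a Cauchy sequence $(x_n) \subseteq I$ with $x_n - r_k \notin P_k$ for $k \leq n$, using Krull's intersection theorem at each stage to choose an $N_k$ with $x_k - r_k \notin P_k + M^{N_k}$ and then confining all subsequent increments to $M^{N_k}$ so that the avoidance property survives passage to the $M$-adic limit.
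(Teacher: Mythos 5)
Your proof is correct. The paper does not prove this lemma -- it is quoted from \cite{heitmann93} -- but your argument is essentially Heitmann's original one: his proof is the successive-approximation construction you give as a fallback (enumerate $C \times D$ and build a Cauchy sequence in $I$ avoiding each coset, using $I M^{n} \not\subseteq P$, hence $I \cap M^{n} \not\subseteq P$, to perturb at each stage), and your Baire-category formulation is the same argument in topological packaging, with the crux identical in both versions.
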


\begin{lem}[\cite{heitmann93} Lemma 3]\label{heitmann93.3}
    Let $(T, M)$ be a local ring. Let $C \subseteq \Spec(T)$, let $I$ be an ideal such that $I \nsubseteq P$ for every $P \in C$, and let $D$ be a subset of $T$. Suppose $|C \times D| < |T/M|$. Then $I \nsubseteq \bigcup \{ r + P \mid P \in C, r \in D \}$.
\end{lem}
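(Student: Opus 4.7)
The plan is to reduce the problem to a coset-covering question in the finite-dimensional $k$-vector space $V := I/MI$, where $k := T/M$, and then invoke the classical fact that a nonzero finite-dimensional vector space over $k$ is not a union of fewer than $|k|$ proper affine subspaces.

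After disposing of the trivial case $I=(0)$ (which forces $C=\emptyset$, making the conclusion vacuous), I may assume $I\neq(0)$. Since $T$ is Noetherian, $I$ is finitely generated, so $V=I/MI$ is a finite-dimensional $k$-vector space, nonzero by Nakayama's lemma. For each $P\in C$ I set $V_P:=(I\cap P+MI)/MI\subseteq V$. The first key step is to verify that $V_P$ is a \emph{proper} subspace: if $I\cap P+MI=I$, then reducing modulo $I\cap P$ gives $I/(I\cap P)=M\cdot(I/(I\cap P))$, and Nakayama, applied to the finitely generated $T$-module $I/(I\cap P)$, forces $I\subseteq P$, contradicting the hypothesis.

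Next, for each $(P,r)\in C\times D$ with $(r+P)\cap I$ nonempty, I fix a witness $i_{P,r}\in(r+P)\cap I$; then $(r+P)\cap I=i_{P,r}+(I\cap P)$, whose image in $V$ is the coset $\bar{i}_{P,r}+V_P$, a proper affine subspace of $V$. Ranging over $C\times D$ produces at most $|C\times D|$ such proper affine subspaces of $V$. Since $|C\times D|<|k|$, the affine-covering fact supplies some $\bar{i}\in V$ outside their union. Lifting $\bar{i}$ to $i\in I$ and using $(r+P)\cap I\subseteq i_{P,r}+(I\cap P)+MI$, I conclude $i\notin(r+P)\cap I$, and since $i\in I$ this forces $i\notin r+P$ for every $(P,r)$, as desired.

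The main technical point to justify carefully is the affine-covering fact itself: no nonzero finite-dimensional $k$-vector space is a union of fewer than $|k|$ proper affine subspaces. This is standard (a generic line in $V$ meets each such subspace in at most one point), but the finite-$k$ case is tight (consider $|k|$ parallel hyperplanes slicing $V$), so the strict inequality $|C\times D|<|T/M|$ is essential and cannot be relaxed. An alternative would be to first establish the special case $D=\{0\}$ (strong prime avoidance with $|C|<|k|$) via the same Nakayama argument applied to subspaces rather than affine subspaces, then pick $x\in I$ outside every $P\in C$ meeting $D$ and reduce the general statement to finding $s\in T$ with $sx$ avoiding certain cosets of the $P$'s --- a problem one can settle by projecting down to $T/M$; but the direct vector-space approach above seems cleaner.
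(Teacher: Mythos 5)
The paper does not actually prove this lemma --- it is quoted verbatim from \cite{heitmann93} and used as a black box --- so there is no in-paper argument to compare against; I can only assess your proof on its own terms, and it is correct. Reducing modulo $MI$ and observing that each nonempty $(r+P)\cap I$ equals $i_{P,r}+(I\cap P)$, hence maps into the coset $\bar i_{P,r}+V_P$ of the proper subspace $V_P=(I\cap P+MI)/MI$ (properness by Nakayama applied to $I/(I\cap P)$, exactly as you say), cleanly converts the statement into the assertion that a nonzero finite-dimensional $k$-vector space is not a union of fewer than $|k|$ proper affine subspaces; you also correctly dispatch the degenerate cases and correctly locate where the strict inequality $|C\times D|<|T/M|$ is indispensable. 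The one step you should not leave as a parenthetical is the covering fact itself: the ``generic line'' argument needs a direction vector $w$ lying outside every $V_P$, and the existence of such a $w$ is precisely the \emph{linear} version of the same covering statement, so as written the justification is mildly circular. The clean fix is to prove the affine statement outright by induction on $\dim V$: fix a nonzero functional $f$ and slice $V$ into the $|k|$ parallel hyperplanes $f^{-1}(t)$; each affine subspace whose direction space lies in $\ker f$ meets only one slice, so some slice avoids all of those, and the remaining subspaces cut that slice in proper affine subspaces of it, closing the induction (the base case $\dim V=1$ is a count of fewer than $|k|$ points in a $|k|$-point set). With that supplied, your proof is complete and self-contained, which is arguably an improvement on the paper's bare citation.
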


Lemmas \ref{Deb 3.6}, \ref{Deb 3.8}, \ref{Deb 3.9}, and \ref{Deb 3.11} and their proofs are quoted almost directly from \cite{deb} with some minor modifications. Because \cite{deb} is not publicly available, we include all details of the proofs. These lemmas provide the framework for Theorem \ref{general domain construction}, which is our own original result.\\

\begin{lem}\label{Deb 3.6}
    Let $(T, M)$ be a complete local ring with $M \notin \Ass(T)$. Let $R$ be a PB-subring of $T$ and let $C = \Ass(T)$. For each $P \in C$, define $D_{(P)}$ to be a full set of coset representatives of $T/P$ that are algebraic over $R$. Let $D = \bigcup_{P \in C} D_{(P)}$. If $J$ is an ideal of $T$ which is not contained in any single $P \in C$, then $J \nsubseteq \bigcup \{ r + P \mid P \in C, r \in D \}$.
\end{lem}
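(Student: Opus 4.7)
The plan is to split into two cases according to whether the residue field $T/M$ is countable or uncountable, because the PB-subring hypothesis bounds $|R|$ in these two regimes in precisely the way the two Heitmann lemmas (Lemmas \ref{heitmann93.2} and \ref{heitmann93.3}) require. In either case, once $|D|$ is bounded appropriately, the desired non-containment will follow as a direct application of one of those lemmas.

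Before doing the case split, I would bound $|D|$. Fix $P \in C$. Every element of $D_{(P)}$ is algebraic over $R$, hence a root of some nonzero polynomial in $R[x]$. Since $P$ is prime, $T/P$ is a domain, so each nonzero $f \in R[x]$ has at most $\deg(f)$ roots in $T/P$; equivalently, at most finitely many cosets of $P$ in $T$ contain a root of $f$. Summing over all nonzero $f \in R[x]$ and using $|R[x]| = \max\{|R|, \aleph_0\}$, the number of cosets of $P$ containing an element algebraic over $R$ is at most $\max\{|R|, \aleph_0\}$, so $|D_{(P)}| \leq \max\{|R|, \aleph_0\}$. Since $T$ is Noetherian, $C = \Ass(T)$ is finite, so $|D| \leq \max\{|R|, \aleph_0\}$ as well.

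In Case 1, when $T/M$ is countable, Definition \ref{pb-subring}(i) forces $|R| \leq \aleph_0$, making both $C$ and $D$ countable. Because $M \notin \Ass(T) = C$ and $J \nsubseteq P$ for every $P \in C$ by hypothesis, Lemma \ref{heitmann93.2} applies directly to $J$ and yields the conclusion. In Case 2, when $T/M$ is uncountable, Definition \ref{pb-subring}(i) forces the strict inequality $|R| < |T/M|$ (equality in the PB bound would require $T/M$ countable). Combined with the bound above, $|D| \leq \max\{|R|, \aleph_0\} < |T/M|$, and finiteness of $C$ gives $|C \times D| < |T/M|$, which is precisely the hypothesis of Lemma \ref{heitmann93.3}; applying it yields the conclusion.

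The main piece of genuine content is the upper bound on $|D_{(P)}|$: I expect the key observation to be that primeness of $P$ makes $T/P$ a domain, so that each polynomial over $R$ contributes only finitely many algebraic cosets and $|D_{(P)}|$ ends up controlled by $|R|$ rather than by $|T|$. Everything else reduces to straightforward cardinality arithmetic and an off-the-shelf application of the appropriate Heitmann lemma.
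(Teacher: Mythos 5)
Your proposal is correct and follows essentially the same route as the paper: bound $|D_{(P)}|$ by $\max\{|R|,\aleph_0\}$ via the algebraicity/root-counting argument in the domain $T/P$, use finiteness of $\Ass(T)$, and then dispatch the two regimes with Lemma \ref{heitmann93.2} and Lemma \ref{heitmann93.3} respectively. The only cosmetic difference is that you split on whether $T/M$ is countable while the paper splits on whether $R$ is countable; both partitions are exhaustive and lead to the same applications of the two lemmas.
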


\begin{proof}
    Since $T$ is Noetherian, $C$ must be finite. First, suppose $|R| \leq \aleph_0$. Since $R$ is countable, given $P$, $D_{(P)}$ must also be countable. Hence, $|D| \leq \aleph_0$, as the finite union of countable sets is countable. By hypothesis, $M \notin \Ass(T)$. So by Lemma \ref{heitmann93.2}, since $J$ is an ideal of $T$ which is contained in no single $P$ in $C$, then $J \nsubseteq \bigcup \{ r + P \mid P \in C, r \in D \}$.
    \par Now, assume $|R| > \aleph_0$. Then, by property (i) of PB-subrings, $|T/M| > |R|$. Since $R$ is infinite, the cardinality of the set of polynomials with coefficients in $R$ is equal to the cardinality of $R$. Thus, $|D_{(P)}|\leq |R|$ for $P \in \Spec(T)$, and, since $C$ is finite, $|D| \leq |R|$. Thus, $|C \times D| \leq |R| < |T/M|$. So, by Lemma \ref{heitmann93.3}, if $J$ is an ideal of $T$ which is contained in no single $P$ in $C$, then $J \nsubseteq \bigcup \{ r + P \mid P \in C, r \in D \}$.
\end{proof}

Proposition \ref{cpm} requires that for every finitely generated ideal $I$ of $A$, $IT \cap A = I$, a condition sometimes referred to as ``closing up ideals." 
The following lemma will allow us to ``close up" any ideal $I$ in a $PB$-subring. The statement of this lemma and its proof are very similar to those of Lemma 2.6 in \cite{pippa}. \\

\begin{lem}\label{Deb 3.5}
    Let $(T, M)$ be a complete local ring with $M \notin \Ass(T)$ and $R$ a PB-subring of $T$. Suppose $I$ is a finitely generated ideal of $R$ and $c \in IT \cap R$. Then there exists a PB-subring S of $T$ with $R \subset S \subset T$ and $c \in IS$.  Furthermore, $|S| \leq \sup\{|R|,\aleph_0\}$.
\end{lem}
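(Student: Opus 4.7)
The plan is to build $S$ by adjoining finitely many carefully chosen elements to $R$. Write $I = (a_1, \ldots, a_n)$ and $c = a_1 t_1 + \cdots + a_n t_n$ with $t_i \in T$. If every $a_i$ is a zerodivisor on $T$, then by prime avoidance $I \subseteq Q$ for some $Q \in \Ass(T)$, whence $c \in IT \cap R \subseteq Q \cap R = (0)$ and $S := R$ suffices. Otherwise, after reindexing I may assume $a_n$ is a nonzerodivisor on $T$. Exploiting the Koszul syzygy $a_n e_i - a_i e_n$, I parameterize candidate solutions by setting $x_i := t_i + a_n u_i$ for $i < n$ and letting $x_n$ be the unique element of $T$ satisfying $a_n x_n = c - \sum_{i<n} a_i x_i$; this makes the identity $c = \sum_{i=1}^{n} a_i x_i$ automatic, and $x_n$ exists because $a_n(t_n - \sum_{i<n} a_i u_i) = c - \sum_{i<n} a_i x_i$ and is unique because $a_n$ is a nonzerodivisor.

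The $u_i$'s will be chosen inductively so that at each stage $B_{i+1} := R[x_1, \ldots, x_i]_{(M \cap R[x_1, \ldots, x_i])}$ is a PB-subring and $\bar{x}_1, \ldots, \bar{x}_i$ are algebraically independent over $R$ in $T/Q$ for every $Q \in \Ass(T)$. At stage $i$, with $B_i$ already a PB-subring, I need $v \in a_n T$ such that $x_i := t_i + v$ maps to an element transcendental over $B_i$ in each $T/Q$. The forbidden cosets for $v$ are the shifts by $-t_i$ of those cosets of $Q$ containing an element of the algebraic closure of $B_i$ in $T/Q$, and a system of representatives $D$ for them has size at most $\sup\{|B_i|, \aleph_0\}$. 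Since $a_n$ is a nonzerodivisor, $a_n T$ lies in no $Q \in \Ass(T)$, so Lemma~\ref{heitmann93.2} (when $B_i$ is countable) or Lemma~\ref{heitmann93.3} (when $B_i$ is uncountable, in which case $|B_i| < |T/M|$ by the PB-subring cardinality axiom) yields such a $v$. A standard transcendence argument then shows $B_{i+1}$ is a PB-subring and extends algebraic independence from $\bar{x}_1, \ldots, \bar{x}_{i-1}$ to $\bar{x}_1, \ldots, \bar{x}_i$ over $R$ in $T/Q$.

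The hardest step will be verifying $S \cap Q = (0)$ after $x_n$ has been forced by the equation rather than chosen freely. Given $f \in R[X_1, \ldots, X_n]$ with $f(x_1, \ldots, x_n) \in Q$, I would multiply by a sufficient power $a_n^N$ and use the identity $a_n x_n = c - \sum_{i<n} a_i x_i$ (valid in $T$) to eliminate $x_n$, reducing to $a_n^N f(x_1, \ldots, x_n) = h(x_1, \ldots, x_{n-1})$ for some $h \in R[X_1, \ldots, X_{n-1}]$. Passing to $T/Q$, where $T/Q$ is a domain and $\bar{a}_n \neq 0$, the algebraic independence of $\bar{x}_1, \ldots, \bar{x}_{n-1}$ over $R$ forces $h = 0$ as a polynomial, so $a_n^N f(x_1, \ldots, x_n) = 0$ in $T$; since $a_n$ is a nonzerodivisor, this yields $f(x_1, \ldots, x_n) = 0$. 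The cardinality bound $|S| \leq \sup\{|R|, \aleph_0\}$ is immediate because $S$ is a localization of a ring obtained by adjoining finitely many elements to $R$, and together with the PB-subring axiom (i) for $R$ this gives axiom (i) for $S$.
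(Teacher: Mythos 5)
Your proof is doing genuine work here: the paper's own ``proof'' of this lemma is only a citation to Lemma 2.6 of \cite{pippa} together with the remark that adjoining finitely many elements preserves the cardinality condition, so there is no in-paper argument to compare line by line. Your self-contained argument is sound. The Koszul parameterization $x_i = t_i + a_n u_i$ with $x_n$ forced by $a_n x_n = c - \sum_{i<n} a_i x_i$, the inductive choice of the $u_i$ via Lemma \ref{heitmann93.2} in the countable case and Lemma \ref{heitmann93.3} in the uncountable case (where property (i) of PB-subrings indeed gives $|B_i| < |T/M|$, so the hypothesis $|C\times D| < |T/M|$ is met), and the multiplication by $a_n^N$ to eliminate the dependent variable $x_n$ when checking $S \cap Q = (0)$ are all correct; that last elimination step is exactly the right way to handle the fact that $x_n$ cannot be chosen transcendental. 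Structurally this is a one-shot version of the usual induction on the number of generators used in \cite{pippa}, which perturbs the $t_i$ two at a time; both routes rest on the same avoidance lemmas and the same elimination trick, and yours is arguably cleaner.

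One step is misstated, though easily repaired. From ``every $a_i$ is a zerodivisor on $T$'' you cannot deduce ``$I \subseteq Q$ for some $Q \in \Ass(T)$'' by prime avoidance: prime avoidance runs in the opposite direction, and an ideal whose generators are all zerodivisors need not consist of zerodivisors. What actually saves the degenerate case is that each $a_i$ lies in $R$ and $R \cap Q = (0)$ for every $Q \in \Ass(T)$, so any generator that is a zerodivisor is equal to $0$; hence ``every $a_i$ is a zerodivisor'' simply means $I = (0)$ and $c = 0$, and $S = R$ works. The same observation shows that in the remaining case every nonzero generator is automatically a nonzerodivisor, so nothing beyond choosing a nonzero $a_n$ is needed.
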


\begin{proof}
%    Let $S = R[u]_{(R[u]\cap M)}$ for $u \in T$. The proof that $S$ satisfies property (ii) of PB-subrings is identical to the proof of Lemma 2.6 in \cite{pippa}, so we do not include it here.
    The proof is identical to that of Lemma 2.6 in \cite{pippa}; in addition to the conclusions in that paper, we require that our cardinality condition, property (i) of PB-subrings, is satisfied. Since $S$ is obtained by adjoining finitely many elements to $R$, this condition, as well as the condition $|S| \leq \sup\{|R|,\aleph_0\}$ follows immediately.  
    %    \par Now we show that property (i) is satisfied, that is, the cardinality of $S$ is within the upper bound for a PB-subring. If $R$ is finite, then $R[u]$ must be countable, and if $R$ is infinite, we know that $|R| = |R[u]|$. Localizing at $R[u]\cap M$ will not make the cardinality exceed the upper bound for PB-subrings. So $S$ satisfies property (i) and is the desired PB-subring. 
\end{proof}

\begin{lem}\label{Deb 3.8}
    Let $(T, M)$ be a complete local ring with $M \notin \Ass(T)$. Let $R$ be a PB-subring of $T$ and $u \in T$. Then there exists a PB-subring $S$ of $T$ such that $R \subset S \subset T$, there is some $c \in S$ with $u - c \in M^2$, and $|S| = \sup\{|R|,\aleph_0\}$.
\end{lem}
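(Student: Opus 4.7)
The plan is to select $c$ of the form $u + m$ with $m \in M^2$ chosen so that adjoining $c$ to $R$ preserves the associated-prime avoidance property (ii) of PB-subrings, and then take $S$ to be $R[c]$ localized at its contraction of $M$. Let $C := \Ass(T)$ and let $D_{(P)}$, $D$ be as in Lemma \ref{Deb 3.6}; the shifted set $D^* := \{d - u : d \in D\}$ satisfies $|D^*| = |D|$, so the cardinality bookkeeping in the proof of Lemma \ref{Deb 3.6} applies to it verbatim. Since $M \notin \Ass(T)$, primality gives $M^2 \not\subseteq P$ for every $P \in C$, so Lemma \ref{heitmann93.2} (when $|R| \leq \aleph_0$) or Lemma \ref{heitmann93.3} (when $|R| > \aleph_0$, where PB condition (i) on $R$ forces $|R| < |T/M|$) yields $m \in M^2$ not of the form $(d - u) + p$ for any $P \in C$, $d \in D_{(P)}$, $p \in P$. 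Setting $c := u + m$ gives $u - c \in M^2$ together with the avoidance $c \notin d + P$ for every $P \in C$ and $d \in D_{(P)}$.

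Now define $S := R[c]_{R[c] \cap M}$. Elements of $R[c]$ outside $M$ are units in the local ring $T$, so this is a well-defined quasi-local subring of $T$ with maximal ideal $S \cap M$, containing $R$ and of cardinality $\sup\{|R|, \aleph_0\}$. PB condition (i) for $S$ follows from (i) for $R$: the bound $|S| \leq \sup\{\aleph_0, |T/M|\}$ is inherited by adjoining a single element, and in the uncountable-$|T/M|$ case the strict inequality $|R| < |T/M|$ carries over to $|S|$, so the equality clause only occurs when $T/M$ is countable.

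For PB condition (ii), suppose $s \in S \cap Q$ for some $Q \in C$ and write $s = f(c)/g(c)$ with $f, g \in R[x]$ and $g(c) \notin M$. Since $g(c)$ is a unit in $T$, $s \in Q$ forces $f(c) \in Q$. If $f \neq 0$ in $R[x]$, then using $R \cap Q = (0)$ so that $R$ embeds in $T/Q$, the image $\bar{c}$ would satisfy a nonzero polynomial over the image of $R$ in $T/Q$, hence be algebraic over $\bar{R}$; this would place $c$ in the same coset as some $d \in D_{(Q)}$ modulo $Q$, contradicting the choice of $c$. Thus $f = 0$, so $s = 0$, establishing $S \cap Q = (0)$.

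The main obstacle is the verification of condition (ii), which hinges on the transcendence property of $c$ together with interpreting $D_{(Q)}$ as providing a representative of every coset of $T/Q$ whose image is algebraic over $\bar{R}$. The shift-by-$u$ adaptation of Lemma \ref{Deb 3.6} is what delivers this property; fortunately the cardinality bounds needed for Lemmas \ref{heitmann93.2} and \ref{heitmann93.3} are insensitive to translation by $u$, so no new cardinality argument is required.
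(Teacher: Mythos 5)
Your proposal is correct and takes essentially the same route as the paper's proof: the paper also picks $x \in M^2$ avoiding the cosets $r+P$ (it simply builds the shift by $u$ into the definition of $D_{(P)}$ and invokes Lemma \ref{Deb 3.6} with $J=M^2$, where you shift $D$ by $-u$ and rerun the same cardinality bookkeeping), then sets $c = x+u$, $S = R[c]_{R[c]\cap M}$, and uses transcendence of $c$ modulo each associated prime to verify condition (ii). Your explicit handling of the localization (reducing $f(c)/g(c) \in Q$ to $f(c) \in Q$) is a small completeness improvement over the paper's wording but not a different argument.
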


\begin{proof}
    Let $C = \Ass(T)$. 
    Suppose $P \in C$. Define $D_{(P)}$ to be a full set of coset representatives $t + P$ that make $(u + t) + P$ algebraic over $R$ as an element of $T/P$. 
    Note that our hypotheses give us $M \nsubseteq P$ for every $P \in C$. 
    So, $M^2 \nsubseteq P$ for every $P \in C$. Let $D = \bigcup_{P \in C} D_{(P)}$. 
    Using Lemma \ref{Deb 3.6} with $J = M^2$, we may find an $x \in M^2$ such that $x \notin \bigcup \{ r + P \mid r \in D, P \in C \}$. 
    Consider $S = R[x + u]_{(R[x + u] \cap M)}$. 
    It is clear that $R \subset S$. 
    Moreover, all elements of $R[x + u]$ which are not contained in $M$ are units of $T$. 
    Thus, $S \subset T$.  Note that if $c = x + u$ then $u - c \in M^2$.  In addition, by definition, $|S|=\sup\{|R|,\aleph_0\}$, and so $S$ satisfies property (i) of PB-subrings.
  
    \par Let $a \in R[x + u] \cap P$ for some $P \in \Ass(T)$. 
    Since $a \in R[x + u]$, we may write
    \[
    a = r_0 + r_1(x + u) + r_2(x + u)^2 + \cdots + r_n(x + u)^n
    \]
    for $r_0, r_1, \ldots, r_n \in R$. 
    As $x \notin \bigcup \{ r + P \mid r \in D, P \in C \}$, $x + u + P$ is transcendental over $R$ and so $r_i \in P$ for all $i = 1, 2, \ldots, n$. 
    However, since $R$ is a PB-subring of $T$, $R \cap P = (0)$ so $R[x + u] \cap P = (0)$. 
    Thus $S\cap P = (0)$ and it follows that $S$ satisfies property (ii) of the definition of PB-subrings.
   % \par Now we show that property (i) of PB-subrings is satisfied, that is, the cardinality of $S$ is within the upper bound for a PB-subring. If $R$ is finite, then $R[x + u]$ must be countable, and if $R$ is infinite, then $|R| = |R[x + u]|$. 
    %Localizing at $R[x+u]\cap M$ will not make the cardinality exceed the upper bound for PB-subrings. 
    %So $S$ satisfies property (i) and is the desired PB-subring.
\end{proof}

\begin{lem}\label{Deb 3.9}
    Let $(T, M)$ be a complete local ring with $M \notin \Ass(T)$,  and let $\Omega$ be a well-ordered set such that $|\Omega| \leq \sup\{\aleph_0, |T/M|\}.$
    %and such that for each $\alpha \in \Omega$, $\alpha$ has strictly fewer than $|\Omega|$ predecessors. 
    Suppose $\{ R_{\alpha} \mid \alpha \in \Omega \}$ is an ascending collection of PB-subrings. Then $R = \bigcup_{\alpha \in \Omega} R_{\alpha}$ is a PB-subring except that it may be the case that $|R|=|T/M|$ when $T/M$ is uncountable. Furthermore, if there exists $\lambda$ such that $|R_{\alpha}| \leq \lambda$ for all $\alpha \in \Omega$, then $|R| \leq |\Omega|\lambda$ and if $T/M$ is countable or if $|\Omega|<|T/M|$ and $\lambda < |T/M|$ then $R$ is a PB-subring of $T$.
\end{lem}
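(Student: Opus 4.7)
My plan is to set $R = \bigcup_{\alpha \in \Omega} R_{\alpha}$ and verify the conditions of Definition \ref{pb-subring} one at a time, saving the cardinality condition for last since that is where the stated exception arises. Because the chain $\{R_{\alpha}\}$ is ascending, $R$ is automatically a subring of $T$. It is quasi-local with maximal ideal $R \cap M$: any element $r \in R \setminus M$ lies in some $R_{\alpha}$ outside $R_{\alpha} \cap M$, and is therefore a unit of $R_{\alpha}$, hence of $R$.

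For property (ii) of Definition \ref{pb-subring}, I would note that for every $Q \in \Ass(T)$,
\[ R \cap Q \;=\; \bigcup_{\alpha \in \Omega}(R_{\alpha} \cap Q) \;=\; (0), \]
since each $R_{\alpha}$ is already a PB-subring. This step is immediate and is not where the difficulty lies.

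The real content is the cardinality bound. Standard cardinal arithmetic gives
\[ |R| \;\leq\; |\Omega| \cdot \sup_{\alpha \in \Omega} |R_{\alpha}| \;\leq\; |\Omega| \cdot \sup\{\aleph_0, |T/M|\}. \]
If $T/M$ is countable, then $|\Omega| \leq \aleph_0$ and each $|R_{\alpha}| \leq \aleph_0$, so $|R| \leq \aleph_0 = \sup\{\aleph_0, |T/M|\}$; the ``equality implies $T/M$ countable'' clause of (i) is automatic, and $R$ is a PB-subring. If $T/M$ is uncountable, the same estimate gives only $|R| \leq |T/M|$, and should equality occur it would violate the strict form of (i); this is exactly the noted exception.

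For the \emph{Furthermore} clause, assume $|R_{\alpha}| \leq \lambda$ for all $\alpha$, so that $|R| \leq |\Omega| \cdot \lambda$. When $T/M$ is countable we have already shown $|R| \leq \aleph_0$. When $T/M$ is uncountable with $|\Omega|, \lambda < |T/M|$, infinite cardinal arithmetic (using $\kappa \cdot \mu = \max\{\kappa,\mu\}$ when at least one of $\kappa,\mu$ is infinite, and the trivial case when both are finite) yields $|\Omega| \cdot \lambda < |T/M|$, so (i) holds in its strict form and $R$ is a genuine PB-subring. The only nontrivial step throughout the lemma is this bookkeeping of cardinal cases; the algebraic content — subring, quasi-local, trivial intersection with each associated prime — is routine union-of-chain material.
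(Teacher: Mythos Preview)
Your proposal is correct and follows essentially the same approach as the paper: verify that the union is a quasi-local subring, check property (ii) via $R\cap Q=\bigcup_\alpha (R_\alpha\cap Q)=(0)$, and handle the cardinality condition by the bound $|R|\le |\Omega|\cdot\sup_\alpha|R_\alpha|$ together with a case split on whether $T/M$ is countable. You are in fact a bit more explicit than the paper about why $R$ is quasi-local, but otherwise the arguments are identical.
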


\begin{proof}
    Note that $R$ is a quasi-local subring of $T$ with maximal ideal $R \cap M$.
    \par Now let $r \in R \cap Q$ where $Q \in \Ass(T)$. Then $r \in R$ which implies that $r \in R_{\alpha}$ for some $\alpha \in \Omega$. Since $R_{\alpha}$ is a PB-subring, we have $R_{\alpha} \cap Q = (0)$. Hence $r = 0$, so $R \cap Q = (0)$ and $R$ satisfies property (ii) of PB-subrings.
    \par Now we prove that $|R| \leq \sup\{\aleph_0, |T/M|\}$. Since $R = \bigcup_{\alpha \in \Omega} R_{\alpha}$,
    \[
    |R| \leq \Sigma_{\alpha \in \Omega}|R_{\alpha}| \leq |\Omega| \sup_{\alpha \in \Omega}|R_{\alpha}| \leq [\sup\{\aleph_0, |T/M|\}]^2 = \sup\{\aleph_0, |T/M|\}.
    \]
    \par Note that if $T/M$ is countable, then the inequality above implies that $R$ is a PB-subring. Now we assume that there exists $\lambda$ such that $|R_{\alpha}| \leq \lambda$ for all $\alpha \in \Omega$. Then:
    \[
    |R| \leq \Sigma_{\alpha \in \Omega}|R_{\alpha}| \leq |\Omega| \lambda
    \]
    Thus if $|\Omega|<|T/M|$ and $|\lambda|<|T/M|$, then:
    \[
    |R| \leq |\Omega| \lambda < [\sup\{\aleph_0, |T/M|\}]^2 = \sup\{\aleph_0, |T/M|\}.
    \]
    so property (i) of PB-subrings is satisfied, and $R$ is a PB-subring of $T$.
    \end{proof}
    
\begin{dfn}\label{Deb 3.10}
    Let $\Omega$ be a well-ordered set and let $\alpha \in \Omega$. We denote 
    $$\gamma (\alpha) = \sup\{ \beta \in \Omega \mid \beta < \alpha \}.$$
\end{dfn}

\begin{lem}\label{Deb 3.11}
    Let $(T, M)$ be a complete local ring with $M \notin \Ass(T)$, and let $\bar{t} \in T/M^2$. Let $R$ be a PB-subring of $T$. Then there exists a PB-subring $S$ of $T$ such that the following conditions hold:
    \begin{enumerate}[label={(\roman*)}]
      \item $R \subset S \subset T$ with $|S| = \sup\{|R|,\aleph_0\}$
       \item $\bar{t} \in \textup{Im}(S \rightarrow T/M^2)$
       \item For every finitely generated ideal I of S, we have $IT \cap S = I$.
    \end{enumerate}
\end{lem}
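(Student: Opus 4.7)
The plan is to build $S$ in three stages: first, adjoin an element to $R$ so that $\bar{t}$ lies in the image of the map to $T/M^2$; second, iteratively ``close up'' finitely generated ideals using Lemma \ref{Deb 3.5} via a transfinite construction; and third, pass to a countable union. Set $\lambda := \sup\{|R|, \aleph_0\}$ throughout. The output of the construction will be the ring $S$.

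\textbf{Stage 1.} Pick any $u \in T$ with $u + M^2 = \bar{t}$ and apply Lemma \ref{Deb 3.8} to $R$ and $u$ to get a PB-subring $R_0 \supset R$ containing some $c_0$ with $u - c_0 \in M^2$. Then $c_0 + M^2 = \bar{t}$, securing (ii) already at the level of $R_0$, and $|R_0| = \lambda$.

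\textbf{Stage 2.} Inductively build an ascending chain $R_0 \subset R_1 \subset R_2 \subset \cdots$ of PB-subrings of $T$, each of cardinality at most $\lambda$, such that for every finitely generated ideal $I$ of $R_n$ and every $c \in IT \cap R_n$ we have $c \in I R_{n+1}$. Given $R_n$, form the set $\Omega_n$ of all pairs $(I, c)$ with $I$ a finitely generated ideal of $R_n$ and $c \in IT \cap R_n$; since $\lambda$ is infinite, $|\Omega_n| \leq \lambda \cdot \lambda = \lambda$. Well-order $\Omega_n$, and build an ascending chain of PB-subrings inside $T$ by applying Lemma \ref{Deb 3.5} at each successor ordinal to handle the next pair (noting that an ideal $I$ of $R_n$ extends to a finitely generated ideal in each larger ring, with the same extension to $T$) and Lemma \ref{Deb 3.9} at each limit ordinal. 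Let $R_{n+1}$ be the ring obtained at the end of this transfinite process.

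\textbf{Stage 3.} Set $S := \bigcup_{n < \omega} R_n$. By Lemma \ref{Deb 3.9} applied to this countable ascending chain of PB-subrings (with bound $\lambda$ on cardinalities), $S$ is a PB-subring of $T$ with $|S| \leq \lambda$, and since $R_0 \subset S$, in fact $|S| = \lambda$; property (ii) is inherited. To verify (iii), suppose $I = (x_1, \ldots, x_m) S$ and $c \in IT \cap S$. Finitely many elements are involved, so there is an $n$ with $x_1, \ldots, x_m, c \in R_n$. Setting $I_n := (x_1, \ldots, x_m) R_n$, we have $I_n T = IT$, hence $c \in I_n T \cap R_n$; by construction, $c \in I_n R_{n+1} \subset IS = I$, as required.

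\textbf{Main obstacle.} The technical subtlety is tracking cardinalities throughout the iterated transfinite construction so that Lemma \ref{Deb 3.9} always produces a PB-subring. The key observation is that condition (i) of the PB-subring definition forces $|R| < |T/M|$ whenever $T/M$ is uncountable (since equality in (i) is ruled out in that case), so $\lambda < |T/M|$, and then every indexing set $\Omega_n$ has cardinality at most $\lambda < |T/M|$, meeting the hypothesis of Lemma \ref{Deb 3.9}; when $T/M$ is countable, Lemma \ref{Deb 3.9} applies unconditionally. Everything else (that Lemma \ref{Deb 3.5} can be iterated, that finite generation is preserved under the chain, that only countably many rounds of closing up suffice) follows from the fact that each step only adjoins finitely many elements.
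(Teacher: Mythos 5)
Your proposal is correct and follows essentially the same route as the paper: apply Lemma \ref{Deb 3.8} first to secure condition (ii), then run a transfinite closing-up process over the pairs $(I,c)$ using Lemmas \ref{Deb 3.5} and \ref{Deb 3.9}, iterate countably many times, and take the union. Your cardinality bookkeeping (in particular the observation that property (i) of PB-subrings forces $\lambda < |T/M|$ when $T/M$ is uncountable, so Lemma \ref{Deb 3.9} applies at every stage) matches the paper's argument.
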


\begin{proof}
    First, use Lemma \ref{Deb 3.8} to find a PB-subring $R_0$ such that $R \subset R_0 \subset T$, $\bar{t} \in \text{Im}(R_0 \rightarrow T/M^2)$, and $|R_0|=\sup\{|R|,\aleph_0\}$. We will construct $S$ to contain $R_0$, so $R \subset S \subset T$ and condition (ii) will follow automatically. Let
    $$\Omega = \{ (I,c) \mid I \text{ a finitely generated ideal of } R_0 \text{ and } c \in IT \cap R_0\}.$$
    %and consider separately the cases in which $R_0$ is infinite and $R_0$ is finite. First, assume $R_0$ is infinite. 
    If $c \in R_0$ then $(R_0,c) \in \Omega$. Thus, $|R_0| \leq |\Omega|$. Since the cardinality of the set of finite subsets of $R_0$ is $|R_0|$, $|\Omega| \leq |R_0|$ and thus $|R_0| = |\Omega|$. Note that either $T/M$ is countable or $|R_0|<|T/M|$.
    \par Well-order $\Omega$ so that it does not have a maximal element and let 0 denote its initial element. Now, we recursively define a family of PB-subrings, beginning with $R_0$. If $\gamma (\alpha) \neq \alpha$ and $\gamma (\alpha) = (I, c)$, then choose $R_{\alpha}$ to be the PB-subring extension of $R_{\gamma (\alpha )}$ obtained from Lemma \ref{Deb 3.5}, so that $c \in IR_{\alpha}$ and $|R_{\alpha}| \leq \sup \{|R_{\gamma (\alpha)}|,\aleph_0\}$. 
    If $\gamma (\alpha ) = \alpha$, choose $R_{\alpha} = \bigcup_{\beta < \alpha} R_{\beta}$, noting that $R_{\alpha}$ is a PB-subring with $|R_{\alpha}| \leq |\Omega||R_0| = |R_0|$. Set $S_1 = \bigcup R_{\alpha}$. By Lemma \ref{Deb 3.9}, $S_1$ is a PB-subring of $T$ and $|S_1| \leq |\Omega||R_0| = |R_0|$.  Hence, $|S_1| = |R_0|$.
 %   \par If $R_0$ is finite, then $\Omega$ is also finite and $\gamma(\alpha) \ne \alpha$ for every $\alpha \in \Omega$. Well-order $\Omega$ and let 0 denote its initial element and let $N$ denote its maximal element. 
    %We define a family of PB-subrings starting with $R_0$. Since $\gamma (\alpha ) \neq \alpha$ and $\gamma (\alpha ) = (I,c)$ choose $R_{\alpha}$ to be the PB-subring extension of $R_{\gamma(\alpha)}$ obtained from Lemma \ref{Deb 3.5}, so that $c \in IR_{\alpha}$.  Now let $N = (I,c)$ and define $S_1$ to be the PB-subring extension of $R_N$ obtained from Lemma \ref{Deb 3.5} so that $c \in IR_N$.
   % Note that $|S_1|\leq \sup \{|R_0|,\aleph_0\}$. 
   % \par 
    %Thus, regardless of the cardinality of $R_0$, 
   By our construction, if $I$ is a finitely generated ideal of $R_0$ and $c \in IT \cap R_0$, then, for some $\alpha \in \Omega$, we have $c \in IR_{\alpha} \subseteq IS_1$ and hence $IT \cap R_0 \subseteq IS_1$. 
   %Also $|S_1| \leq \sup \{ |\Omega||R_0|, \aleph_0 \} = |\Omega||R_0|$.
    \par We repeat this process to obtain a PB-subring extension $S_2$ of $S_1$ such that $IT \cap S_1 \subseteq IS_2$ for every finitely generated ideal $I$ of $S_1$ and $|S_2| = |R_0|$, and continuing further results in an ascending chain $R_0 \subset S_1 \subset \ldots $ such that $|S_n| = |R_0|$ for every $n$, and $IT \cap S_n \subseteq IS_{n + 1}$ for every finitely generated ideal $I$ of $R_n$. 
    Then by Lemma \ref{Deb 3.9}, $S = \bigcup S_i$ satisfies $|S| = |R_0| = \sup\{|R|,\aleph_0\}$ and $S$ is a PB-subring because either $T/M$ is countable or $|R_0| < |T/M|$.
    If $I$ is a finitely generated ideal of $S$, then some $S_n$ contains a generating set for $I$, say $y_1, \ldots, y_k$. If $c \in IT \cap S$, then $c \in S_m$ for some $m \geq n$. So $c \in  (y_1, \ldots, y_k)T \cap S_m$, and therefore $c \in (y_1, \ldots, y_k)S_{m + 1} \subseteq I$. Thus $IT \cap S = I$, so condition (iii) holds.
\end{proof}

The following lemma is a well-known result, but we include the proof for completeness. \\

\begin{lem} \label{residue field powers}
    Let $(A,M)$ be a local ring. If $A/M$ is finite, then $A/M^n$ is finite for all $n$. If $A/M$ is infinite, then $|A/M^n| = |A/M|$ for all $n$.
\end{lem}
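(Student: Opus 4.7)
The plan is to induct on $n$, using the short exact sequence
\[ 0 \longrightarrow M^{n-1}/M^{n} \longrightarrow A/M^{n} \longrightarrow A/M^{n-1} \longrightarrow 0 \]
of $A$-modules. The base case $n=1$ is trivial. For the inductive step, the key observation is that $M^{n-1}/M^{n}$ is annihilated by $M$, hence it is a module over the field $A/M$, i.e., a vector space. Since the paper's convention is that ``local'' means Noetherian, $M$ (and therefore $M^{n-1}$) is a finitely generated ideal, so $M^{n-1}/M^{n}$ is a finite-dimensional vector space over $A/M$.

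From here the two cases follow by cardinal arithmetic. If $A/M$ is finite, then the finite-dimensional $(A/M)$-vector space $M^{n-1}/M^{n}$ is a finite set, and combined with the inductive hypothesis that $A/M^{n-1}$ is finite, the short exact sequence forces $A/M^{n}$ to be finite. If $A/M$ is infinite, say of dimension $d_n < \infty$ as an $(A/M)$-vector space, then $|M^{n-1}/M^{n}| = |A/M|^{d_n} = |A/M|$, using that a finite Cartesian power of an infinite set has the same cardinality as the set. The short exact sequence gives $|A/M^{n}| \leq |A/M^{n-1}| \cdot |M^{n-1}/M^{n}|$, which by induction is at most $|A/M| \cdot |A/M| = |A/M|$; the reverse inequality $|A/M^{n}| \geq |A/M|$ is immediate from the surjection $A/M^n \twoheadrightarrow A/M$.

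There is essentially no obstacle here beyond being careful with cardinal arithmetic and invoking Noetherianness to ensure $M^{n-1}/M^{n}$ is finite-dimensional over the residue field. The argument is standard and purely module-theoretic; none of the more delicate machinery (PB-subrings, transcendence lemmas, etc.) developed earlier in the paper is needed.
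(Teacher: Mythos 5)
Your proof is correct and is essentially the argument the paper gives: both decompose $A/M^n$ along the $M$-adic filtration, observe that each quotient $M^{i-1}/M^i$ is a finite-dimensional vector space over $A/M$ by Noetherianness, and conclude by cardinal arithmetic. The paper telescopes the filtration in one inequality where you phrase it as an induction, but this is a cosmetic difference.
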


\begin{proof}
    We have that
    \[
    |A/M^n| \le |A/M|\cdot|M/M^2| \cdots |M^{n-1}/M^n| = |A/M|\cdot |A/M|^{a_2} \cdots |A/M|^{a_n}
    \]
    Where $a_i$ is the dimension of $M^{i-1}/M^i$ as a vector space over $A/M$. Since $A$ is Noetherian, each $a_i$ is finite and so the result follows.
\end{proof}

% \begin{proof}
%     Note that for any $i$, $M^{i - 1}/M^i$ is a finitely generated module over $T$ such that the action of $M$ is zero, and hence $M^{i - 1}/M^i$ is a finitely generated vector space over $T/M$. 
%     Call its dimension $d_i$. Then we have $|M^{i - 1}/M^i| = |T/M|^{d_i}$.
%     We can define a map of sets
%     \[
%     T/M^n \to T/M \times M/M^2 \times \ldots \times M^{n-1}/M^n
%     \]
%     where the image of $t + M^n$ in the component $M^i/M^{i+1}$ is $t + M^{i+1}$ if $t \in M^{i}$, and $0 + M^{i+1}$ otherwise. 
%     We claim this map is an injection. Suppose $t_1 + M^n \neq t_2 + M^n$. Then $t_1 - t_2 \not \in M^n$. Let $i$ be the largest integer such that $t_1 - t_2 \in M^i$, and note $0 \le i \le n-1$. Suppose that $t_1$ and $t_2$ have the same image in the component $M^i/M^{i+1}$. Then $t_1 + M^{i+1} = t_2 + M^{i+1}$, which is a contradiction to the maximality of $i$. Hence $t_1 + M^n$ and $t_2 + M^n$ have different images, so the map is an injection. Finally, note that this injection yields the inequality
%     \[
%     |T/M^n| \le |T/M|^{d_1} |T/M|^{d_2} \cdots |T/M|^{d_{n}}
%     \]
%     if $|T/M|$ is finite, this means $|T/M^n|$ is finite. If $|T/M|$ is infinite, then this yields that $|T/M^n| = |T/M|$.
% \end{proof}

We are now equipped to show our main result of this section. \\

\begin{thm} \label{general domain construction}
Let $(T, M)$ be a complete local ring such that
    \begin{enumerate}[label={(\roman*)}]
    \item No integer of $T$ is a zerodivisor
    \item Unless equal to (0), $M \notin \Ass(T)$.
    \end{enumerate}
If $T/M$ is infinite, then $T$ is the completion of a local domain $A$ such that $|A| = |T/M|$. If $T/M$ is finite, then $T$ is the completion of a countable domain.
\end{thm}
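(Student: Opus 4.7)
My approach is to construct $A$ as the direct limit of a transfinite tower of PB-subrings of $T$, engineered so the two hypotheses of Proposition \ref{cpm} hold in the limit. If $M = (0)$, then $T$ is already a field (hence a domain) with $|T| = |T/M|$, so the conclusion is immediate; I therefore assume $M \notin \Ass(T)$. Start with $R_0$, the prime subring of $T$ localized at its intersection with $M$. Hypothesis (i), together with the standard fact that the zerodivisors of $T$ form the union $\bigcup_{Q \in \Ass(T)} Q$, forces $R_0 \cap Q = (0)$ for every $Q \in \Ass(T)$, so $R_0$ is a countable PB-subring.

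Let $\kappa = \sup\{\aleph_0, |T/M|\}$, so $|T/M^2| \leq \kappa$ by Lemma \ref{residue field powers}. Well-order $T/M^2$ as $\{\bar{t}_\beta\}_{\beta < \nu}$ for an ordinal $\nu$ of cardinality $|T/M^2|$. Recursively define $\{R_\alpha\}_{\alpha \leq \nu}$: at a successor $\alpha = \beta + 1$, apply Lemma \ref{Deb 3.11} to $R_\beta$ with target $\bar{t}_\beta$ to obtain a PB-subring $R_\alpha \supset R_\beta$ realizing $\bar{t}_\beta$ in the image to $T/M^2$, satisfying $IT \cap R_\alpha = I$ for every finitely generated $I \subseteq R_\alpha$, and with $|R_\alpha| = \sup\{|R_\beta|, \aleph_0\}$. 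At a limit $\alpha < \nu$, set $R_\alpha = \bigcup_{\beta < \alpha} R_\beta$ and apply Lemma \ref{Deb 3.9} to confirm it is a PB-subring; the induction $|R_\alpha| \leq \max\{|\alpha|, \aleph_0\}$ keeps the strict bound $|R_\alpha| < |T/M|$ needed at all $\alpha < \nu$ in the uncountable case. Set $A = R_\nu$.

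The two hypotheses of Proposition \ref{cpm} now follow. Surjectivity of $A \to T/M^2$ is immediate since each coset $\bar{t}_\beta$ was realized at stage $\beta+1$. For ideal closure, given a finitely generated $I \subseteq A$ with generators in some $R_\beta$ and $c \in IT \cap A$ lying in some $R_\gamma$ with $\gamma \geq \beta$, property (iii) from Lemma \ref{Deb 3.11} applied to $R_\gamma$ yields $c \in IR_\gamma \subseteq I$. Hence $A$ is Noetherian with $\widehat{A} = T$. For the domain property, pick a minimal $Q \in \Ass(T)$: the relation $A \cap Q = (0)$ embeds $A$ into the domain $T/Q$. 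For cardinality, $|A| \leq \kappa$ is immediate from the recursion, and the surjection $A \to T/M$ forces $|A| \geq |T/M|$; when $T/M$ is infinite this pins $|A| = |T/M|$, and when $T/M$ is finite even a single application of Lemma \ref{Deb 3.11} forces $|A| = \aleph_0$.

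The main obstacle I anticipate is cardinality bookkeeping at limit stages when $T/M$ is uncountable: Lemma \ref{Deb 3.9} only yields the PB-subring property at a limit when both the index set and the cardinality bound $\lambda$ are strictly below $|T/M|$, so the inductive bound $|R_\alpha| \leq \max\{|\alpha|, \aleph_0\} < |T/M|$ for $\alpha < \nu$ must be carefully maintained through both successor and limit stages of the recursion.
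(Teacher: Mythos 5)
Your proposal is correct and follows essentially the same route as the paper: a transfinite chain of PB-subrings indexed by $T/M^2$, built with Lemma \ref{Deb 3.11} at successors and Lemma \ref{Deb 3.9} at limits, then Proposition \ref{cpm} plus the residue-field bound to pin down $|A|$. The only points to tighten are that $\nu$ should be the initial ordinal of its cardinality (so that $|\alpha|<|T/M|$ for all $\alpha<\nu$, exactly the paper's "strictly fewer than $|\Omega|$ predecessors" condition) and that in the ideal-closure step one should place $c$ and the generators in a ring $R_\gamma$ with $\gamma$ a successor, since only those stages are guaranteed to satisfy property (iii).
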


\begin{proof}
\par If $M = (0)$, then $|T| = |T/M|$ and $T$ is a field, so $T$ is a domain precompletion of itself  and the result follows immediately.
\par Now assume that $M \neq (0)$ so that, by condition (ii), we have $M \notin \Ass(T)$. Note that by Theorem \ref{residue field powers}, if $T/M$ is infinite, then $|T/M| = |T/M^2|$, and $T/M^2$ is finite otherwise. Let $\Omega = T/M^2$, and well-order $\Omega$ such that every element has strictly fewer than $|\Omega|$ predecessors. Define $\mathcal{B}$ to be an indexing set for $\Omega$, and denote $0$ as the least element of $\mathcal{B}$. 
We define a family of PB-subrings indexed by $\mathcal{B}$ as follows: let $R_0'$ be the prime subring of $T$ and $R_0$ be $R_0'$ localized at $R_0' \cap M$. Note that, by condition (i), $R_0$ is a PB-subring of $T$. For each $R_\lambda$, if $\gamma(\lambda) < \lambda$, use Lemma \ref{Deb 3.11} to construct $R_\lambda$ as a PB-subring extension of $R_{\gamma(\lambda)}$ such that 
    \begin{enumerate}[label={(\roman*)}]
      \item $R_{\gamma(\lambda)} \subset R_\lambda \subset T$ with $|R_{\lambda}| = \sup\{|R_{\gamma{(\lambda)}}|,\aleph_0\}$
       \item $\bar{t}_{\gamma(\lambda)} \in \Omega$ is in $\text{Im}(R_\lambda \rightarrow T/M^2)$
       \item For every finitely generated ideal $I$ of $R_\lambda$, we have $IT \cap R_\lambda = I.$
    \end{enumerate}
    If $\gamma(\lambda) = \lambda$, then let $R_\lambda = \bigcup_{\beta < \lambda} R_\beta$, and note that $R_\lambda$ is a PB-subring of $T$ by Lemma \ref{Deb 3.9}.
    Then 
    $$
    A = \bigcup_{\lambda \in \mathcal{B}} R_\lambda
    $$
    is the desired domain. Note that, by Lemma \ref{Deb 3.9}, $A$ is a PB-subring of $T$ except that we could have $|A| = |T/M|$ when $T/M$ is uncountable.  
    %It follows that $|A| \leq \sup\{|T/M|, \aleph_0\}$.  
    In particular, if $T/M$ is infinite, then $|A| \leq |T/M|$.
    Also note that each PB-subring contains no zerodivisors of $T$, so $A$ contains no zerodivisors of $T$.
    \par We now use Proposition \ref{cpm} to show $A$ is Noetherian and $\widehat{A} = T$. By construction, $A \rightarrow T/M^2$ is surjective. Next, let $I = (a_1, \ldots, a_n)A$ be a finitely generated ideal of $A$ and $c \in IT \cap A$. Then, for some $\lambda \in \mathcal{B}$ such that $\lambda$ has a predecessor, we have $\{ c, a_1, \ldots, a_n \} \subseteq R_\lambda$. 
    In particular, this yields $c \in (a_1,\ldots,a_n)T \cap R_{\lambda} = (a_1,\ldots,a_n)R_\lambda \subseteq I$. Hence, $IT \cap A = I$ for all finitely generated ideals $I$ of $A$. 
    It follows by Proposition \ref{cpm} that $A$ is Noetherian and $\widehat{A} = T$. Therefore, if $T/M$ is infinite, the isomorphism $A/(M \cap A) \cong T/M$ implies that $|T/M| = |A/(M \cap A)| \leq |A| \leq |T/M|$, so $|A|=|T/M|$.  If $T/M$ is finite, then $|A| \leq \sup\{|T/M|, \aleph_0\}$ implies that $A$ is countable.
\end{proof}

\begin{cor} \label{small domain thm}
    Let $(T,M)$ be a complete local ring. Then $T$ is the completion of a local domain $A$ such that $|A| < |T|$ if and only if the following three conditions hold.
    \begin{enumerate}[label={(\roman*)}]
    \item No integer of $T$  is a zerodivisor
    \item $M \not \in \textup{Ass} \, (T)$
    \item $|T/M| < |T|$
    \end{enumerate}
\end{cor}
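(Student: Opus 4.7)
Plan: The corollary follows from combining Theorem~\ref{Lech} and Theorem~\ref{general domain construction} with a short cardinality argument. I will handle necessity and sufficiency separately.

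For necessity, suppose such an $A$ exists. Conditions (i) and (ii) follow immediately from Theorem~\ref{Lech}, since $A$ is a local domain with $\widehat{A}=T$. For (iii), let $\mathfrak m$ be the maximal ideal of $A$. Since $\widehat{A}=T$ we have $\mathfrak m = A\cap M$ and the natural map induces an isomorphism $A/\mathfrak m \cong T/M$. Therefore
\[
|T/M| \;=\; |A/\mathfrak m| \;\le\; |A| \;<\; |T|,
\]
giving (iii).

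For sufficiency, conditions (i) and (ii) let me apply Theorem~\ref{general domain construction} to obtain a local domain $A$ with $\widehat{A}=T$, where $|A|=|T/M|$ if $T/M$ is infinite and $A$ is countable if $T/M$ is finite. If $T/M$ is infinite, then (iii) gives $|A|=|T/M|<|T|$ directly. If $T/M$ is finite, I must verify that $|T|>\aleph_0$ so that the countable $A$ satisfies $|A|<|T|$. By Lemma~\ref{residue field powers}, $T/M^n$ is finite for every $n$, and by completeness $T\cong\varprojlim_n T/M^n$. Condition (iii) combined with $|T/M|$ finite forces $T$ to be infinite, and the Krull intersection theorem gives $\bigcap_n M^n=(0)$ with $M^n\neq M^{n+1}$ for all $n$ (else Nakayama would make $T=T/M^n$ finite).

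The one nontrivial step will be showing that such a $T$ is in fact uncountable. I plan a direct Cantor-style diagonalization: assuming $T=\{t_0,t_1,\ldots\}$ is countable, I construct a Cauchy sequence $x_0,x_1,\ldots\in T$ with $x_{n+1}-x_n\in M^n$ and $x_n-t_n\notin M^n$, using at each stage that $|T/M^n|\geq 2$ so that some coset of $M^n$ avoids $t_n$. The limit $x\in T$ then satisfies $x-t_n\notin M^n$ for every $n$, so $x\neq t_n$ for all $n$, contradicting the enumeration. Thus $|T|\ge 2^{\aleph_0}>\aleph_0\ge |A|$, completing the case. The uncountability argument is the only part with content beyond invoking prior results; everything else is a direct bookkeeping of cardinals.
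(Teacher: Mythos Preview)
Your proof follows the same structure as the paper's: derive (iii) from $|T/M| \le |A| < |T|$, invoke Lech's theorem for (i) and (ii), and use Theorem~\ref{general domain construction} for sufficiency. The only substantive difference is that where the paper cites an external lemma (Lemma~2.2 of \cite{dundon}) to conclude $T$ is uncountable when $T/M$ is finite, you supply a self-contained Cantor diagonalization, which works fine once the indexing is cleaned up.

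Two small points to tighten. First, in necessity, Theorem~\ref{Lech} only yields ``$M \notin \textup{Ass}(T)$ \emph{unless} $M=(0)$,'' so to get the corollary's unqualified (ii) you must rule out $M=(0)$; this follows immediately from (iii) (or directly from $|A|<|T|$, since $M=(0)$ forces $A=T$). The paper handles this by proving (iii) first and then deducing $M\ne(0)$. Second, your claim that ``(iii) combined with $|T/M|$ finite forces $T$ to be infinite'' does not follow as stated: (iii) only gives $|T|>|T/M|$, which could still be finite. What actually does the work is condition (ii), which forces $\dim T \ge 1$ (in a zero-dimensional local ring the maximal ideal is always an associated prime), hence $M^n \ne M^{n+1}$ for all $n$; with that in hand your Nakayama argument and diagonalization go through.
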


\begin{proof}
%    \par Observe that if condition (i) or condition (ii) fail, then $T$ is not the completion of a domain by Theorem \ref{Lech}. Now suppose that $|T/M| \not < |T|$; then $|T/M| = |T|$. Let $A$ be a local domain with $\widehat{A} = T$. Since $\frac{A}{A \cap M} \cong T/M$, $|A| \ge |T/M| = |T|$. The other direction follows from the Theorem \ref{general domain construction}.
    \par Suppose that $T$ is the completion of a local domain $A$ with $|A| < |T|$. 
    Since $A/(A \cap M) \cong T/M$, we have that $|T/M| \le |A| < |T|$ and so (iii) follows.  As a result, $M$ cannot be the zero ideal, and so by Theorem \ref{Lech}, (i) and (ii) must both be true. 
        \par Now suppose that conditions (i), (ii), and (iii) hold.  By (iii), $M$ cannot be the zero ideal.  If $T/M$ is infinite, the result follows from Theorem  \ref{general domain construction}.
         Since $M \not \in \textup{Ass} \, (T)$, the dimension of $T$ must be at least one, and so, by Lemma 2.2 in \cite{dundon}, $T$ is uncountable. 
         %and $|T/M|$ is finite, $|T|$ is uncountable, as it contains all elements of the form
 %   \[
 %   \sum_{i = 0}^\infty e_ix^i
 %   \]
   % Where $x \in M$ and each $e_i$ is either $1$ or $0$. 
    If $T/M$ is finite, $T$ is the completion of a countable domain $A$ by Theorem \ref{general domain construction}.  As $T$ is uncountable and $A$ is countable, we have $|A| < |T|$.
    %the countable precompletion constructed in Theorem \ref{general domain construction} suffices for the reverse direction.
    %\par In the upside down corkscrew followed by a triple flip direction, it follows from Theorem \ref{general domain construction}.
\end{proof}

\begin{cor} \label{countable domain thm}
    Let $(T,M)$ be a complete local ring. Then $T$ is the completion of a countable local domain if and only if the following three conditions hold.
    \begin{enumerate}[label={(\roman*)}]
        \item No integer of $T$ is a zerodivisor
        \item Unless equal to $(0)$, $M \not \in \textup{Ass} \, (T)$
        \item $T/M$ is countable.
    \end{enumerate}
\end{cor}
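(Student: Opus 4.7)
The plan is to deduce the corollary directly from Theorem \ref{Lech} and Theorem \ref{general domain construction}. The statement parallels Corollary \ref{small domain thm}, with the cardinality bound $|A| < |T|$ replaced by $|A| \leq \aleph_0$, so the proof should follow the same template.

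For the forward direction, I would assume $A$ is a countable local domain with $\widehat{A} = T$. Since $A$ is a local domain precompletion of $T$, Theorem \ref{Lech} immediately yields conditions (i) and (ii). For condition (iii), I would use the standard isomorphism $A/({\bf m} \cap A) \cong T/M$ (where ${\bf m}$ is the maximal ideal of $A$), giving $|T/M| = |A/(A \cap {\bf m})| \leq |A| \leq \aleph_0$, which is exactly (iii).

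For the backward direction, assume (i), (ii), and (iii) hold. If $M = (0)$, then by (iii) $T$ is a countable field and is its own domain precompletion. Otherwise, (ii) says $M \notin \Ass(T)$, so the hypotheses of Theorem \ref{general domain construction} are met. I would then split into two subcases according to how that theorem delivers its output: if $T/M$ is finite, Theorem \ref{general domain construction} directly produces a countable domain precompletion; if $T/M$ is countably infinite, the same theorem produces a precompletion $A$ with $|A| = |T/M| = \aleph_0$, which is again countable.

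The main obstacle here is essentially nil, since all of the heavy construction machinery has already been deployed in Theorem \ref{general domain construction}; this corollary is really a packaging statement. The only point to handle carefully is the $M = (0)$ edge case in the ``unless equal to $(0)$'' clause of (ii), which is immediate since a countable field is trivially its own completion and its own countable domain precompletion.
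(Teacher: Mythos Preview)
Your proposal is correct and follows essentially the same approach as the paper: both directions invoke Theorem~\ref{Lech} and Theorem~\ref{general domain construction} in the way you describe, with the residue-field isomorphism supplying condition~(iii). Your explicit handling of the $M=(0)$ edge case and the finite/infinite split for $T/M$ is slightly more detailed than the paper's one-line appeal to Theorem~\ref{general domain construction} (which already absorbs both of those cases), but the substance is identical.
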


\begin{proof}
    \par Suppose $T$ is the completion of a countable local domain $A$.  Conditions (i) and (ii) follow from Theorem \ref{Lech}, and condition (iii) follows since $|T/M| = |A/(A \cap M)| \leq |A|$.  If conditions (i), (ii), and (iii) hold, then $T$ is the completion of a countable local domain by Theorem \ref{general domain construction}.
\end{proof}
The following example justifies the generality of the statement of Theorem \ref{general domain construction} and Corollary \ref{small domain thm}, showing that there exist cases outside of the countable/uncountable divide. 
\begin{ex}
    %Theorem \ref{general domain construction} does not apply exclusively to countable and uncountable cases. Consider the following example.
    Let $k$ be a field with cardinality $\omega_\omega$, where $\omega$ here is used in the ordinary sense of the ordinal numbers ($\omega$ being the order type of the natural numbers, $\omega_1$ the smallest uncountable ordinal, etc.). Then, by K\"onig's Theorem we have that $|k|^{\aleph_0} > |k|$. 
    Let $T = k[[x,y,z]]/(x^2 - y^2)$, which has cardinality $|k|^{\aleph_0}$. By Theorem \ref{general domain construction}, there is a domain $R$ such that $|R| < |T|$ and $\widehat{R} = T$. 
\end{ex}

\section{Cardinalities of Spectra of Domain Precompletions}
In this section, we investigate the cardinality of the spectra of domain precompletions of a given complete local ring. In particular, we prove that every complete local ring with dimension at least two and satisfying the conditions of Theorem \ref{Lech} is the completion of a local domain with an uncountable spectrum. Then, we will use the construction in the previous section to find necessary and sufficient conditions for a complete local ring $T$ to be the completion of a local domain with a countable spectrum, and, more generally, with a spectrum smaller than $|\Spec(T)|$.
\par We begin with some preliminary results that will help us generalize the Prime Avoidance Theorem. The following theorem is a generalization of Corollary 2.6 from \cite{sharpvamos} and is used in the proof of Theorem \ref{countable spec}. \\

\begin{thm} \label{Sharp 2.6}
    Let $(R, M)$ be a local ring. Let $\mathfrak{a}$ be an ideal of $R$ and let $\mathcal{I}$ be an indexing set such that for each $i \in \mathcal{I}$, $\mathfrak{b}_i$ is an ideal of $R$, where $\mathfrak{a} \subseteq \bigcup_{i \in \mathcal{I}} \mathfrak{b}_i$. Suppose that $|R/M|$ is infinite and $|R/M| > |\mathcal{I}|$. 
    Then $\mathfrak{a} \subseteq \mathfrak{b}_i$ for some i.
\end{thm}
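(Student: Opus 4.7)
The plan is to prove by induction on $n \geq 1$ the stronger claim that every finite subset $\{a_1, \ldots, a_n\} \subseteq \mathfrak{a}$ is contained in a single $\mathfrak{b}_i$. Since ``local'' in this paper includes the Noetherian hypothesis, $\mathfrak{a}$ admits a finite generating set, so applying this claim to the generators will immediately force $\mathfrak{a} \subseteq \mathfrak{b}_i$.

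For setup, I would fix a set of coset representatives $\{r_\lambda : \lambda \in \Lambda\}$ of $M$ in $R$, so that $|\Lambda| = |R/M|$. The key feature, which is where the local hypothesis enters, is that for distinct $\lambda_1, \lambda_2 \in \Lambda$ the difference $r_{\lambda_1} - r_{\lambda_2}$ lies outside $M$ and is therefore a unit of $R$. The base case $n = 1$ is immediate from $\mathfrak{a} \subseteq \bigcup_{i} \mathfrak{b}_i$.

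For the inductive step from $n$ to $n+1$, given $a_1, \ldots, a_{n+1} \in \mathfrak{a}$, I would for each $\mu \in \Lambda$ apply the inductive hypothesis to the $n$-element subset $\{a_1 + r_\mu a_{n+1}, a_2, \ldots, a_n\} \subseteq \mathfrak{a}$ and select some $i(\mu) \in \mathcal{I}$ with all these elements in $\mathfrak{b}_{i(\mu)}$. Because $|\Lambda| = |R/M| > |\mathcal{I}|$, the map $\mu \mapsto i(\mu)$ cannot be injective, so there exist distinct $\mu_1, \mu_2 \in \Lambda$ with a common value $i^* \in \mathcal{I}$. Subtracting the two containments of $a_1 + r_{\mu_j} a_{n+1}$ in $\mathfrak{b}_{i^*}$ gives $(r_{\mu_1} - r_{\mu_2}) a_{n+1} \in \mathfrak{b}_{i^*}$, and the unit property of the coefficient then forces $a_{n+1} \in \mathfrak{b}_{i^*}$; from this, $a_1 = (a_1 + r_{\mu_1} a_{n+1}) - r_{\mu_1} a_{n+1} \in \mathfrak{b}_{i^*}$ as well, closing the induction.

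The main point to watch is that the cardinality hypothesis $|R/M| > |\mathcal{I}|$ must be invoked at every level of the induction, not just once for $\mathfrak{a}$ itself, since each inductive step uses pigeonhole on a function $\Lambda \to \mathcal{I}$. Beyond that the argument is a repeated application of the standard local prime-avoidance trick of forming a linear combination $a_1 + r a_{n+1}$, exploiting the fact that coset-representative differences are units, and eliminating one variable at a time.
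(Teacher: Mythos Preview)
Your proof is correct, but it differs from the paper's in structure. The paper fixes generators $x_1,\ldots,x_k$ of $\mathfrak{a}$ once and for all, forms the elements $y_\lambda = x_1 + u_\lambda x_2 + \cdots + u_\lambda^{k-1}x_k$ for all coset representatives $u_\lambda$, and then uses a single pigeonhole step to find $k$ of these lying in one $\mathfrak{b}_j$; the Vandermonde matrix $(u_{\lambda_i}^{j-1})$ is invertible because the pairwise differences $u_{\lambda_i}-u_{\lambda_j}$ are units, and inverting it recovers all $x_i$ inside $\mathfrak{b}_j$ at once. By contrast, you run an induction that peels off one element at a time, requiring only a \emph{collision} (two coset representatives hitting the same $\mathfrak{b}_{i^*}$) at each step rather than a $k$-fold coincidence. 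Your route avoids the Vandermonde determinant entirely and, as you implicitly note, does not actually use the hypothesis that $R/M$ is infinite---only that $|R/M|>|\mathcal{I}|$---whereas the paper needs infiniteness to guarantee some $B_j$ contains at least $k$ indices. The paper's argument is a one-shot linear-algebra trick; yours is a cleaner repeated two-point elimination. Both are standard variants of the Sharp--V\'amos prime avoidance.
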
   

\begin{proof}
    Let $\{u_{\lambda}\}$ for $\lambda \in R/M$ be a set of representatives of the cosets of $R/M$. Let $x_1, \ldots, x_k$ be a set of generators for $\mathfrak{a}$. For each $\lambda \in R/M$, define
    \[
    y_\lambda = x_1 + u_\lambda x_2 + \cdots + u_\lambda^{k-1}x_k
    \]
    Note that $y_\lambda \in \mathfrak{a} \subseteq \bigcup_{i \in \mathcal{I}} \mathfrak{b}_i$. 
    Hence for all $\lambda \in R/M$, there is some $i \in \mathcal{I}$ such that $y_\lambda \in \mathfrak{b}_i$. 
    Define the sets $B_i = \{ \lambda \mid y_\lambda \in \mathfrak{b}_i \}$. 
    Then there is some $B_j$ with infinitely many elements; hence there is some $B_j$ containing $k$ elements $y_{\lambda_1}, \ldots, y_{\lambda_k}$. Consider the matrix $P = (u_{\lambda_i}^{j-1})$; i.e. the matrix
    \[
    \begin{pmatrix}
    1 & u_{\lambda_1} &  \dots & u_{\lambda_1}^{k-1} \\
    1 & u_{\lambda_2} & \dots & u_{\lambda_2}^{k-1} \\
    \vdots & \vdots & \ddots & \vdots \\
    1 & u_{\lambda_k} & \dots & u_{\lambda_k}^{k-1}
    \end{pmatrix}
    \]
    Then because the difference $u_{\lambda_i} - u_{\lambda_j}$ is a unit for $i \neq j$, the matrix is invertible. 
    Note that $P(x_1, \ldots, x_k)^T = (y_{\lambda_1}, \ldots, y_{\lambda_k})^T$. Hence $(x_1, \ldots, x_k)^T = P^{-1}(y_{\lambda_1}, \ldots, y_{\lambda_k})^T$, so $x_1, \ldots, x_k \in \mathfrak{b}_j$, so $\mathfrak{a} \subseteq \mathfrak{b}_j$.
\end{proof}

The proof of the following lemma uses an argument from \cite{wiegand}. Although we only use the lemma in a special case in this section, we will need it in full generality in the next section. \\

\begin{lem} \label{Wiegand Argument}
    Let $(A,M)$ be a local ring and $I$ an ideal of $A$ such that $A$ is complete with respect to $I$. Let $Q_1 \subsetneq Q_2 \subsetneq Q_3$ be a chain of prime ideals of $A$ such that $I \nsubseteq Q_1$. Then there are $|A/M|^{\aleph_0}$ prime ideals $Q'$ with $Q_1 \subsetneq Q' \subsetneq Q_3$.
\end{lem}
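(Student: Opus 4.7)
The plan is to reduce to the case $Q_1=(0)$ by passing to the domain $A/Q_1$, use $I$-adic convergence to build a $|A/M|^{\aleph_0}$-indexed family of elements sitting inside $\bar Q_2$, and extract from each element a height-one prime contained in $\bar Q_2$ via Krull's principal-ideal theorem. The main subtlety will be showing the resulting assignment of sequences to primes is sufficiently non-collapsing.

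First I would replace $A$ by $A/Q_1$, which is a Noetherian local domain with residue field still $A/M$ and which is complete with respect to $\bar I:=(I+Q_1)/Q_1$; this ideal is nonzero since $I\nsubseteq Q_1$, and being Noetherian and complete in $\bar I$ forces $\bar I\subseteq M/Q_1$. The chain descends to $(0)\subsetneq\bar Q_2\subsetneq\bar Q_3$, and any prime strictly between $(0)$ and $\bar Q_3$ in the quotient lifts to a prime of $A$ strictly between $Q_1$ and $Q_3$, so it suffices to produce $|A/M|^{\aleph_0}$ such primes after the reduction.

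Next, fix $t\in\bar I\setminus\{0\}$, $s\in\bar Q_2\setminus\{0\}$, and a set $\{c_\lambda\}_{\lambda\in A/M}$ of coset representatives of $A/M$ in $A/Q_1$. For each sequence $\sigma\colon\mathbb{N}\to A/M$, the $\bar I$-adic Cauchy sum $f_\sigma=s\sum_{n\ge 0}c_{\sigma(n)}t^n$ converges in the quotient and lies in $(s)\subseteq\bar Q_2$. A term-by-term comparison shows that when $\sigma\ne\sigma'$ first disagree at index $n_0$, the difference $f_\sigma-f_{\sigma'}$ equals $s t^{n_0}$ times a unit, so the $|A/M|^{\aleph_0}$ elements $f_\sigma$ are pairwise distinct. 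For each $\sigma$ I would let $P_\sigma$ be a minimal prime of the nonzero principal ideal $(f_\sigma)$ contained in $\bar Q_2$; such a prime exists because $f_\sigma\in\bar Q_2$, and by Krull's principal-ideal theorem it has height one, yielding $(0)\subsetneq P_\sigma\subsetneq\bar Q_3$ as desired.

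The main obstacle is bounding the fibres of the assignment $\sigma\mapsto P_\sigma$: if $P_\sigma=P_{\sigma'}=P$, then $s t^{n_0}\cdot(\text{unit})\in P$ forces $s\in P$ or $t\in P$, and naively both can occur, so the family could a priori collapse to too few primes. The Wiegand-style combinatorial step resolves this by refining the construction---for instance, by letting the twisting element $s$ vary alongside $\sigma$ inside $\bar Q_2$, or by splitting cases according to whether $\bar I\subseteq\bar Q_2$ so that one of $s$, $t$ is guaranteed to lie outside $P$---after which one bounds each fibre by a cardinal strictly less than $|A/M|^{\aleph_0}$, yielding the claimed number of distinct primes. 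Executing this fibre-counting cleanly is the core technical step of the proof and is the ingredient borrowed from \cite{wiegand}.
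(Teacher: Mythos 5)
Your reduction to the domain $A/Q_1$ and the idea of indexing elements by $I$-adic series with coset-representative coefficients match the paper's strategy, but the construction you actually write down collapses, and the ``main obstacle'' you flag at the end is not a loose end to be tidied --- it is the whole content of the lemma, and your proposed element $f_\sigma = s\sum_n c_{\sigma(n)}t^n$ cannot be repaired by fibre-counting. Indeed $f_\sigma = s\,u_\sigma$ with $u_\sigma=\sum_n c_{\sigma(n)}t^n$, and $u_\sigma$ is a unit whenever $c_{\sigma(0)}\notin M$; for all such $\sigma$ you get $(f_\sigma)=(s)$, so the assigned minimal primes are literally identical, and restricting to $\sigma(0)=0$ just pushes the same degeneracy one step down. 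The fibres of $\sigma\mapsto P_\sigma$ are therefore of full size $|A/M|^{\aleph_0}$, not ``a priori possibly too large.''

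The paper's fix is an additive, not multiplicative, twist, and it makes each fibre a singleton. One reduces to a saturated chain, picks $b\in (I\cap Q_3)\setminus Q_1$, and --- this is the step you are missing --- uses $\operatorname{ht}(Q_3/Q_1)\ge 2$ to pick $a\in Q_3$ outside every minimal prime of $Q_1+(b)$. The elements are $z=a+q_1b+q_2b^2+\cdots$ with $q_i$ ranging over coset representatives of $A/M$; each lies in $Q_3$, and one takes $Q'$ minimal over $Q_1+(z)$ with $Q'\subsetneq Q_3$ (possible since $\operatorname{ht}(Q_1+(z))\le\operatorname{ht}(Q_1)+1$). The choice of $a$ forces $b\notin Q'$: if $b\in Q'$ then $a\in Q'$, making $Q'$ a minimal prime of $Q_1+(b)$ containing $a$, contradiction. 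Since two distinct series differ by $b^k$ times a unit, no $Q'$ avoiding $b$ can contain two of them, so distinct $z$ give distinct $Q'$ and the lower bound $|A/M|^{\aleph_0}$ follows with no fibre-counting at all. Note also that the lemma asserts an exact cardinality, so you still need the matching upper bound $|\Spec(A)|\le|A|\le|A/M|^{\aleph_0}$, which the paper gets from the injection $A\to\prod_n A/M^n$ together with Lemma \ref{residue field powers}; your proposal omits this entirely.
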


\begin{proof}
Note that, by hypothesis, $A$ is Noetherian and has dimension at least two.  Hence, $|\Spec(A)| \leq |A|$.  Now consider the map $f: A \longrightarrow (A/M, A/M^2, A/M^3, \ldots )$ given by $f(a) = (a + M, a + M^2, a + M^3, \ldots)$.  Since $\bigcap_{i = 1}^{\infty} M^i = (0)$, $f$ is injective.  By Lemma \ref{residue field powers}, $|(A/M, A/M^2, A/M^3, \ldots )| \leq |A/M|^{\aleph_0}$, and so $|A| \leq |A/M|^{\aleph_0}$.  It follows that $|\Spec(A)| \leq |A/M|^{\aleph_0}$.  This shows that there are at most $|A/M|^{\aleph_0}$ prime ideals $Q'$ with $Q_1 \subsetneq Q' \subsetneq Q_3$.

We now show that there are at least $|A/M|^{\aleph_0}$ prime ideals $Q'$ with $Q_1 \subsetneq Q' \subsetneq Q_3$.  Let $\S$ be a full set of coset representatives of $A/M$.
Note that every coset except $0+M$ consists entirely of units, so every element of $\S$ but one is a unit. Moreover, note that the difference between any two elements of $\S$ is a unit.

If the chain $Q_1 \subsetneq Q_2 \subsetneq Q_3$ is not saturated, we can find a saturated chain $Q_1 \subsetneq Q_2' \subsetneq Q_3'$ such that $Q_3' \subseteq Q_3$, and proving the result for the saturated chain will imply it for the original chain. 
Thus we assume $Q_1 \subsetneq Q_2 \subsetneq Q_3$ is saturated. Because $I \nsubseteq Q_1$ and $Q_3 \nsubseteq Q_1$, we can choose $b \in (I \cap Q_3) \backslash Q_1$. Note that $\text{ht}(Q_3/Q_1) \ge 2$ in $A/Q_1$, so $Q_3$ cannot be minimal over $Q_1 + (b)$. Thus we can choose $a \in Q_3 \backslash (\bigcup_{Q \in \Min(Q_1 + (b))} Q)$.
Now, consider an element $z$ of the form:
\[
z = a + q_1b + q_2b^2 + \cdots
\]
for $q_i \in \S$. 
Note that every element of this form is in $A$ because $A$ is complete with respect to $I$ and that the cardinality of the set of such elements is $|A/M|^{\aleph_0}$. Furthermore note that each element of this form is in $Q_3$.
\par Observe that $Q_1 + (z) \subseteq Q_3$ and $\text{ht}(Q_1 + (z)) \leq \text{ht}(Q_1) + 1$, so there exists a minimal prime ideal $Q'$ over $Q_1 + (z)$ such that $Q' \subsetneq Q_3$. If $b \in Q'$, then $a \in Q'$, but $Q'$ is minimal over $Q_1 + (b)$, a contradiction. 
Thus $b \notin Q'$. 
Now, assume that there exist $z,z' \in Q'$ of the above form with $z \neq z'$. Then
\begin{align}
    \notag z - z' &= a - a + (q_1 - q_1')b + (q_2 - q_2')b^2 + \cdots \\
    \notag &= b^k[ (q_k - q_k') + (q_{k + 1}-q_{k + 1}')b + \cdots] \\
    \notag &\in Q'
\end{align}
where we have factored out the highest power of $b$ up to which the coefficients of $z$ and $z'$ agree. 
Since $q_k - q_k'$ is a unit, $(q_k-q_k') + (q_{k+1}-q_{k+1}')b + \cdots$ is also a unit and thus $(q_k-q_k') + (q_{k+1}-q_{k+1}')b + \cdots \notin Q'$. 
But $b \notin Q'$, so $b^k \notin Q'$, a contradiction. Thus every element $z$ is in a distinct prime ideal $Q'$ with $Q_1 \subseteq Q' \subsetneq Q_3$. 
Equality for the first inclusion only holds for one prime ideal, so there are $|A/M|^{\aleph_0}$ prime ideals $Q'$ with $Q_1 \subsetneq Q' \subsetneq Q_3$.
\end{proof}

\begin{rmk} \label{bounds}
Observe that given a complete local ring $(T,M)$ and any precompletion $(A,M \cap A)$ of $T$,  $|T/M|  = |A/(M \cap A)|\leq |A| \leq |T|$. Furthermore, if the dimension of $T$ is at least two, then, by Theorem \ref{Sharp 2.6}, letting $\{\mathfrak{b}_i\}_{i \in \mathcal{I}}$ be the set of height one prime ideals of $A$ and letting $\mathfrak{a}$ be $M \cap A$, we have $|A/(M \cap A)| \leq |\Spec(A)|$. Therefore, in this case, we have $|T/M| = |A/(M \cap A)| \leq |\Spec(A)| \leq |\Spec(T)| \leq |T|$.
It follows that, if the dimension of $T$ is at least two, and if $|T|=|T/M|$, then $|\Spec(A)|= |A|= |T| = |\Spec(T)|$ for all precompletions $A$ of $T$.  
\end{rmk}

\begin{rmk}
For most of this section, we will assume that $\dim(T) \ge 2$. This allows for succinct statements of many theorems, since the case where $\dim(T) < 2$ often needs to be dealt with separately. However, this is not a significant restriction because having dimension one or zero already characterizes the spectrum of a local domain. 
Note that assuming $\dim(T) \ge 2$ will generally require a modification of condition (ii) of Theorem \ref{Lech}, because it cannot be the case that $M = (0)$ under this assumption.
\end{rmk}

By Remark \ref{bounds}, we know that if $T$ is a complete local ring with dimension at least two, and if $|T/M| = |T|$, then $|\Spec(T)| = |T|$.  The following theorem tells us that $|\Spec(T)| = |T|$ without the hypothesis $|T/M| = |T|$. \\

\begin{thm} \label{spec dim2}
    Let $(T, M)$ be a complete local ring with $\textup{dim}(T) \ge 2$. Then $|\Spec(T)| = |T|$, and in particular, $T$ and $\Spec(T)$ are uncountable.
\end{thm}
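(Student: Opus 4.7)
The plan is to sandwich $|\Spec(T)|$ between $|T/M|^{\aleph_0}$ and $|T|$, and then show these two bounds must coincide. First, I would observe that $\dim(T)\ge 2$ forces $T$ to be infinite (a finite Noetherian ring is Artinian, hence zero-dimensional), and since every ideal of the Noetherian ring $T$ is finitely generated, the set of ideals injects into the set of finite subsets of $T$; this gives the easy bound $|\Spec(T)|\le |T|$.

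Next I would harvest primes using Lemma \ref{Wiegand Argument}. Since $\dim(T)\ge 2$, pick a chain $Q_1\subsetneq Q_2\subsetneq Q_3$ in $\Spec(T)$. Because $Q_1\subsetneq Q_2\subseteq M$, we have $M\not\subseteq Q_1$, so applying Lemma \ref{Wiegand Argument} with $A=T$ and $I=M$ (noting $T$ is $M$-adically complete) yields $|T/M|^{\aleph_0}$ distinct prime ideals strictly between $Q_1$ and $Q_3$. This is the real content of the theorem; the remaining steps are just cardinal arithmetic.

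The last ingredient is $|T|\le|T/M|^{\aleph_0}$. The natural map $T\to\prod_{n\ge 1}T/M^n$ sending $t\mapsto(t+M^n)_{n\ge 1}$ is injective because $\bigcap_n M^n=(0)$ in a complete local ring. By Lemma \ref{residue field powers}, each $|T/M^n|$ is either finite or equal to $|T/M|$; since $T/M$ is a field and so $|T/M|\ge 2$, in both cases $|T/M^n|\le |T/M|^{\aleph_0}$ for every $n$. Thus $|T|\le (|T/M|^{\aleph_0})^{\aleph_0}=|T/M|^{\aleph_0}$.

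Chaining the three estimates gives
\[
|T/M|^{\aleph_0} \;\le\; |\Spec(T)| \;\le\; |T| \;\le\; |T/M|^{\aleph_0},
\]
so $|\Spec(T)|=|T|$. Finally, $|T/M|\ge 2$ yields $|T|\ge 2^{\aleph_0}$, so both $T$ and $\Spec(T)$ are uncountable. No major obstacle is anticipated beyond the application of Lemma \ref{Wiegand Argument}, which does the real work of producing prime ideals; the rest is bookkeeping.
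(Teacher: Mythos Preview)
Your proof is correct and follows essentially the same approach as the paper: apply Lemma~\ref{Wiegand Argument} with $I=M$ to a chain of length two to get $|T/M|^{\aleph_0}\le|\Spec(T)|\le|T|$, then use the injection $T\hookrightarrow\prod_n T/M^n$ together with Lemma~\ref{residue field powers} to force $|T|\le|T/M|^{\aleph_0}$. The paper's write-up is terser (it cites Lemma~\ref{Wiegand Argument} for both the lower bound on $|\Spec(T)|$ and the upper bound $|\Spec(T)|\le|T|$, since that lemma's proof already contains the injection argument), but the logical structure is identical.
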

\begin{proof}
Note that $T$ is complete with respect to $M$, and there must exist a chain $Q \subsetneq P \subsetneq M$ of prime ideals of $T$ because $\textup{dim}(T) \ge 2$. Therefore, $|T/M|^{\aleph_0} \leq |\Spec(T)| \leq |T|$ by Lemma \ref{Wiegand Argument}, but $|T/M|^{\aleph_0} \geq |T|$ by the definition of completion, so $|T/M|^{\aleph_0} = |\Spec(T)| = |T|$.  It follows that $T$ and $\Spec(T)$ are uncountable.
%Finally, by Lemma 2.2 in \cite{dundon}, a complete local ring of dimension at least one is uncountable.
\end{proof}

We now give conditions for a complete local ring $T$ of dimension at least two to be the completion of a domain with a spectrum smaller than that of $T$. \\

\begin{thm}\label{countable spec}
    Let $(T, M)$ be a complete local ring with $\textup{dim}(T) \ge 2$. Then $T$ is the completion of a local domain $A$ with $|\Spec(A)| < |\Spec(T)|$ if and only if the following three conditions hold.
    \begin{enumerate}[label={(\roman*)}]
    \item No integer of $T$ is a zerodivisor
    \item $M \notin \Ass(T)$
    \item $|T/M| < |T|$
    \end{enumerate}
\end{thm}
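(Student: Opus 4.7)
The plan is to deduce Theorem \ref{countable spec} directly by assembling results already established, without introducing any new construction. Both directions rest on the cardinality bookkeeping in Remark \ref{bounds} together with Theorem \ref{spec dim2} and Corollary \ref{small domain thm}.

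For the forward direction, suppose $T$ is the completion of a local domain $A$ with $|\Spec(A)| < |\Spec(T)|$. The assumption $\dim(T) \geq 2$ forces $M \neq (0)$, so Lech's theorem (Theorem \ref{Lech}) applied to $A$ immediately yields (i) and (ii). To derive (iii), I would invoke Remark \ref{bounds}: under the hypothesis $\dim(T) \geq 2$, it gives $|T/M| \leq |\Spec(A)|$. Combining this with the hypothesis $|\Spec(A)| < |\Spec(T)|$ and the identity $|\Spec(T)| = |T|$ furnished by Theorem \ref{spec dim2}, I get
\[
|T/M| \;\leq\; |\Spec(A)| \;<\; |\Spec(T)| \;=\; |T|,
\]
which is exactly condition (iii).

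For the reverse direction, assume (i), (ii), (iii). By Corollary \ref{small domain thm}, there exists a local domain $A$ with $\widehat{A} = T$ and $|A| < |T|$. Since $A$ is Noetherian, the standard bound $|\Spec(A)| \leq |A|$ holds (each prime is finitely generated, so when $A$ is infinite $|\Spec(A)| \leq |A|^{<\omega} = |A|$; the finite case is trivial). Applying Theorem \ref{spec dim2} once more gives
\[
|\Spec(A)| \;\leq\; |A| \;<\; |T| \;=\; |\Spec(T)|,
\]
completing the argument.

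In short, there is no real obstacle here: the substantive work has been packed into the earlier results, particularly Theorem \ref{general domain construction} (the construction of a small domain precompletion), the Wiegand-style estimate in Lemma \ref{Wiegand Argument}, and its corollary Theorem \ref{spec dim2} that pins down $|\Spec(T)|$. The only subtlety worth flagging is that the forward direction genuinely needs Remark \ref{bounds} to transport the spectral inequality into an inequality on residue fields; without the $\dim(T) \geq 2$ hypothesis one could not guarantee $|T/M| \leq |\Spec(A)|$, and the theorem as stated would fail.
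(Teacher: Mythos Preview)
Your proof is correct and follows essentially the same approach as the paper's: both directions invoke Theorem~\ref{Lech}, Remark~\ref{bounds}, and Corollary~\ref{small domain thm} in the same way, with Theorem~\ref{spec dim2} supplying $|\Spec(T)| = |T|$. The only difference is that you spell out a few steps (e.g.\ $M \neq (0)$ and $|\Spec(A)| \le |A|$) that the paper leaves implicit.
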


\begin{proof}
Suppose (i),(ii), and (iii) hold; then the statement follows from Corollary \ref{small domain thm}, which guarantees the existence of a precompletion $A$ such that $|\Spec(A)| \le |A| < |T| \le |\Spec(T)|$.
% If $|T/M|$ is infinite, it follows from Theorem \ref{general domain construction} that $T$ is the completion of a local domain $A$ with $|\Spec(A)| \leq |A| = |T/M| < |T| = |\Spec(T)|$. 
% If $|T/M|$ is finite, then $T$ is the completion of a countable local domain $A$.  Since $\Spec(T)$ must be uncountable, we have  $|\Spec(A)| < |\Spec(T)|$. 
\par Now suppose that $T$ is the completion of a local domain $A$ with $|\Spec(A)| < |\Spec(T)|$. Then (i) and (ii) hold by Theorem \ref{Lech}, and (iii) follows from Remark \ref{bounds}.
\end{proof}
%\\
\begin{thm}\label{countable spec cor}
    Let $(T, M)$ be a complete local ring with $\textup{dim}(T) \ge 2$. Then $T$ is the completion of a local domain with countable spectrum if and only if the following three conditions hold.
    \begin{enumerate}[label={(\roman*)}]
    \item No integer of $T$ is a zerodivisor
    \item $M \notin \Ass(T)$
    \item $T/M$ is countable.
    \end{enumerate}
\end{thm}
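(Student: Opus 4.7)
The plan is to deduce this theorem almost immediately from results already established in the paper, mirroring the structure of the proof of Corollary \ref{countable domain thm} but with the additional input of Remark \ref{bounds} to handle the spectrum side. There is no need for a fresh construction.

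For the reverse direction, suppose that conditions (i), (ii), and (iii) hold. Since $\dim(T) \ge 2$, $M \neq (0)$, so the hypotheses of Corollary \ref{countable domain thm} are satisfied. Applying that corollary produces a countable local domain $A$ with $\widehat{A} = T$. Since $|\Spec(A)| \le |A|$ and $A$ is countable, $\Spec(A)$ is countable, as desired.

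For the forward direction, suppose that $T$ is the completion of a local domain $A$ with $\Spec(A)$ countable. Since $\dim(T) \ge 2$, we have $M \neq (0)$, so Theorem \ref{Lech} immediately yields conditions (i) and (ii). For condition (iii), I would invoke Remark \ref{bounds}: because $\dim(T) \ge 2$, the remark gives $|T/M| = |A/(M \cap A)| \le |\Spec(A)|$, and the right-hand side is countable by assumption, so $T/M$ is countable.

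There is no real obstacle; the work is already done in Corollary \ref{countable domain thm} and Remark \ref{bounds}. The only thing to verify carefully is that we are entitled to apply Corollary \ref{countable domain thm} (the hypothesis about $M$ is the slightly weaker "unless equal to $(0)$, $M \notin \Ass(T)$," which is automatic here since $\dim(T) \ge 2$ rules out $M = (0)$) and that the $\dim(T) \ge 2$ clause of Remark \ref{bounds} genuinely supplies the inequality $|T/M| \le |\Spec(A)|$ via Theorem \ref{Sharp 2.6}.
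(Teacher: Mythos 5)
Your proposal is correct and follows essentially the same route as the paper's own proof: the backward direction via Corollary \ref{countable domain thm} plus the observation that a countable Noetherian ring has countable spectrum, and the forward direction via Theorem \ref{Lech} together with the inequality $|T/M| = |A/(M\cap A)| \le |\Spec(A)|$ from Remark \ref{bounds}. Your extra care in checking that $\dim(T)\ge 2$ forces $M\neq(0)$ is a sensible (if implicit in the paper) detail.
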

\begin{proof}
   If (i), (ii), and (iii) hold, then $T$ is the completion of a countable local domain $A$ by Corollary \ref{countable domain thm}.  Since $A$ is countable and Noetherian, $\Spec(A)$ is also countable.
   
   Now suppose that $T$ is the completion of a local domain $A$ with $\Spec(A)$ countable.  Then (i) and (ii) hold by Theorem \ref{Lech}.  By Remark \ref{bounds}, we have $|A/(M \cap A)| \leq |\Spec(A)|$, and so $|T/M| = |A/(M \cap A)|\leq |\Spec(A)|$.  It follows that $T/M$ is countable.
   \end{proof}

We now consider when a complete local ring $(T, M)$ is the completion of a domain $A$ with $|\Spec(T)| = |\Spec(A)|$ or $|T| = |A|$. Note that all of the following results can be restated less generally in terms of countable or uncountable cardinalities, given that a complete local ring of dimension greater than two is uncountable.
\par First, we state a result about the cardinality of spectra in general.
The following lemma and remark are Lemma 2.3 and Remark 2.4 from \cite{small17}, adapted from \cite{pippa}. \\

\begin{lem} [\cite{small17} Lemma 2.3]\label{2017 2.3}
Let $(T, M)$ be a complete local ring of dimension at least one, and let $G$ be a set of nonmaximal prime ideals of $T$ where $G$ contains the associated prime ideals of $T$ and such that the set of maximal elements of $G$ is finite. Moreover, suppose that if $Q \in \Spec(T)$ with $Q \subseteq P$ for some $P \in G$ then $Q \in G$. Also suppose that, for each prime ideal $P \in G$, $P$ contains no nonzero integers of $T$. Then there exists a local domain $A$ such that:

\begin{enumerate}[label={(\roman*)}]
    \item $\widehat{A} \cong T$,
    \item If $P$ is a nonzero prime ideal of $A$, then $T \otimes_A k(P) \cong k(P)$, where $k(P) = A_P/PA_P$,
     \item $\{P \in \Spec(T) \mid P \cap A = (0) \} = G$,
    \item If $I$ is a nonzero ideal of $A$, then $A/I$ is complete.
    \end{enumerate}
\end{lem}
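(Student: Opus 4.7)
The plan is to construct $A$ as an ascending union $A = \bigcup_\lambda R_\lambda$ of quasi-local subrings of $T$ built by transfinite induction, extending the PB-subring machinery from Section 2. I would strengthen the PB-subring property by requiring $R \cap P = (0)$ for every $P \in G$ (not just for $P \in \Ass(T)$), while keeping the cardinality bound of Definition \ref{pb-subring}. The initial ring $R_0$ is the prime subring of $T$ localized at its intersection with $M$; since by hypothesis each $P \in G$ contains no nonzero integer, $R_0 \cap P = (0)$ is automatic.

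The induction interleaves five families of tasks, enumerated over a single well-ordered set: (a) for each $\bar t \in T/M^2$, adjoin a preimage, as in Lemma \ref{Deb 3.8}, so that eventually $A \to T/M^2$ is surjective; (b) close up each finitely generated ideal, as in Lemma \ref{Deb 3.11}, to ensure $IT \cap R = I$; (c) for each $P \in \Spec(T) \setminus G$, adjoin an element of $P$ to force $P \cap A \neq (0)$; (d) for each nonzero $y \in R_\lambda$ and each $u \in T$, adjoin some $a$ with $u - a \in yT$, which will yield (iv); (e) for each prime $P \notin G$ and each $u \in T$, adjoin $c, d \in R_\lambda$ with $d \notin P$ and $ud - c \in PT$, which will yield (ii). At each limit stage I take unions and verify the strengthened PB property by the argument of Lemma \ref{Deb 3.9}.

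The main obstacle at each elementary step is producing an element $x$ that lies in a prescribed ideal $J$ of $T$ while avoiding a small family of cosets $r + Q$ for $Q$ in the finite set $\max(G)$ and $r$ drawn from the prior data; Lemmas \ref{heitmann93.2}, \ref{heitmann93.3}, and \ref{Deb 3.6} produce such an $x$ whenever $J \not\subseteq Q$ for every $Q \in \max(G)$ and the family of forbidden cosets is suitably bounded. For task (c), where $J = P \notin G$, this is the crux: downward closure of $G$ forces $P \not\subseteq Q$ for any $Q \in \max(G)$, as otherwise $P \in G$. For the other tasks, $J$ is $M^2$, an ideal $yT$ with $y$ a nonzerodivisor of $T$, or an ideal sitting inside a prime outside $G$; in each case $J \not\subseteq Q$ follows from primality and the fact that elements of $A$ outside $\bigcup \Ass(T)$ are nonzerodivisors on $T$.

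With the construction finished, $A$ is a domain since $A \cap Q = (0)$ for all $Q \in \Ass(T) \subseteq G$. Tasks (a) and (b) combined with Proposition \ref{cpm} yield (i) that $A$ is Noetherian and $\widehat A \cong T$. Task (c) together with $A \cap P = (0)$ for $P \in G$ yields (iii). For (iv), the closing-up from (b) makes $A/I \hookrightarrow T/IT$ injective and task (d) makes it surjective, so $A/I = T/IT$ is already complete. For (ii), task (e) forces the formal fiber of $T$ over each nonzero prime of $A$ to reduce to the residue field of that prime, giving $T \otimes_A k(P) \cong k(P)$.
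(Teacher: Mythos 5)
First, note that the paper does not prove this statement at all: it is imported verbatim as Lemma 2.3 of \cite{small17} (itself adapted from \cite{pippa}), so there is no in-paper argument to compare against. Your overall strategy --- a transfinite chain of quasi-local subrings with a strengthened intersection condition $R \cap P = (0)$ for all $P \in G$, interleaving surjectivity onto $T/M^2$, closing up ideals, puncturing each $P \notin G$, and forcing $A/yA \twoheadrightarrow T/yT$ --- is indeed the strategy of the cited source, and most of your individual reductions are sound (in particular, using downward closure of $G$ to get $P \not\subseteq Q$ for $Q \in \max(G)$ when $P \notin G$ is exactly right, and (ii) is in fact a formal consequence of (iv), since $A/P$ complete gives $A/P \cong T/PT$ and hence $T \otimes_A k(P) \cong k(P)$, so your task (e) is redundant).

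The genuine gap is in the coset-avoidance step, which you route entirely through Lemmas \ref{heitmann93.2}, \ref{heitmann93.3}, and \ref{Deb 3.6}. Those lemmas require the forbidden data to be countable or to satisfy $|C \times D| < |T/M|$. But your tasks (c), (d), (e) are indexed by sets of cardinality up to $|T|$ (all $u \in T$, all $P \in \Spec(T)\setminus G$), so the intermediate rings $R_\lambda$ --- and hence the sets $D$ of coset representatives algebraic over $R_\lambda$ --- must eventually have cardinality at least $|T/M|$, and in general strictly larger whenever $|T/M| < |T|$ (e.g.\ $T = \mathbb{Q}[[x,y]]$). At that point none of the avoidance lemmas available in this paper applies, and the construction stalls. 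This cannot be patched by shrinking the task list: condition (iv) genuinely forces $|A| = |T|$, since $A/yA \cong T/yT$. The correct fix, and the one used in \cite{pippa} and \cite{small17}, is a stronger avoidance lemma asserting that for a complete local ring of dimension at least one and a nonmaximal prime $Q$, one has $|T/Q| = |T|$, so that $I \not\subseteq \bigcup\{r + Q\}$ already when $|C \times D| < |T|$. This is precisely where the hypothesis $\dim(T) \ge 1$ enters --- a hypothesis your argument never uses, which is the telltale sign of the missing ingredient.
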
  

\begin{rmk} \label{2017 2.4}
It is noted in \cite{small17} that, for the $T$ and $A$ in Lemma \ref{2017 2.3}, there is a bijection between the nonzero prime ideals of $A$ and the prime ideals of $T$ that are not in $G$.
\end{rmk}
Note that the conditions in the following theorem are identical to those of Theorem \ref{Lech} when the dimension of $T$ is at least two.
\begin{thm} \label{uncountable spec thm}
    Let $(T, M)$ be a complete local ring  with $\dim(T) \ge 2$. Then $T$ is the completion of a local domain $A$ with $|\Spec(A)| = |\Spec(T)|$ if and only if
    \begin{enumerate}[label={(\roman*)}]
    \item No integer of $T$ is a zerodivisor, and
    \item $M \notin \Ass(T)$.
    \end{enumerate}
\end{thm}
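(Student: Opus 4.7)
The forward direction is immediate from Theorem \ref{Lech}, since any local domain $A$ with $\widehat{A}=T$ forces (i) and (ii). For the converse, my strategy is to apply Lemma \ref{2017 2.3} with the set
$G = \{Q \in \Spec(T) : Q \subseteq P \text{ for some } P \in \Ass(T)\}$,
i.e., the downward closure of $\Ass(T)$ in $\Spec(T)$. Verifying the hypotheses of that lemma is routine: $G$ contains $\Ass(T)$ and is downward closed by construction; its maximal elements are exactly the (finitely many) associated primes of $T$; $M \notin G$, because $M \notin \Ass(T)$ by (ii) and $M$ is maximal; and no $Q \in G$ contains a nonzero integer, since by (i) such an integer would be a zerodivisor and hence could not lie in any $P \in \Ass(T)$, a property that passes to any $Q \subseteq P$. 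Lemma \ref{2017 2.3} then yields a local domain $A$ with $\widehat{A}=T$, and Remark \ref{2017 2.4} provides a bijection between the nonzero primes of $A$ and $\Spec(T) \setminus G$.

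The remaining task, which I expect to be the main obstacle, is to show $|\Spec(T) \setminus G| = |\Spec(T)|$. Since $|\Spec(A)| \leq |\Spec(T)|$ holds automatically (faithful flatness of the completion yields a surjection $\Spec(T) \to \Spec(A)$) and $\Spec(T)$ is infinite by Theorem \ref{spec dim2}, this equality will give $|\Spec(A)|=|\Spec(T)|$. My plan is to exhibit $|\Spec(T)|$ many primes of height $\dim(T)-1$, all but finitely many of which lie outside $G$. Since $\dim(T) \geq 2$, I can choose primes $Q_1 \subsetneq Q_2 \subsetneq M$ sitting at the top of a saturated chain of length $\dim(T)$; the standard bound $\text{ht}(Q_1) + \dim(T/Q_1) \leq \dim(T)$ then forces $\text{ht}(Q_1) = \dim(T)-2$. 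Applying Lemma \ref{Wiegand Argument} to $T$ (which is complete with respect to $M$) with $I=M$ and this chain produces $|T/M|^{\aleph_0}$ distinct primes $P'$ satisfying $Q_1 \subsetneq P' \subsetneq M$, each of height exactly $\dim(T)-1$ by the same dimension inequality.

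The final step is a height comparison: if $P' \in G$, then $P' \subseteq P_0$ for some $P_0 \in \Ass(T)$, and since $\text{ht}(P_0) \leq \dim(T)-1 = \text{ht}(P')$, we must have $P' = P_0 \in \Ass(T)$. Because $\Ass(T)$ is finite while the constructed family has cardinality $|T/M|^{\aleph_0} = |T| = |\Spec(T)|$ (again by Theorem \ref{spec dim2}), this gives $|\Spec(T) \setminus G| \geq |\Spec(T)|$, completing the proof. The key insight defeating the main obstacle is applying Lemma \ref{Wiegand Argument} at the very top of a maximal prime chain: this simultaneously produces an abundance of primes and pins down their height so tightly that $G$, which is controlled by the finite set $\Ass(T)$, can only capture finitely many of them.
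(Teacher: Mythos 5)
Your proposal is correct and follows essentially the same route as the paper: both apply Lemma \ref{2017 2.3} with $G$ the downward closure of $\Ass(T)$, invoke Remark \ref{2017 2.4} for the bijection with $\Spec(A)\setminus\{0\}$, produce $|T/M|^{\aleph_0}=|T|=|\Spec(T)|$ primes directly below $M$ via Lemma \ref{Wiegand Argument} as in Theorem \ref{spec dim2}, and use finiteness of $\Ass(T)$ to see that $G$ misses all but finitely many of them. Your height-$(\dim(T)-1)$ bookkeeping is just a more explicit version of the paper's observation that the only coheight-one elements of $G$ are associated primes, so no further comment is needed.
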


\begin{proof}
The forward direction follows from Theorem \ref{Lech}.  To show the backwards direction, we will use Theorem \ref{spec dim2}, Lemma \ref{2017 2.3}, and Remark \ref{2017 2.4}. Let $G = \{ P \in \Spec(T) \mid P \subseteq Q \text{ for some } Q \in \Ass(T) \}$. Then $G$ satisfies the conditions of Lemma \ref{2017 2.3}, so there exists a domain $A$ such that $\widehat{A} = T$ and, by Remark \ref{2017 2.4}, there is a bijection between $\Spec(T) \backslash G$ and $\Spec(A) \backslash \{ 0 \}$. By a similar argument to that in the proof of Theorem \ref{spec dim2}, there are $|T| = |\Spec (T)|$ coheight one prime ideals in $T$. Then note that $\Ass(T)$ is finite and the only elements of $G$ that are coheight one are in $\Ass(T)$. It follows that $\Spec(T) \backslash G$ has cardinality $|\Spec (T)|$, as does $\Spec(A)$.
\end{proof}
Because the conditions of Theorem \ref{uncountable spec thm} are identical to the conditions of Theorem \ref{Lech} when the dimension of $T$ is at least two, any complete local ring $T$ of dimension at least two that is the completion of a local domain is also the completion of some local domain $A$ with $|\Spec(A)| = |\Spec(T)|$. 
The following corollary shows that an analogous statement holds for $T$ being the completion of a local domain $A$ with $|A| = |T|$. \\

\begin{cor} \label{uncountable ring cor}
    Let $(T, M)$ be a complete local ring with $\textup{dim}(T) \ge 2$. Then $T$ is the completion of a local domain $A$ with $|A| = |T|$ if and only if
    \begin{enumerate}[label={(\roman*)}]
    \item No integer of $T$ is a zerodivisor, and
    \item $M \notin \Ass(T)$.
    \end{enumerate}
\end{cor}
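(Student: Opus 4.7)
The forward direction will be handled by a direct appeal to Theorem \ref{Lech}: if $T$ is the completion of any local domain $A$, then (i) and (ii) must hold (noting that the dimension assumption $\dim(T) \ge 2$ rules out the case $M = (0)$ in Lech's statement). No construction is needed here.

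For the backward direction, the plan is to avoid doing any further construction by leveraging Theorem \ref{uncountable spec thm} together with the cardinality bound $|\Spec(A)| \le |A|$ that holds for any Noetherian ring $A$. First, I would apply Theorem \ref{uncountable spec thm} to the given $T$ (whose hypotheses are identical to (i) and (ii) when $\dim(T)\ge 2$) to obtain a local domain $A$ with $\widehat{A}=T$ and $|\Spec(A)|=|\Spec(T)|$. By Theorem \ref{spec dim2}, $|\Spec(T)|=|T|$, so $|\Spec(A)| = |T|$.

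Next, I would pin down the cardinality of $A$ itself. On one side, since $A$ is Noetherian local and thus embeds into its completion via the map $A \to \widehat{A}=T$ (injective because $\bigcap_n (M\cap A)^n = (0)$), we get $|A|\le |T|$. On the other side, because $A$ is Noetherian every prime ideal of $A$ is finitely generated, so $\Spec(A)$ injects into the set of finite subsets of $A$; when $A$ is infinite this yields $|\Spec(A)|\le |A|$. (The ring $A$ is indeed infinite because $T$ is uncountable by Theorem \ref{spec dim2} and $|A|\ge |A/(M\cap A)| = |T/M|$, and in any case $|\Spec(A)|=|T|$ is uncountable, forcing $A$ to be infinite.) Chaining these inequalities gives
\[
|T| \;=\; |\Spec(T)| \;=\; |\Spec(A)| \;\le\; |A| \;\le\; |T|,
\]
so $|A|=|T|$, completing the proof.

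I do not expect any substantive obstacle, since all the real work has already been done in Theorems \ref{uncountable spec thm} and \ref{spec dim2}. The only point that requires a moment of care is the standard bound $|\Spec(A)|\le |A|$ for infinite Noetherian $A$, which follows from the fact that each prime ideal is determined by a finite generating set.
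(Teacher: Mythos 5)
Your proposal is correct and matches the paper's argument: both directions proceed exactly as in the paper, with the forward implication from Theorem \ref{Lech} and the backward implication by applying Theorem \ref{uncountable spec thm} together with the chain $|T| = |\Spec(A)| \le |A| \le |T|$. The extra justifications you supply for $|\Spec(A)| \le |A|$ and $|A| \le |T|$ are standard facts the paper leaves implicit.
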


\begin{proof}
By Theorem \ref{uncountable spec thm}, $T$ is the completion of a local domain $A$ with $|\Spec(A)| = |T|$ if conditions (i) and (ii) hold. Then $|T| = |\Spec(A)| \leq |A| \leq |T|$, so $|A| = |T|$. Furthermore, by Theorem \ref{Lech}, $T$ is the completion of a domain only if conditions (i) and (ii) hold.
\end{proof}

\begin{ex}
    Let $T = \frac{\mathbb{Q}[[x,y,z]]}{(xyz)}$. Then by Corollary \ref{countable domain thm}, $T$ is the completion of a countable local domain with countable spectrum, and by Theorem \ref{uncountable spec thm}, $T$ is the completion of an uncountable local domain with uncountable spectrum.
\end{ex}

Note that a countable Noetherian ring cannot have an uncountable spectrum, so no countable precompletion of a complete local ring $T$ can have an uncountable spectrum. One open question that remains is: when does $T$ have an uncountable precompletion $A$ where $\Spec(A)$ is countable? While specific examples of uncountable rings of dimension at least two with a countable spectrum exist, the general question is unresolved. \\

\section{Precompletions with Mixed Cardinality Spectra}
We know that a complete local ring can have different precompletions with very different properties. For example, both $\mathbb{Q}[x,y]_{(x,y)}$ and $\mathbb{Q}[[x]][y]_{(x,y)}$ are precompletions of $\mathbb{Q}[[x,y]]$.
%A classic example of two precompletions of the same complete local ring which have very different properties are the rings $\mathbb{Q}[x,y]_{(x,y)}$ and $\mathbb{Q}[[x]][y]_{(x,y)}$, which are both precompletions of $\mathbb{Q}[[x,y]]$.
In this example, $\mathbb{Q}[x,y]_{(x,y)}$ is countable, whereas $\mathbb{Q}[[x]][y]_{(x,y)}$ is not. The latter precompletion is of particular interest. More generally, we have that 
\[
\frac{k[[x_1, \ldots ,x_n]][x_{n+1}, \ldots ,x_m]_{(x_1, x_2, \ldots ,x_m)}}{I}
\]
is a precompletion of 
\[
\frac{k[[x_1,\ldots,x_n, x_{n+1},\ldots,x_m]]}{I}
\]
for any field $k$ and any ideal $I$ that is generated by polynomials. That is, we can ``mix" the polynomial and power series rings to any desired degree and still have a precompletion of the power series ring. This gives rise to an interesting spectrum diagram in a particular case: let $R = \mathbb{Q}[[x]][y,z]_{(x,y,z)}/(xy)$. Then the partially ordered set $\Spec(R)$ is as follows below.

\begin{comment}
\begin{center}
    \begin{tikzpicture}
        \node[label=above:{$(x,y,z)$}] (A) at (3, 3) {};
        \node[shape=rectangle,draw=black,minimum size=0.5cm,label={[align=center,below]\scriptsize$\aleph_0$}] (B) at (1, 1) {};
        \node[minimum size=0.6cm,label={[align=center,below]$(x,y)$}] (C) at (3, 1) {};
        \node[shape=rectangle,draw=black,minimum size=0.5cm,label={[align=center,below]$\mathfrak{c}$}] (D) at (5, 1) {};
        \node[label={[label distance=-0.25cm,align=center,below] $(x)$}] (E) at (2, -1) {};
        \node[label=below: $(y)$] (F) at (4, -1) {};
            
        \foreach \n in {A,E,F}
            \node at (\n)[circle,fill,inner sep=1.5pt]{};
        
        \path [-] (A) edge node[left] {} (B);
        \path [-] (A) edge node[left] {} (C);
        \path [-] (A) edge node[left] {} (D);
        \path [-] (B) edge node[left] {} (E);
        \path [-] (C) edge node[left] {} (E);
        \path [-] (C) edge node[left] {} (F);
        \path [-] (D) edge node[left] {} (F);
    \end{tikzpicture}
\end{center}
\end{comment}

\begin{center}
    \begin{tikzpicture}
        \node[label=above:{$(x,y,z)$}] (A) at (3, 3) {};
        \node[shape=rectangle,draw=black,minimum size=0.5cm,label={[label distance=-0.5cm,align=center]\scriptsize$\aleph_0$}] (B) at (1, 1) {};
        \node[minimum size=0.6cm,label={[label distance=-0.65cm,align=center]$(x,y)$}] (C) at (3, 1) {};
        \node[shape=rectangle,draw=black,minimum size=0.5cm,label={[label distance=-0.5cm,align=center]$\mathfrak{c}$}] (D) at (5, 1) {};
        \node[label={[label distance=-0.95cm,align=center] $(x)$}] (E) at (2, -1) {};
        \node[label=below: $(y)$] (F) at (4, -1) {};
            
        \foreach \n in {A,E,F}
            \node at (\n)[circle,fill,inner sep=1.5pt]{};
        
        \path [-] (A) edge node[left] {} (B);
        \path [-] (A) edge node[left] {} (C);
        \path [-] (A) edge node[left] {} (D);
        \path [-] (B) edge node[left] {} (E);
        \path [-] (C) edge node[left] {} (E);
        \path [-] (C) edge node[left] {} (F);
        \path [-] (D) edge node[left] {} (F);
    \end{tikzpicture}
\end{center}

\hfill \newline
Here the boxes denote collections of prime ideals in the partial order. For instance, there are $\aleph_0$ prime ideals $P$ such that $(x) \subsetneq P \subsetneq (x,y,z)$, and $P$ is incomparable to every other prime ideal. 
It follows that $\hat{R} = \mathbb{Q}[[x,y,z]]/(xy)$ has a precompletion with a spectrum of ``unbalanced cardinality.'' Note that $\hat{R}$ itself does not have a similarly unusual spectrum diagram; in fact, as a consequence of Lemma \ref{Wiegand Argument}, such an unbalance is impossible for any complete local ring.
This observation leads to the question: when does a given complete local ring have a precompletion with a similar ``unbalanced'' spectrum structure?
\par Given a local ring $(A, M)$, we can complete $A$ with respect to an ideal that is not maximal, or even prime. For example, $\mathbb{Q}[[x]][y,z]$ is equal to the completion of $\mathbb{Q}[x,y,z]$ with respect to the $(x)$-adic topology. In the example above, this ``partial completion" gives rise to a ring with an unbalanced spectrum diagram. 
We will prove that in general, we can %jar\footnote{Shelf life $\sim 2$ years.} 
use this technique of creating partial completions to find precompletions with unbalanced cardinalities.
\par First, we verify that most important properties of rings are preserved under partial completions. \\

%\begin{comment}
\begin{lem} \label{quotients preserved}
    Let $(R,M)$ be a local ring and let $I$ and $J$ be ideals of $R$ with $I \subseteq J$.
    Denote by $\hat{R}$ the completion $\varprojlim R/I^n$, and similarly for $\hat{J}$.
    Then we have that $\hat{R}/\hat{J} \cong R/J$.
\end{lem}
\begin{proof}
    This follows from the identities
    \[
    \frac{R}{J} \cong \frac{R/I}{J/I} \cong \frac{\hat{R}/\hat{I}}{\hat{J}/\hat{I}} \cong \frac{\hat{R}}{\hat{J}}
    \]
\end{proof}
\hfill \\
%\end{comment}

\begin{lem} \label{completion preserved}
    Let $R$ be a Noetherian ring, $n \geq 1$, and $I$ and $J$ ideals of $R$ with $I \subseteq J$. Denote by $\widehat{R}$ and $\widehat{J}$ the completions of $R$ and $J$ with respect to the ideal $I$. Then
    \[
    \varprojlim \widehat{R}/\widehat{J}^n = \varprojlim R/J^n.
    \]
\end{lem}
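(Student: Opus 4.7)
The plan is to show that for every $n \geq 1$ the natural map $R/J^n \to \widehat{R}/\widehat{J}^n$ is an isomorphism, with these isomorphisms compatible across $n$, and then pass to the inverse limit. Three identifications underlie the argument: $\widehat{J} = J\widehat{R}$, the equality $\widehat{J}^n = \widehat{J^n}$ as ideals of $\widehat{R}$ (both realized as $J^n\widehat{R}$), and the quotient isomorphism $\widehat{R}/J^n\widehat{R} \cong R/J^n$.

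First I would identify $\widehat{J}$ with the ideal $J\widehat{R}$. Since $R$ is Noetherian and $J$ is finitely generated, the $I$-adic completion of $J$ as an $R$-module equals $J \otimes_R \widehat{R}$. Flatness of $\widehat{R}$ over $R$ applied to the inclusion $0 \to J \to R$ shows this tensor product embeds in $\widehat{R}$ with image $J\widehat{R}$. Taking $n$-th powers yields $\widehat{J}^n = (J\widehat{R})^n = J^n\widehat{R}$, and the same argument applied to the finitely generated module $J^n$ identifies $\widehat{J^n}$ (the $I$-adic completion of $J^n$) with $J^n\widehat{R}$, so all three expressions agree.

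Next I would apply Lemma \ref{quotients preserved} with a change of completing ideal. The $I$-adic and $I^n$-adic topologies on $R$ coincide, so $\widehat{R}$ is also the $I^n$-adic completion of $R$. Because $I \subseteq J$ implies $I^n \subseteq J^n$, Lemma \ref{quotients preserved} applies with $I^n$ in place of $I$ and $J^n$ in place of $J$, yielding $\widehat{R}/\widehat{J^n} \cong R/J^n$; combined with the identifications above, this gives $\widehat{R}/\widehat{J}^n \cong R/J^n$.

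Finally, these isomorphisms are compatible with the evident quotient maps of both inverse systems, so taking $\varprojlim_n$ produces $\varprojlim_n \widehat{R}/\widehat{J}^n \cong \varprojlim_n R/J^n$, as desired. The main technical obstacle is purely notational: carefully distinguishing the three a priori distinct objects $\widehat{J}^n$, $J^n\widehat{R}$, and $\widehat{J^n}$ and checking that they agree via flatness of $\widehat{R}$ over $R$. Once the bookkeeping is done, the rest is a formal consequence of the previous lemma.
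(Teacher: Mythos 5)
Your proposal is correct and follows the same overall strategy as the paper: establish a levelwise isomorphism $R/J^n \cong \widehat{R}/\widehat{J}^{\,n}$ compatible with the transition maps, then pass to the inverse limit. The difference is in how the levelwise isomorphism is justified. The paper writes the chain $R/J^n \cong (R/I^n)/(J^n/I^n) \cong (\widehat{R}/\widehat{I}^{\,n})/(\widehat{J}^{\,n}/\widehat{I}^{\,n}) \cong \widehat{R}/\widehat{J}^{\,n}$ and then spends its effort checking that the composite is a ring (not merely module) isomorphism and that the resulting ladder diagram commutes; it leaves implicit the identification of $\widehat{J}^{\,n}$ with the image of $J^n$ in $\widehat{R}$. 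You instead make that identification the centerpiece, using Noetherianity and flatness of $\widehat{R}$ over $R$ to establish $\widehat{J} = J\widehat{R}$ and hence $\widehat{J}^{\,n} = J^n\widehat{R} = \widehat{J^n}$, and then reduce to Lemma \ref{quotients preserved} applied to $I^n \subseteq J^n$ (valid since the $I$-adic and $I^n$-adic topologies agree). This buys a cleaner and arguably more rigorous treatment of the one genuinely delicate point --- distinguishing $\widehat{J}^{\,n}$, $J^n\widehat{R}$, and $\widehat{J^n}$ --- at the cost of being slightly more terse than the paper about the compatibility of the isomorphisms across $n$, which is nonetheless immediate from the naturality of all the maps involved.
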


\begin{proof}
    We have that
    \begin{align*}
        R/J^n
        \cong \frac{R/I^n}{J^n/I^n}
        \cong \frac{\hat{R}/\hat{I}^n}{\hat{J}^n/\hat{I}^n}
        \cong \hat{R}/\hat{J}^n.
    \end{align*}
    Denote the map induced by composing the above natural isomorphisms as $\phi_n$. 
    While the above identities are technically only valid to show that the completions are equivalent as $R$-modules, note that the first and third maps in question are each defined by taking an equivalency class to an equivalency class, such as $r + J^n \mapsto (r + I^n) + J^n/I^n$ (the first map). 
    The middle map is similar; viewing elements in the completion as sequences of ring elements, it takes an equivalency class $[[r]]$ to the equivalency class $[[(r,r,\ldots)]]$, where the double brackets denote that this is an equivalency class whose elements are themselves equivalency classes. 
    Then we have that $\phi(r_1r_2) = \phi(r_1)\phi(r_2)$ because the ring operation on equivalency classes and on sequences in the completion are defined via elementwise multiplication.
    Hence the isomorphisms are not only isomorphisms of modules, but of rings. 
    \par Now consider the diagram
    \begin{center}
        \begin{tikzcd}
        R/J \ar[d, "\phi_1"] & R/J^2 \ar[l] \ar[d, "\phi_2"] & R/J^3 \ar[l] \ar[d, "\phi_3"] & \cdots \ar[l] \\
        \hat{R}/\hat{J} & \hat{R}/\hat{J}^2 \ar[l] & \hat{R}/\hat{J}^3 \ar[l] & \cdots \ar[l] \\
        \end{tikzcd}
    \end{center}
    Note also that the horizontal maps take equivalency classes to equivalency classes. Hence the diagram commutes, as beginning at any object with the element $[r]$ and traversing arrows will always result in being in the equivalency class of $r$. From this we obtain the isomorphism
    \[
    \varprojlim R/J^n \cong \varprojlim \hat{R}/\hat{J}^n
    \]
    and so the desired result holds.
\end{proof}

\par Given a local ring $R$ and prime ideal $P \subseteq R$, we will consider the formal fiber of $P$ as the set of prime ideals $Q \in \Spec(\hat{R})$ such that $Q \cap R = P$ (here $\hat{R}$ denotes $I$-adic completion for some ideal $I$). 
While this is not quite the technical definition of a formal fiber, it is sufficient for our purposes.
The following lemma tells us that when completing a ring with respect to an ideal $I$, the formal fiber of any $P \supseteq I$ is singleton. \\

\begin{lem}\label{primes preserved}
    Let $(R, M)$ be a local ring and let $I$ be an ideal of $R$. Denote by  $\hat{R}$ the completion $\varprojlim R/I^n$. Then the prime ideals of $\hat{R}/\hat{I}$ are exactly those ideals of the form $\hat{Q}/\hat{I}$ for some prime ideal $Q \supseteq I$. 
\end{lem}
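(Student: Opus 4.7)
The plan is to apply Lemma \ref{quotients preserved} twice---once with $J = Q$ to show each $\hat{Q}/\hat{I}$ is prime, and once with $J = I$ to recover all other primes via the standard quotient correspondence. The technical wrinkle will be reconciling the ideal $\iota(Q) + \hat{I}$ (which the correspondence produces naturally, where $\iota \colon R \to \hat{R}$ denotes the canonical map) with the expression $Q\hat{R} = \hat{Q}$ that appears in the statement.

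For the forward direction, I would fix a prime $Q$ of $R$ with $Q \supseteq I$. Lemma \ref{quotients preserved} with $J = Q$ gives $\hat{R}/\hat{Q} \cong R/Q$, which is a domain, so $\hat{Q}$ is prime in $\hat{R}$. Because $\hat{I} \subseteq \hat{Q}$, the ideal $\hat{Q}/\hat{I}$ is a prime of $\hat{R}/\hat{I}$.

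For the converse, let $\mathfrak{p}$ be a prime of $\hat{R}/\hat{I}$ and let $\mathfrak{q} \subseteq \hat{R}$ be its preimage, so $\mathfrak{q}$ is prime, contains $\hat{I}$, and satisfies $\mathfrak{p} = \mathfrak{q}/\hat{I}$. Lemma \ref{quotients preserved} with $J = I$ tells us that the composite $R \to \hat{R} \to \hat{R}/\hat{I}$ is a surjection with kernel $I$, so the standard ideal correspondence yields a unique prime $Q \supseteq I$ of $R$ with $\mathfrak{q} = \iota(Q) + \hat{I}$. To finish, I would verify $\iota(Q) + \hat{I} = Q\hat{R}$, which is the main obstacle. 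The inclusion $\subseteq$ is immediate from $\iota(Q) \subseteq Q\hat{R}$ and $\hat{I} \subseteq Q\hat{R}$ (the latter holding because $I \subseteq Q$). For the reverse inclusion, the decomposition $\hat{R} = \iota(R) + \hat{I}$ (another restatement of the same surjectivity) lets me rewrite an arbitrary generator $q\hat{s} \in Q\hat{R}$ as $q(\iota(r) + j) = \iota(qr) + \iota(q) j \in \iota(Q) + \hat{I}$, and the general case follows by linearity. Hence $\mathfrak{q} = \hat{Q}$ and $\mathfrak{p} = \hat{Q}/\hat{I}$, as desired.
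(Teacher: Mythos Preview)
Your proof is correct and follows essentially the same approach as the paper: both rest on the isomorphism $R/I \cong \hat{R}/\hat{I}$ (Lemma~\ref{quotients preserved} with $J = I$) together with the standard prime correspondence for quotients. The paper states this in a single sentence, whereas you additionally verify that the correspondence carries $Q/I$ to $\hat{Q}/\hat{I}$ by checking $\iota(Q) + \hat{I} = Q\hat{R}$---a detail the paper leaves implicit.
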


\begin{proof}
    This follows from the facts that the isomorphism $R/I \to \hat{R}/\hat{I}$ induces a bijection of prime ideals, and that the prime ideals of $R/I$ are exactly those of the form $Q/I$ for some $Q \supseteq I$.
\end{proof}

% \begin{lem}\label{primes preserved}
%     Let $(R,M)$ be a local ring, $I$ an ideal of $R$. Let $P \supseteq I$ be an ideal of $R$. Denote by $\hat{R}$ the completion $\varprojlim R/I^n$. Then $\hat{P}$ is prime (respectively, maximal) if and only if $P$ is prime (respectively, maximal).
% \end{lem}

% \begin{proof}
% This follows immediately from the characterization of prime (resp. maximal) ideals by quotients, along with the isomorphism $\hat{R}/\hat{P} \cong R/P$.      
% \end{proof}

\begin{lem} \label{locality preserved}
    Let $(R,M)$ be a local ring and let $I$ be an ideal of $R$. Then $\hat{R} = \varprojlim R/I^n$ is local with maximal ideal $\hat{M}$. 
\end{lem}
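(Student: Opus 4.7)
My plan is to show that $\hat{M}$ is the unique maximal ideal of $\hat{R}$ by first verifying it is maximal, then showing $\hat{I}$ sits inside the Jacobson radical of $\hat{R}$, and finally using Lemma \ref{quotients preserved} to identify all maximal ideals of $\hat{R}$ that contain $\hat{I}$. Note first that since $M$ is the unique maximal ideal of the local ring $R$, we have $I \subseteq M$ (assuming $I \ne R$, else $\hat{R}=0$ is trivial), so $\hat{I} \subseteq \hat{M}$ makes sense and Lemma \ref{quotients preserved} applies.

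The first step is immediate: applying Lemma \ref{quotients preserved} with $J = M$ yields $\hat{R}/\hat{M} \cong R/M$, which is a field, so $\hat{M}$ is a maximal ideal of $\hat{R}$.

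The crux is showing $\hat{I} \subseteq \mathrm{Jac}(\hat{R})$. Given $x \in \hat{I}$, I would form the partial sums $s_N = \sum_{k=0}^{N} x^k \in \hat{R}$. Because $x \in \hat{I}$, for $N > M$ the difference $s_N - s_M = \sum_{k=M+1}^{N} x^k$ lies in $\hat{I}^{M+1}$, so $(s_N)$ is Cauchy in the $\hat{I}$-adic topology on $\hat{R}$. Since $\hat{R}$ is complete in this topology (a standard consequence of the Noetherian hypothesis built into "local"), the sequence converges to some $s \in \hat{R}$, and a direct computation shows $(1-x)s = 1$. Hence $1-x$ is a unit for every $x \in \hat{I}$, which places $\hat{I}$ inside the Jacobson radical of $\hat{R}$.

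Finally, I apply Lemma \ref{quotients preserved} again with $J = I$ to get $\hat{R}/\hat{I} \cong R/I$. Since $R/I$ is a quotient of the local ring $R$, it is local (in the quasi-local sense) with unique maximal ideal $M/I$; under the isomorphism, this corresponds to $\hat{M}/\hat{I}$. By the correspondence theorem, $\hat{M}$ is the unique maximal ideal of $\hat{R}$ that contains $\hat{I}$. But every maximal ideal of $\hat{R}$ contains the Jacobson radical, hence contains $\hat{I}$, so every maximal ideal of $\hat{R}$ must equal $\hat{M}$. Combined with the Noetherianness of $\hat{R}$ (the $I$-adic completion of a Noetherian ring is Noetherian), this completes the proof that $\hat{R}$ is local with maximal ideal $\hat{M}$.

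The main obstacle is the convergence argument in the second step: one needs the completeness of $\hat{R}$ not merely as an inverse limit, but with respect to the $\hat{I}$-adic topology it inherits. Everything else is a bookkeeping exercise using the already-established Lemma \ref{quotients preserved}.
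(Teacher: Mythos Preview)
Your proof is correct but takes a different route from the paper's. The paper argues directly with coordinates in the inverse limit: given $r=([r_1],[r_2],\ldots)\notin\hat M$, it shows each $r_k\notin M/I^k$, so each $r_k$ is a unit in $R$, and then checks that $([r_1^{-1}],[r_2^{-1}],\ldots)$ is a coherent sequence giving an inverse of $r$. This is completely elementary and uses nothing beyond the definition of the inverse limit and the fact that $(R,M)$ is local.

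Your argument is more structural: you reduce to showing $\hat I\subseteq\mathrm{Jac}(\hat R)$ via a geometric-series inversion, then use Lemma~\ref{quotients preserved} and the correspondence theorem to pin down the unique maximal ideal. This is clean, but the geometric-series step imports the nontrivial fact that $\hat R$ is $\hat I$-adically complete (equivalently, that $\widehat{I^n}=\hat I^{\,n}$), which in the Noetherian setting rests on Artin--Rees. You correctly flag this as the main obstacle, and it is standard, so the proof stands; it simply trades the paper's hands-on coordinate computation for a bigger off-the-shelf result. Note that one could sidestep the completeness appeal entirely: for $x\in\hat I$ each coordinate $x_n$ lies in $I\subseteq M$, so $1-x_n$ is already a unit in $R$, and the coordinatewise inverses are coherent---which is exactly the paper's style of argument applied to your Step~2.
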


\begin{proof}
    By Lemma \ref{primes preserved}, $\hat{M}$ is a maximal ideal of $\hat{R}$. Let $r = ([r_1], [r_2], [r_3], \ldots)$ be an element of $\hat{R} \subset \prod_n R/I^n$. 
    Suppose that $[r_1] \in M/I$. 
    Note that if $[r_2] \not \in M/I^2$, then $r_2 \in r_1 + I$, where $r_2 \not \in M$ and $r_1 \in M$. Since $I \subseteq M$, this is not possible. 
    A similar argument tells us that if $[r_k] \in M/I^k$, then $[r_{k+1}] \in M/I^{k + 1}$. 
    
    Suppose that $r \not \in \hat{M}$. 
    Then $[r_1] \not \in M/I$, else $[r_k] \in M/I^k$ for every $k$ and we would have $r \in \hat{M}$. 
    It follows that each $[r_k] \notin M/I^k$ for all $k$, and therefore $[r_k]$ is a unit. Then without loss of generality, we can let $r_i$ be some unit. 
    Furthermore, the condition $r_j \equiv r_i \, (\text{mod} \, I^i)$ for $j > i$ implies that $r_j^{-1} \equiv r_i^{-1} \, (\text{mod} \, I^i)$ for $j > i$. 
    Hence $([r_1^{-1}], [r_2^{-1}], [r_3^{-1}], \ldots)$ is in $\hat{R}$ and is an inverse for $r$. Thus, $r \not \in \hat{M}$ implies $r$ is a unit, so $\hat{M}$ is the unique maximal ideal of $\hat{R}$. 
\end{proof}

\begin{thm}
    Let $(R,M)$ be a local ring whose $M$-adic completion is $T$. Let $I$ be an ideal of $R$. Denote by ``$\, \hat{} \,$" the $I$-adic completion. Then $(\widehat{R},\widehat{M})$ is a local ring whose $\hat{M}$-adic completion is $T$. 
\end{thm}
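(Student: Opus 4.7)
The plan is to assemble this theorem directly from the three preceding lemmas; almost no new content is required. First I would check that $\widehat{R}$ qualifies as a local ring in the sense of the paper, i.e., Noetherian with a unique maximal ideal. Since $(R,M)$ is local and $I$ is a proper ideal (the case $I=R$ giving the trivial ring is degenerate), we have $I \subseteq M$, so Lemma \ref{locality preserved} applies and tells us that $\widehat{R}$ is quasi-local with maximal ideal $\widehat{M}$. That $\widehat{R}$ is Noetherian follows from the standard fact that the $I$-adic completion of a Noetherian ring is Noetherian, so $\widehat{R}$ is in fact local.

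Next I would identify the $\widehat{M}$-adic completion of $\widehat{R}$ with $T$. Applying Lemma \ref{completion preserved} with $J = M$ (using that $I \subseteq M$ and that $R$ is Noetherian), we obtain
$$\varprojlim_n \widehat{R}/\widehat{M}^{\,n} \;\cong\; \varprojlim_n R/M^n \;=\; T,$$
where the final equality is simply the definition of $T$ as the $M$-adic completion of $R$. The proof of Lemma \ref{completion preserved} already records that the intermediate isomorphisms respect the ring structure (not only the $R$-module structure), so this identification is one of rings.

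The main obstacle, such as it is, is purely bookkeeping: one must verify that the hypotheses of Lemmas \ref{locality preserved} and \ref{completion preserved} are met ($I \subseteq M$, $R$ Noetherian, so that $\widehat{M}$ makes sense and behaves well), and then combine their outputs. Since neither step requires any new construction, the theorem is essentially a corollary of these two lemmas together with the standard preservation of Noetherianness under $I$-adic completion.
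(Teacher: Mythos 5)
Your proposal is correct and follows exactly the paper's route: the paper's proof is a one-line citation of Lemma \ref{completion preserved} (with $J = M$) together with Lemma \ref{locality preserved}, which is precisely the assembly you carry out. Your added remarks on Noetherianness and on $I \subseteq M$ are just the bookkeeping the paper leaves implicit.
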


\begin{proof}
    This follows from Lemma \ref{completion preserved} (using $J = M$) and from Lemma \ref{locality preserved}.
\end{proof}

As this section is devoted to demonstrating the existence of interesting cardinality structures of spectra of precompletions, we first note that there are some structures which are in general impossible; the following two propositions both tell us that spectra cannot be ``top-heavy.'' \\
\begin{prop}\label{top-heavy ring}
    Let $R$ be a Noetherian ring such that $\Spec(R)$ is infinite.
    % Denote by $\Spec_n(R)$ the subset of $\Spec(R)$ defined by $\Spec_n(R) = \{ P \in \Spec(R) \mid \textup{ht}(P) \le n \}$. 
    Then $|\Spec(R)| = |\{ P \in \Spec(R) \mid \textup{ht}(P) = 1 \}|$. 
\end{prop}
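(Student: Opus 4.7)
My plan is to proceed by induction on $\dim R$. Since the inequality $|\{P \in \Spec(R) \mid \textup{ht}(P) = 1\}| \leq |\Spec(R)|$ is immediate, the real task is to bound the cardinality of the set $\Psi$ of primes of height $\geq 2$ by that of $\Phi := \{P \in \Spec(R) \mid \textup{ht}(P) = 1\}$. The base cases $\dim R \leq 1$ are clear because $\Spec(R)$ is then the disjoint union of the finite set of minimal primes and $\Phi$.

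First I would establish the following preliminary fact: every prime $P$ of height $\geq 2$ in a Noetherian ring contains infinitely many height 1 primes. Localizing at $P$, suppose $R_P$ has only finitely many height 1 primes $Q_1, \ldots, Q_n$; each $Q_i$ and each associated prime of $R_P$ is strictly contained in $PR_P$ (heights differ), so prime avoidance produces a nonzerodivisor $x \in PR_P$ lying outside every $Q_i$. By Krull's principal ideal theorem some prime minimal over $(x)$ has height $1$, but it must equal some $Q_i$, contradicting $x \notin Q_i$. The height-preserving correspondence between primes of $R_P$ and primes of $R$ contained in $P$ then transfers the statement back to $R$.

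Now assume $\dim R = d \geq 2$ and $\Spec(R)$ is infinite. Applying the preliminary fact to any maximal ideal of height $d$ shows $\Phi$ is itself infinite. The central lemma I need is $|\Spec(R/Q)| \leq |\Phi|$ for every $Q \in \Phi$. Since $\dim(R/Q) \leq d-1$, the inductive hypothesis says $\Spec(R/Q)$ is either finite or has cardinality equal to the number of its height 1 primes. In the infinite case, I would define a map $\sigma$ from the height 1 primes of $R/Q$ into $\Phi \setminus \{Q\}$ by sending $P/Q$ to some height 1 prime $Q' \neq Q$ of $R$ with $Q' \subseteq P$ (existing by the preliminary fact, since such $P$ has height $\geq 2$ in $R$). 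Each fiber $\sigma^{-1}(Q')$ is finite: any $P$ in it makes $P/Q$ a height 1 prime of the domain $R/Q$ containing the nonzero ideal $(Q+Q')/Q$, so $P/Q$ must be one of the finitely many primes minimal over $(Q+Q')/Q$. Hence $|\Spec(R/Q)| \leq \aleph_0 \cdot |\Phi| = |\Phi|$.

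To finish, for each $P \in \Psi$ I pick some $Q_P \in \Phi$ with $Q_P \subseteq P$ and observe that $P \mapsto (Q_P, P/Q_P)$ is an injection of $\Psi$ into $\bigcup_{Q \in \Phi}(\{Q\} \times \Spec(R/Q))$, yielding
\[
|\Psi| \;\leq\; \sum_{Q \in \Phi} |\Spec(R/Q)| \;\leq\; |\Phi|\cdot|\Phi| \;=\; |\Phi|.
\]
Combined with the finiteness of the set of minimal primes, this gives $|\Spec(R)| = |\Phi|$. The principal obstacle in this plan is establishing the finite-to-one property of $\sigma$; the rest is routine cardinal arithmetic once the preliminary fact is in hand.
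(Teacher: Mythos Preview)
Your argument is sound when $\dim R < \infty$, but the induction on $\dim R$ is a genuine gap: Noetherian rings can have infinite Krull dimension (Nagata's examples), and for such $R$ your induction never gets off the ground. The paper avoids this by inducting instead on the height $n$, proving $|\Spec_{\le n}(R)| = |\Spec_1(R)|$ for every $n \ge 1$ (where $\Spec_{\le n}(R) = \{P : \textup{ht}(P) \le n\}$). The inductive step is that any prime $P$ of height $n+1$ lies over two distinct primes $Q_1, Q_2$ with each $Q_i \subsetneq P$ saturated, and $P$ is then minimal over $Q_1 + Q_2$; hence the height-$(n{+}1)$ primes number at most $|\Spec_{\le n}(R)|^2 \cdot \aleph_0$. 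Since every prime in a Noetherian ring has finite height, $\Spec(R) = \bigcup_n \Spec_{\le n}(R)$ and the conclusion follows even when $\dim R = \infty$. Your approach can be repaired along the same lines --- indeed your finite-to-one map $\sigma$ already encodes the same ``minimal over $Q + Q'$'' mechanism --- but as written it does not cover the general statement.

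A smaller slip: in your preliminary fact you assert that every associated prime of $R_P$ is strictly contained in $PR_P$, but $PR_P$ can itself be an associated prime (e.g.\ $R_P = k[x,y,z]_{(x,y,z)}/(x^2,xy,xz)$, where $\textup{Ann}(x) = (x,y,z)$ and $\dim R_P = 2$). You do not actually need a nonzerodivisor; it suffices to choose $x \in PR_P$ avoiding the finitely many \emph{minimal} primes of $R_P$, which already forces any prime minimal over $(x)$ to have height exactly~$1$.
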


\begin{proof}
    Denote by $\Spec_{\le n}(R)$ the subset of $\Spec(R)$ defined by $\Spec_{\le n}(R) = \{ P \in \Spec(R) \mid \textup{ht}(P) \le n \}$, and by $\Spec_1(R)$ the set of height-$1$ prime ideals. 
    We will show that for each $n \geq 1$, $|\Spec_{\le n}(R)| = |\Spec_1(R)|$.
    This will show that $|\Spec(R)| = |\Spec_1(R)|$.
    We proceed by induction on $n$. The base case $n = 1$ is evident.
    Let $P$ be a height $n + 1$ prime ideal. Let $Q_1, Q_2$ be prime ideals contained in $P$ such that the chains $Q_1 \subsetneq P$ and $Q_2 \subsetneq P$ are saturated. Then $P$ is a minimal prime ideal over $Q_1 + Q_2$. 
    Hence we have that
    \[
    \Spec_{\le n+1}(R) \backslash \Spec_{\le n}(R) \subseteq  \bigcup_{Q_1, Q_2 \in \Spec_{\le n}(R)}\Min(Q_1 + Q_2)
    \]
    Since the number of minimal prime ideals over any given ideal is finite, the number of prime ideals in the union on the right is bounded by $|\Spec_{\le n}(R)|^2 \cdot \aleph_0$. 
    It follows that  $|\Spec_{\le n + 1}(R)| = |\Spec_{\le n}(R)|$. 
    The desired statement follows.
\end{proof}

\begin{prop}\label{top-heavy prime}
    Let $R$ be a Noetherian ring and let $P$ be a prime ideal with $\textup{ht}(P) \ge 2$. Then $|\Spec(R_P)| \ge |R/P|$.
\end{prop}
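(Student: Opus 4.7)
The plan is to reduce the statement to a lower bound on $|\Spec(R_P)|$ in terms of the residue field $R_P/PR_P$, and then invoke Remark \ref{bounds}. The natural stepping stone between $R/P$ and $\Spec(R_P)$ is the residue field $k(P) = R_P/PR_P$: one shows $|R/P| \leq |k(P)| \leq |\Spec(R_P)|$, with the first inequality from an embedding and the second from the remark.

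First I would localize: $R_P$ is a Noetherian local ring with maximal ideal $PR_P$, and $\dim R_P = \textup{ht}(P) \geq 2$. The natural ring map $R/P \to R_P/PR_P$ given by $r + P \mapsto r/1 + PR_P$ is injective, since $r/1 \in PR_P$ implies $sr \in P$ for some $s \in R \setminus P$, whence $r \in P$ by primeness. This immediately gives $|R/P| \leq |R_P/PR_P|$.

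Next I would apply Remark \ref{bounds} to the precompletion $A = R_P$ of its own $PR_P$-adic completion $T = \widehat{R_P}$. Here $T$ is a complete local Noetherian ring with $\dim T = \dim R_P \geq 2$, and the maximal ideal $M$ of $T$ satisfies $M \cap R_P = PR_P$ and $T/M \cong R_P/PR_P$. The remark then yields $|R_P/PR_P| = |T/M| \leq |\Spec(R_P)|$. Chaining this with the previous inequality gives $|R/P| \leq |\Spec(R_P)|$, which is exactly the desired bound.

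There is no significant obstacle once Remark \ref{bounds} is identified as the right tool; the only point to verify is the dimension hypothesis of the remark, which holds because completion preserves dimension for Noetherian local rings, so $\dim \widehat{R_P} = \dim R_P \geq 2$. If one wanted to avoid citing the remark, the same argument could be carried out directly from Theorem \ref{Sharp 2.6} by taking $\mathfrak{a} = PR_P$ and letting $\{\mathfrak{b}_i\}$ range over the height-one primes of $R_P$, noting that $PR_P$ is contained in the union of these height-one primes (via Krull's principal ideal theorem applied after quotienting by minimal primes) but cannot be contained in any single one, forcing $|R_P/PR_P| \leq |\{\mathfrak{b}_i\}| \leq |\Spec(R_P)|$.
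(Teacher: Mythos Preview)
Your proof is correct but takes a different route from the paper's. The paper argues directly in $R$: after reducing to the domain case by quotienting by a minimal prime, it picks a nonzero $r$ in a prime strictly between $(0)$ and $P$, an element $s \in P$ avoiding every minimal prime over $(r)$, and shows that the elements $r + u_i s$ (as $u_i$ ranges over coset representatives of $R/P$) lie in pairwise distinct height-one primes contained in $P$. Your approach instead localizes, embeds $R/P$ into $k(P)$, and invokes the generalized prime avoidance already packaged in Remark~\ref{bounds} (equivalently Theorem~\ref{Sharp 2.6}) to bound $|k(P)| \le |\Spec(R_P)|$. The two are closely related under the hood---the paper's two-element trick is essentially the two-generator case of the Vandermonde argument in Theorem~\ref{Sharp 2.6}---so your route mainly buys economy by citing the earlier result rather than re-running it, while the paper's route is self-contained and never needs to mention completion (indeed your detour through $T = \widehat{R_P}$ is cosmetic; Theorem~\ref{Sharp 2.6} applies directly to the local ring $R_P$). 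One small point you glide over: Theorem~\ref{Sharp 2.6} assumes the residue field is infinite, so strictly speaking the finite-$k(P)$ case needs a one-line remark that $\dim R_P \ge 2$ forces $\Spec(R_P)$ to be infinite; the paper's Remark~\ref{bounds} is equally silent on this, so your citation is fair.
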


\begin{proof}
    First, suppose that $R$ is a domain, and let $(0) \subsetneq Q_2 \subsetneq P$ be a chain of prime ideals.
    Consider a nonzero $r \in Q_2$, and $s \in P$ such that $s \not \in Q$ for any $Q \in \Min(r)$. Let $\{u_i\}$ be a full set of coset representatives for $R/P$. 
    Consider the prime ideals of $R$ which are minimal over elements of the form $r + u_is$. 
    Note that two such elements cannot be in the same height one prime ideal $P' \subseteq P$; if $r + u_is$ and $r + u_ks$ were both in $P'$, then we would have $(u_i - u_k)s \in P' \subseteq P$.
    Since $u_i - u_k \not \in P$, then we would have $s \in P'$ and $r \in P'$, which contradicts the fact that $s$ is not in any prime ideals minimal over $r$. 
    Note also that every such element is in some height one prime ideal $P'\subseteq P$, by the principal ideal theorem and the fact that $\text{ht}(P)\ge 2$. 
    Hence, the number of prime ideals $P'$ for which $(0) \subsetneq P' \subsetneq P$ is at least $|R/P|$, which gives the desired result.
    \par In the case that $R$ is not a domain, let $Q_1 \subsetneq Q_2 \subsetneq P$ be a chain of prime ideals. By the domain case above, there are at least $|R/P|$ prime ideals $P'/Q_1$ of $R/Q_1$ such that $(0) \subsetneq P'/Q_1 \subsetneq P/Q_1$, which yields the desired result. 
\end{proof}

%Included in section 3
%\begin{lem} \label{Wiegand Argument}
    %Let $(A,M)$ be a local ring and $I$ an ideal of $A$ such that $A$ is complete with respect to $I$. Let $Q_1 \subsetneq Q_2 \subsetneq Q_3$ be a chain of prime ideals of $A$ such that $I \nsubseteq Q_1$. Then there are $|A/M|^{\aleph_0}$ prime ideals $Q'$ with $Q_1 \subsetneq Q' \subsetneq Q_3$.
%\end{lem}

Using partial completions and Lemma \ref{Wiegand Argument}, we can now construct rings where certain subsets of the spectrum have different cardinalities. The most familiar example of distinct possible cardinalities is \{countable, uncountable\}, but we state all of our results in full generality. That is, all of the theorems we present apply to cardinalities outside of the countable and uncountable cardinalities as well. \\

\begin{thm}\label{partial completion theorem}
    Let $(R, M)$ be a local ring with $M$-adic completion $T$ and let $I$ be an ideal of $R$.
    Then there exists a faithfully flat local extension ring of $(S, N)$ of $R$ whose completion at $N$ is also equal to $T$, such that $S/IS \cong R/I$ (and in particular $\Spec(S/IS) \cong \Spec(R/I)$) and such that, for any chain $Q_1 \subsetneq Q_2 \subsetneq Q_3$ of prime ideals of $S$ such that $IS \nsubseteq Q_1$, there are $|\Spec(T)|$ prime ideals $Q'$ with $Q_1 \subsetneq Q' \subsetneq Q_3$.
\end{thm}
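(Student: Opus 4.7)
The plan is to let $S$ be the $I$-adic completion $\varprojlim R/I^n$ of $R$, and then to verify the four required properties in sequence. First I would note that $S$ is faithfully flat over $R$: this is the standard fact that the $I$-adic completion of a Noetherian local ring along an ideal contained in its maximal ideal (hence its Jacobson radical) is faithfully flat. Next I would invoke the unlabeled theorem immediately preceding this one, which establishes that $(S, \widehat{M})$ is local with $\widehat{M}$-adic completion equal to $T$; set $N = \widehat{M}$. The isomorphism $S/IS \cong R/I$ then follows immediately from Lemma \ref{quotients preserved} with $J = I$, and the corresponding bijection $\Spec(S/IS) \cong \Spec(R/I)$ is an immediate consequence.

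For the chain statement, the key observation is that by construction $S$ is complete with respect to $IS$. Given any chain $Q_1 \subsetneq Q_2 \subsetneq Q_3$ of prime ideals of $S$ with $IS \nsubseteq Q_1$, I would apply Lemma \ref{Wiegand Argument} with ambient ring $S$, maximal ideal $N$, and defining ideal $IS$ to produce $|S/N|^{\aleph_0}$ prime ideals $Q'$ with $Q_1 \subsetneq Q' \subsetneq Q_3$. Since the $N$-adic completion of $S$ is $T$, we have $S/N \cong T/MT$, so $|S/N|^{\aleph_0} = |T/MT|^{\aleph_0}$. The existence of a chain of length three in $\Spec(S)$ forces $\dim(S) \ge 2$, and since completion preserves dimension we also get $\dim(T) \ge 2$; Theorem \ref{spec dim2} then yields $|T/MT|^{\aleph_0} = |\Spec(T)|$, completing the count. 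If no such chain exists in $\Spec(S)$, the statement holds vacuously.

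Honestly there is no deep obstacle here: the preservation lemmas built up earlier in the section, together with Lemma \ref{Wiegand Argument} and Theorem \ref{spec dim2}, do essentially all of the work. The only point requiring a moment of care is the identification of the cardinality $|S/N|^{\aleph_0}$ delivered by Lemma \ref{Wiegand Argument} with $|\Spec(T)|$, which is where the dimension bookkeeping (and the harmless vacuous case) matters.
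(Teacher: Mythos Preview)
Your proposal is correct and follows essentially the same approach as the paper: take $S$ to be the $I$-adic completion of $R$, cite the preservation lemmas for the local/completion/quotient claims, and then apply Lemma \ref{Wiegand Argument} together with the identification $|S/N|^{\aleph_0} = |T/MT|^{\aleph_0} = |\Spec(T)|$. Your treatment is in fact slightly more careful than the paper's, in that you make the dimension bookkeeping (and the vacuous case) explicit and invoke Theorem \ref{spec dim2} for the last equality, whereas the paper simply appeals to Lemma \ref{Wiegand Argument} again.
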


\begin{proof}
    Let $S =\hat{R}$ be the completion of $R$ at $I$. The first claim follows from the isomorphism $\hat{R}/\hat{I} \cong R/I$. The second follows from Lemma \ref{Wiegand Argument} and the fact that $|S/N|^{\aleph_0} = |T/MT|^{\aleph_0} = |\Spec(T)|$. The equality $|T/MT|^{\aleph_0} = |\Spec(T)|$ itself follows from Lemma \ref{Wiegand Argument}.
\end{proof}

\begin{thm}\label{partial completion avoidance}
    Let $(R, M)$ be a local ring with $\textup{dim}(R) \ge 2$ and $M$-adic completion $T$, and $P_1, \ldots ,P_n$ incomparable prime ideals of $R$.
    Then there exists a faithfully flat extension ring $(S, N)$ whose completion at $N$ is also equal to $T$, such that $S/P_iS \cong R/P_i$ and such that for any chain $Q_1 \subsetneq Q_2 \subsetneq Q_3$ of prime ideals of $S$ such that $P_iS \nsubseteq Q_1$ for all $i$, there are $|\Spec(T)|$ prime ideals $Q'$ with $Q_1 \subsetneq Q' \subsetneq Q_3$. In particular, if $|\Spec(R/P_i)| < |\Spec(T)|$, then $|\Spec(S/P_iS)| < |\Spec(T)|$. 
\end{thm}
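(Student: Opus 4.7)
The plan is to reduce this theorem to Theorem \ref{partial completion theorem} by taking the ideal $I$ there to be $I = P_1 \cap P_2 \cap \cdots \cap P_n$, and letting $S = \hat{R}$ be the $I$-adic completion of $R$. Since $I \subseteq P_i$ for each $i$, this choice puts us in position to apply Lemma \ref{quotients preserved} individually to each $P_i$.

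Applying Theorem \ref{partial completion theorem} directly yields the general structural properties of $S$: it is a faithfully flat local extension of $R$ whose $N$-adic completion is $T$, and any chain $Q_1 \subsetneq Q_2 \subsetneq Q_3$ of prime ideals of $S$ with $IS \nsubseteq Q_1$ admits $|\Spec(T)|$ intermediate primes. For the isomorphism $S/P_iS \cong R/P_i$, I would invoke Lemma \ref{quotients preserved} with $J = P_i$, using flatness of $S$ over the Noetherian ring $R$ together with finite generation of $P_i$ to identify the completion $\hat{P_i}$ appearing there with the extended ideal $P_iS$.

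The heart of the argument is then to verify that the hypothesis $P_iS \nsubseteq Q_1$ for every $i$ forces $IS \nsubseteq Q_1$, so that Theorem \ref{partial completion theorem} applies and produces the promised intermediate primes. Since $S$ is flat over $R$, extension of ideals commutes with finite intersections, giving $IS = P_1S \cap P_2S \cap \cdots \cap P_nS$. For each $i$ I would choose $x_i \in P_iS \setminus Q_1$; then the product $x_1 x_2 \cdots x_n$ lies in $\prod_{i} P_iS \subseteq \bigcap_{i} P_iS = IS$, and since $Q_1$ is prime and no $x_i$ is in $Q_1$, the product is not in $Q_1$, so $IS \nsubseteq Q_1$ as required. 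The final ``in particular'' statement is immediate from $\Spec(S/P_iS) \cong \Spec(R/P_i)$.

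The main obstacle I anticipate is the technical but standard identification $\hat{P_i} = P_i\hat{R}$, which is what lets us translate between the module-theoretic completion appearing in Lemma \ref{quotients preserved} and the extended ideal inside $S$; the incomparability hypothesis on the $P_i$ itself plays no essential role in the proof, serving mainly to eliminate redundancy in the data. Once that identification is in hand, the whole argument becomes a short prime-avoidance-style reduction to Theorem \ref{partial completion theorem}.
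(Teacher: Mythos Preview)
Your proposal is correct and follows essentially the same route as the paper: set $I = P_1 \cap \cdots \cap P_n$, apply Theorem \ref{partial completion theorem}, invoke Lemma \ref{quotients preserved} for the quotient isomorphisms, and show $IS \nsubseteq Q_1$ via a product-of-elements argument using primality of $Q_1$. The only difference is that the paper chooses each $x_i$ in $P_i \subseteq R$ (so that $\prod x_i \in \prod P_i \subseteq I$ directly), which sidesteps your flatness-and-intersection detour; your observation that the incomparability hypothesis is not actually used matches the paper's silence on that point.
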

\begin{proof}
    Apply Theorem \ref{partial completion theorem} with $I = P_1 \cap \ldots \cap P_n$ and note that the arguments of that theorem still apply for each $P_i$: the identity $S/P_iS \cong R/P_i$ follows from Lemma \ref{quotients preserved}. 
    Note that if $P_iS \not \subseteq Q_1$ for all $i$, then there exist elements $x_1, \ldots ,x_n$ of $R$ such that $x_i \in P_i$ and $x_i \not\in Q_1$ for every $i$.  Therefore $x = \prod_{i = 1}^n x_i \in IS$ and $x \not\in Q_1$, and so $IS \not\subseteq Q_1$.
    \end{proof}

The next example demonstrates a particular application of this theorem.

\begin{ex}\label{domain partial completion example}
Let $T = \mathbb{Q}[[x,y,z,w]]/(xy)$. By Corollary \ref{countable domain thm}, $T$ has a precompletion $A$ which is a countable domain. 
Since $\text{dim}(T) = 3$ and it is equidimensional, we may pick $n$ height one (and therefore coheight two) prime ideals $P_1, \ldots, P_n$ in $A$ and denote by $R$ the completion of $A$ at $I = P_1\cap \ldots \cap P_n$; note $R$ is also a precompletion of $T$. 
This precompletion has the property that there are exactly $n$ prime ideals $P$ of height one such that $|\Spec(R/P)| = \aleph_0$, these being the ideals $P_iR$.
For all other height-one prime ideals $P$, $|\Spec(R/P)| =  \aleph_0^{\aleph_0} = \mathfrak{c}$ by Lemma \ref{Wiegand Argument}.
\end{ex}

\par Aside from Theorem \ref{partial completion avoidance}, partial completions actually have more utility outside of the domain case: it turns out we can use them to find a large class of precompletions with ``unbalanced spectrum,'' answering the question posed at the beginning of this section.
We first need an appropriate precompletion to start with: in \cite{monster}, the authors are able to create a precompletion with cardinality $\sup(\aleph_0, |T/M|)$\footnote{This is basically just shorthand for: ``$|T/M|$ when $T/M$ is infinite, and $\aleph_0$ when $T/M$ is finite.''} and such that the minimal formal fibers are prescribed, for ``most" complete local rings $T$.
This lends itself to our most interesting use of partial completions. \\

\begin{thm} \label{monster and partial}
    Let $(T,M)$ be a complete local ring with $\dim(T) \ge 2$ and $|T/M| < |T|$.
    Suppose that $T$ has exactly $n$ minimal prime ideals $\{Q_1, \ldots, Q_n\}$, and let $\{\C_1,...,\C_m\}$ be a partition of the minimal prime ideals (that is, each $\C_i$ is disjoint from $\C_j$ for $i\neq j$ and  $\bigcup_{i \le m} \C_i = \Min(T)$). 
    Suppose this partition also satisfies the conditions of \monster in \cite{monster} (as a `minfeasible partition').
    Let $k \le m$ be a natural number, and furthermore suppose that $\C_i = \{Q_i\}$ for $i \le k$ and that each $\C_i$ contains a minimal prime ideal $Q$ such that $\cht(Q) \ge 2$.
    Then there is a local ring $R$ with $\hat{R} = T$, such that $R$ has exactly $m$ minimal prime ideals and such that $k$ of these minimal prime ideals are contained in $|\Spec(T)|$ prime ideals, while the other $m - k$ are contained in $\sup(\aleph_0, |T/M|)$ prime ideals.
\end{thm}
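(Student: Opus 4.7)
The plan is to combine Theorem 3.15 of \cite{monster} with the partial completion technique developed earlier in this section. First, apply Theorem 3.15 of \cite{monster} to the minfeasible partition $\{\C_1,\dots,\C_m\}$ to produce a precompletion $A$ of $T$ with $|A|=\sup\{\aleph_0,|T/M|\}$ and exactly $m$ minimal prime ideals $P_1,\dots,P_m$ for which $\C_i$ is precisely the set of minimal primes of $T$ contracting to $P_i$ in $A$. Let $I=\bigcap_{i>k}P_i$ and let $R$ be the $I$-adic completion of $A$. The theorem stated just before Proposition \ref{top-heavy ring} ensures that $R$ is local with $\widehat{M}$-adic completion equal to $T$; in particular $R/N\cong T/M$ and $\dim R=\dim T\ge 2$.

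For the $m-k$ ``small'' minimal primes (case $i>k$): since $I\subseteq P_i$, Lemma \ref{quotients preserved} gives $R/P_iR\cong A/P_i$, so $P_iR$ is prime in $R$ and $|\Spec(R/P_iR)|\le|A|=\sup\{\aleph_0,|T/M|\}$. For the $k$ ``large'' minimal primes (case $i\le k$): set $P'_i=Q_i\cap R$, which is a minimal prime of $R$ because $Q_i$ is a minimal prime of $T$ and $R\hookrightarrow T$ is faithfully flat. Since $\C_i=\{Q_i\}$ is the unique minimal element of the formal fiber of $P_i$ in $T$, a parallel going-down argument shows $Q_i$ is the unique minimal prime of $T$ over $P'_iT$, so $\sqrt{P'_iT}=Q_i$. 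Then $T/P'_iT$, being the $\widehat{M}$-adic completion of $R/P'_i$, has dimension $\dim(T/Q_i)=\cht(Q_i)\ge 2$, yielding a chain of length three in $R$ beginning at $P'_i$. Incomparability of the $P_j$'s yields $I\nsubseteq P_i$, hence $IR\nsubseteq P'_i$, so Lemma \ref{Wiegand Argument} produces $|R/N|^{\aleph_0}=|T/M|^{\aleph_0}=|\Spec(T)|$ prime ideals above $P'_i$.

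The main obstacle is verifying that $R$ has exactly $m$ minimal prime ideals. Faithful flatness of $R\to T$ guarantees that every minimal prime of $R$ is the contraction to $R$ of some minimal prime of $T$, so it suffices to show that all elements of each $\C_i$ contract to a common prime of $R$ and that these $m$ common values are distinct. For $i\le k$ this is immediate from $|\C_i|=1$. For $i>k$, the bijection $\Spec(R/P_iR)\leftrightarrow\Spec(A/P_i)$ of Lemma \ref{quotients preserved} forces every minimal prime of $R$ lying above $P_i$ in $A$ to correspond to $P_i$ itself, hence to equal $P_iR$. Distinctness of the resulting $m$ primes follows since they contract to the $m$ distinct $P_j$'s in $A$, completing the verification.
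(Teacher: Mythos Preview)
Your argument follows the same route as the paper's proof: build the small precompletion $A$ from \cite{monster} with minimal primes $P_1,\dots,P_m$ matching the partition, take $R$ to be the $I$-adic completion of $A$ for $I=\bigcap_{i>k}P_i$, and then read off the two cardinalities using Lemma~\ref{quotients preserved} for the primes containing $I$ and Lemma~\ref{Wiegand Argument} for the rest. Your count of the minimal primes of $R$, via faithful flatness of $R\to T$ together with the bijection $\Spec(R/P_iR)\cong\Spec(A/P_i)$, is a clean rephrasing of the paper's formal-fiber argument using Lemma~\ref{primes preserved}.

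One small gap: for $i>k$ you only record the upper bound $|\Spec(R/P_iR)|\le|A|=\sup\{\aleph_0,|T/M|\}$, but the theorem asserts equality. The missing lower bound comes from the hypothesis that each $\C_i$ contains some $Q$ with $\cht(Q)\ge 2$: this forces $\dim(A/P_i)=\dim(T/P_iT)\ge 2$, and then the prime-avoidance argument of Remark~\ref{bounds} (together with ordinary prime avoidance when $T/M$ is finite) applied to $A/P_i$ gives $|\Spec(A/P_i)|\ge\sup\{\aleph_0,|T/M|\}$. The paper handles this by noting that the $q_i$ all have coheight at least $2$ in $A$ before invoking Theorem~\ref{partial completion avoidance}.
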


\begin{proof}
    Use Theorem 3.19 in \cite{monster} to construct a subring $(A,A \cap M)$ of $T$ with $(M \cap A)$-adic completion $T$ such that $|A| = \sup(\aleph_0, |T/M|)$ and $\Min(A) = \{ q_1,...,q_m\}$, and the elements of $\C_i$ are in the formal fiber of $q_i$. 
    Note that the formal fibers of $q_1,...,q_k$ each have one minimal element. 
    Then $A$ has $m$ minimal prime ideals, all of coheight at least $2$, and $|\Spec(A)| = \sup(\aleph_0, |T/M|)$ by Lemma \ref{Sharp 2.6} (and ordinary prime avoidance, if $T/M$ is finite/countable).
    \par Then use Theorem \ref{partial completion avoidance} with the set of incomparable prime ideals $ \{q_{k+1}, \ldots , q_m\}$ to produce a faithfully flat local extension ring $R$ of $A$ such that $\hat{R} = T$.
    Observe that the completion map $A \to T$ factors through $R$.
    Then, because the formal fiber of the ideals $q_1,...,q_k$ with respect to the map $A \to T$ has one minimal element, the formal fiber of each with respect to the map $A \to R$ has one minimal element. 
    The formal fiber of each of the ideals $q_{k+1},...,q_m$ is singleton (with respect to the map $A \to R$), by Lemma \ref{primes preserved}.
    Therefore, $R$ has exactly $m$ minimal prime ideals. 
    The statement about the cardinalities of the varieties follows from Theorem \ref{partial completion avoidance} and the fact that $|\Spec(A)| = \sup(\aleph_0, |T/M|)$.
\end{proof}
Theorem \ref{monster and partial} tells us that given a complete local ring $(T,M)$ such that $\dim(T) \ge 2$, $|T/M| < |T|$, T has at least $m$ minimal prime ideals of coheight $\ge 2$, and some partition of these satisfies the (fairly weak) conditions of \monster in \cite{monster}, $T$ has a precompletion $R$ such that we can order-embed the following diagram into $\Spec(R)$:
\newpage 
{\centering
    \hspace{.75cm}
    \begin{figure*}[h!]
        \centering
        \begin{tikzpicture}[remember picture]
            \node[label=above:$M$] (max) at (6.5, 3) {};
            \node[minimum size=0.5cm] (A1) at (1, 1) {};
            \node[minimum size=0.5cm,label=center:$\cdots$] (dots) at (3, 1) {};
            \node[minimum size=0.5cm] (B1) at (5, 1) {};
            
            \node[minimum size=0.5cm,label=center:$\vdots$] (Adots) at (1, 0) {};
            \node[minimum size=0.5cm,label=center:$\cdots$] (dots) at (3, 0) {};
            \node[minimum size=0.5cm,label=center:$\vdots$] (Bdots) at (5, 0) {};
            
            \node[minimum size=0.5cm] (A2) at (1, -1) {};
            \node[minimum size=0.5cm,label=center:$\cdots$] (dots) at (3, -1) {};
            \node[minimum size=0.5cm] (B2) at (5, -1) {};
            
            \node[minimum size=0.5cm,label=below:$Q_1$] (Amin) at (1, -2) {};
            \node[minimum size=0.5cm,label=center:$\cdots$] (dots) at (3, -2) {};
            \node[minimum size=0.5cm,label=below:$Q_k$] (Bmin) at (5, -2) {};
            
            \foreach \n in {max, A1, A2, Amin, B1, B2, Bmin}
                \node at (\n)[circle,fill,inner sep=1.5pt]{};
            
            \foreach \n in {A1,B1}
                \path [-] (\n) edge node[left] {} (max);
            
            \path [-] (A1) edge node[left] {} (Adots);
            \path [-] (Adots) edge node[left] {} (A2);
            \path [-] (A2) edge node[left] {} (Amin);
            
            \path [-] (B1) edge node[left] {} (Bdots);
            \path [-] (Bdots) edge node[left] {} (B2);
            \path [-] (B2) edge node[left] {} (Bmin);
                
            \node[minimum size=0.5cm] (A1) at (8, 1) {};
            \node[minimum size=0.5cm,label=center:$\cdots$] (dots) at (10, 1) {};
            \node[minimum size=0.5cm] (B1) at (12, 1) {};
            
            \node[minimum size=0.5cm,label=center:$\vdots$] (Adots) at (8, 0) {};
            \node[minimum size=0.5cm,label=center:$\cdots$] (dots) at (10, 0) {};
            \node[minimum size=0.5cm,label=center:$\vdots$] (Bdots) at (12, 0) {};
            
            \node[minimum size=0.5cm] (A2) at (8, -1) {};
            \node[minimum size=0.5cm,label=center:$\cdots$] (dots) at (10, -1) {};
            \node[minimum size=0.5cm] (B2) at (12, -1) {};
            
            \node[minimum size=0.5cm,label=below:$Q_{k+1}$] (Amin) at (8, -2) {};
            \node[minimum size=0.5cm,label=center:$\cdots$] (dots) at (10, -2) {};
            \node[minimum size=0.5cm,label=below:$Q_{m}$] (Bmin) at (12, -2) {};
            
            \foreach \n in {max, A1, A2, Amin, B1, B2, Bmin}
                \node at (\n)[circle,fill,inner sep=1.5pt]{};
            
            \foreach \n in {A1,B1}
                \path [-] (\n) edge node[left] {} (max);
            
            \path [-] (A1) edge node[left] {} (Adots);
            \path [-] (Adots) edge node[left] {} (A2);
            \path [-] (A2) edge node[left] {} (Amin);
            
            \path [-] (B1) edge node[left] {} (Bdots);
            \path [-] (Bdots) edge node[left] {} (B2);
            \path [-] (B2) edge node[left] {} (Bmin);

        \end{tikzpicture}
        \label{fig:spect}
    \end{figure*}
}
\newline
and such that if $P_1 \subsetneq P_2 \subsetneq P_3$, in the above diagram, then if $P_1$ is on the right side (contains one of $Q_{k+1}, \ldots ,Q_m)$ there are $\sup(\aleph_0, |T/M|)$ prime ideals $P'$ in $R$ with $P_1 \subsetneq P' \subsetneq P_3$, and otherwise (that is, if $P_1$ contains one of $Q_{1}, \ldots ,Q_k$) there are $|\Spec(T)|$
prime ideals $P'$ in $R$ with $P_1 \subsetneq P' \subsetneq P_3$. Essentially, the right side is much smaller than the left side (we take this to be a diagram such that prime ideals of different chains are incomparable). 

\par In \cite{monster}, the authors also find an upper bound on the cardinality of $\Min(Q_1 + Q_2)$ for the constructed subring $A$, and show that $A$ can be constructed to meet those upper bounds, giving further control over the spectrum as a partially ordered set. In the following example, we use the results from this paper and from \cite{monster} to find the known possible spectra of precompletions of a particular complete local ring. \\
\newpage
\begin{ex}\label{standard}
    Let $p_1 = x+y$, $p_2 = y+z$, and $p_3 = z + x$. Let T = $\mathbb{Q}[[x,y,z]]/(p_1p_2p_3)$. The partially ordered set $\Spec(T)$ is as follows: 
    \begin{center}
    \begin{tikzpicture}[scale = .8]
        \node[label=above:{$(x,y,z)$}] (top) at (9, 3) {};
        
        \node[shape=rectangle,draw=black,minimum size=0.5cm,label=center:{$\mathfrak{c}$}] (mid1) at (7, 1) {};
        \node[shape=rectangle,draw=black,minimum size=0.5cm,label=center:{$\mathfrak{c}$}] (mid2) at (9, 1) {};    \node[shape=rectangle,draw=black,minimum size=0.5cm,label=center:{$\mathfrak{c}$}] (mid3) at (11, 1) {};
        
        \node[] (shared1) at (8,1) {};
        \node[] (shared2) at (10,1) {};
        \node[] (shared3) at (12,1) {};
        
        \node[label=below:{\small$( x + y )$}] (min1) at (7, -1) {};
        \node[label=below:{\small$( y + z)$}] (min2) at (9, -1) {};
        \node[label=below:{\small$( z + x )$}] (min3) at (11,-1) {};
        
        \foreach \n in {top,shared1,shared2,shared3,min1,min2,min3}
            \node at (\n)[circle,fill,inner sep=1.5pt]{};
        
        \foreach \n in {shared1,shared2,shared3,mid1,mid2,mid3}
        \path [-] (top) edge node[left] {} (\n);
        
        \path [-] (mid1) edge node[left] {} (min1);
        \path [-] (mid2) edge node[left] {} (min2);
        \path [-] (mid3) edge node[left] {} (min3);
        
        \path [-] (shared1) edge node[left] {} (min1);
        \path [-] (shared1) edge node[left] {} (min2);
        
        \path [-] (shared2) edge node[left] {} (min2);
        \path [-] (shared2) edge node[left] {} (min3);
        
        \path [-] (shared3) edge node[left] {} (min3);
        \path [-] (shared3) edge node[left] {} (min1);
    \end{tikzpicture}
\end{center}

Using Theorem \ref{monster and partial}, along with the results in \cite{monster}, we can construct precompletions of $T$ which have the following prime spectra:
\begin{center}
    \begin{tikzpicture}[scale = .8]
        \node[] (top) at (9, 3) {};
        
        \node[shape=rectangle,draw=black,minimum size=0.5cm,label=center:{$\mathfrak{c}$}] (mid1) at (7, 1) {};
        \node[shape=rectangle,draw=black,minimum size=0.5cm,label=center:{$\mathfrak{c}$}] (mid2) at (9, 1) {};    \node[shape=rectangle,draw=black,minimum size=0.5cm,label=center:{$\mathfrak{c}$}] (mid3) at (11, 1) {};
        
        \node[] (shared1) at (8,1) {};
        \node[] (shared2) at (10,1) {};
        \node[] (shared3) at (12,1) {};
        
        \node[] (min1) at (7, -1) {};
        \node[] (min2) at (9, -1) {};
        \node[] (min3) at (11,-1) {};
        
        \foreach \n in {top,shared1,shared2,shared3,min1,min2,min3}
            \node at (\n)[circle,fill,inner sep=1.5pt]{};
        
        \foreach \n in {shared1,shared2,shared3,mid1,mid2,mid3}
        \path [-] (top) edge node[left] {} (\n);
        
        \path [-] (mid1) edge node[left] {} (min1);
        \path [-] (mid2) edge node[left] {} (min2);
        \path [-] (mid3) edge node[left] {} (min3);
        
        \path [-] (shared1) edge node[left] {} (min1);
        \path [-] (shared1) edge node[left] {} (min2);
        
        \path [-] (shared2) edge node[left] {} (min2);
        \path [-] (shared2) edge node[left] {} (min3);
        
        \path [-] (shared3) edge node[left] {} (min3);
        \path [-] (shared3) edge node[left] {} (min1);
        
        %%%%%%%%%%
        
        \node[] (top) at (17, 3) {};
        
        \node[shape=rectangle,draw=black,minimum size=0.5cm,label=center:{$\aleph_0$}] (mid1) at (15, 1) {};
        \node[shape=rectangle,draw=black,minimum size=0.5cm,label=center:{$\mathfrak{c}$}] (mid2) at (17, 1) {};    \node[shape=rectangle,draw=black,minimum size=0.5cm,label=center:{$\mathfrak{c}$}] (mid3) at (19, 1) {};
        
        \node[] (shared1) at (16,1) {};
        \node[] (shared2) at (18,1) {};
        \node[] (shared3) at (20,1) {};
        
        \node[] (min1) at (15, -1) {};
        \node[] (min2) at (17, -1) {};
        \node[] (min3) at (19,-1) {};
        
        \foreach \n in {top,shared1,shared2,shared3,min1,min2,min3}
            \node at (\n)[circle,fill,inner sep=1.5pt]{};
        
        \foreach \n in {shared1,shared2,shared3,mid1,mid2,mid3}
        \path [-] (top) edge node[left] {} (\n);
        
        \path [-] (mid1) edge node[left] {} (min1);
        \path [-] (mid2) edge node[left] {} (min2);
        \path [-] (mid3) edge node[left] {} (min3);
        
        \path [-] (shared1) edge node[left] {} (min1);
        \path [-] (shared1) edge node[left] {} (min2);
        
        \path [-] (shared2) edge node[left] {} (min2);
        \path [-] (shared2) edge node[left] {} (min3);
        
        \path [-] (shared3) edge node[left] {} (min3);
        \path [-] (shared3) edge node[left] {} (min1);
    \end{tikzpicture}
\end{center}

\begin{center}
    \begin{tikzpicture}[scale = .8]
        \node[] (top) at (9, 3) {};
        
        \node[shape=rectangle,draw=black,minimum size=0.5cm,label=center:{$\aleph_0$}] (mid1) at (7, 1) {};
        \node[shape=rectangle,draw=black,minimum size=0.5cm,label=center:{$\aleph_0$}] (mid2) at (9, 1) {};    \node[shape=rectangle,draw=black,minimum size=0.5cm,label=center:{$\mathfrak{c}$}] (mid3) at (11, 1) {};
        
        \node[] (shared1) at (8,1) {};
        \node[] (shared2) at (10,1) {};
        \node[] (shared3) at (12,1) {};
        
        \node[] (min1) at (7, -1) {};
        \node[] (min2) at (9, -1) {};
        \node[] (min3) at (11,-1) {};
        
        \foreach \n in {top,shared1,shared2,shared3,min1,min2,min3}
            \node at (\n)[circle,fill,inner sep=1.5pt]{};
        
        \foreach \n in {shared1,shared2,shared3,mid1,mid2,mid3}
        \path [-] (top) edge node[left] {} (\n);
        
        \path [-] (mid1) edge node[left] {} (min1);
        \path [-] (mid2) edge node[left] {} (min2);
        \path [-] (mid3) edge node[left] {} (min3);
        
        \path [-] (shared1) edge node[left] {} (min1);
        \path [-] (shared1) edge node[left] {} (min2);
        
        \path [-] (shared2) edge node[left] {} (min2);
        \path [-] (shared2) edge node[left] {} (min3);
        
        \path [-] (shared3) edge node[left] {} (min3);
        \path [-] (shared3) edge node[left] {} (min1);
        
        %%%%%%%%%%
        
        \node[] (top) at (17, 3) {};
        
        \node[shape=rectangle,draw=black,minimum size=0.5cm,label=center:{$\aleph_0$}] (mid1) at (15, 1) {};
        \node[shape=rectangle,draw=black,minimum size=0.5cm,label=center:{$\aleph_0$}] (mid2) at (17, 1) {};    \node[shape=rectangle,draw=black,minimum size=0.5cm,label=center:{$\aleph_0$}] (mid3) at (19, 1) {};
        
        \node[] (shared1) at (16,1) {};
        \node[] (shared2) at (18,1) {};
        \node[] (shared3) at (20,1) {};
        
        \node[] (min1) at (15, -1) {};
        \node[] (min2) at (17, -1) {};
        \node[] (min3) at (19,-1) {};
        
        \foreach \n in {top,shared1,shared2,shared3,min1,min2,min3}
            \node at (\n)[circle,fill,inner sep=1.5pt]{};
        
        \foreach \n in {shared1,shared2,shared3,mid1,mid2,mid3}
        \path [-] (top) edge node[left] {} (\n);
        
        \path [-] (mid1) edge node[left] {} (min1);
        \path [-] (mid2) edge node[left] {} (min2);
        \path [-] (mid3) edge node[left] {} (min3);
        
        \path [-] (shared1) edge node[left] {} (min1);
        \path [-] (shared1) edge node[left] {} (min2);
        
        \path [-] (shared2) edge node[left] {} (min2);
        \path [-] (shared2) edge node[left] {} (min3);
        
        \path [-] (shared3) edge node[left] {} (min3);
        \path [-] (shared3) edge node[left] {} (min1);
    \end{tikzpicture}
\end{center}

%%%%%%%%
\begin{center}
    \begin{tikzpicture}[scale = 0.8]
        \node[] (A) at (9, 3) {};
        \node[shape=rectangle,draw=black,minimum size=0.5cm] (B) at (7, 1) {$\aleph_0$};
        \node[shape=rectangle,draw=black] (C) at (9, 1) {$2$};            \node[shape=rectangle,draw=black,minimum size=0.5cm] (D) at (11, 1) {$\aleph_0$};
        \node[] (E) at (8, -1) {};
        \node[] (F) at (10, -1) {};
            
        \foreach \n in {A,E,F}
            \node at (\n)[circle,fill,inner sep=1.5pt]{};
        
        \path [-] (A) edge node[left] {} (B);
        \path [-] (A) edge node[left] {} (C);
        \path [-] (A) edge node[left] {} (D);
        \path [-] (B) edge node[left] {} (E);
        \path [-] (C) edge node[left] {} (E);
        \path [-] (C) edge node[left] {} (F);
        \path [-] (D) edge node[left] {} (F);
        
        %%%%%
        
        \node[] (A) at (15, 3) {};
        \node[shape=rectangle,draw=black,minimum size=0.5cm] (B) at (13, 1) {$\mathfrak{c}$};
        \node[shape=rectangle,draw=black] (C) at (15, 1) {$2$};            \node[shape=rectangle,draw=black,minimum size=0.5cm] (D) at (17, 1) {$\aleph_0$};
        \node[] (E) at (14, -1) {};
        \node[] (F) at (16, -1) {};
            
        \foreach \n in {A,E,F}
            \node at (\n)[circle,fill,inner sep=1.5pt]{};
        
        \path [-] (A) edge node[left] {} (B);
        \path [-] (A) edge node[left] {} (C);
        \path [-] (A) edge node[left] {} (D);
        \path [-] (B) edge node[left] {} (E);
        \path [-] (C) edge node[left] {} (E);
        \path [-] (C) edge node[left] {} (F);
        \path [-] (D) edge node[left] {} (F);
        
        %%%%%
        
        \node[] (A) at (3, 3) {};
        \node[shape=rectangle,draw=black,minimum size=0.5cm] (B) at (1, 1) {$\mathfrak{c}$};
        \node[shape=rectangle,draw=black] (C) at (3, 1) {$2$};            \node[shape=rectangle,draw=black,minimum size=0.5cm] (D) at (5, 1) {$\mathfrak{c}$};
        \node[] (E) at (2, -1) {};
        \node[] (F) at (4, -1) {};
            
        \foreach \n in {A,E,F}
            \node at (\n)[circle,fill,inner sep=1.5pt]{};
        
        \path [-] (A) edge node[left] {} (B);
        \path [-] (A) edge node[left] {} (C);
        \path [-] (A) edge node[left] {} (D);
        \path [-] (B) edge node[left] {} (E);
        \path [-] (C) edge node[left] {} (E);
        \path [-] (C) edge node[left] {} (F);
        \path [-] (D) edge node[left] {} (F);
    \end{tikzpicture}
\end{center}

\begin{center}
    \begin{tikzpicture}[scale = 0.8]
        \node[] (A) at (5, 3) {};

        \node[shape=rectangle,draw=black] (C) at (5, 1) {$\mathfrak{c}$};        
        \node[] (E) at (5, -1) {};
            
        \foreach \n in {A,E}
            \node at (\n)[circle,fill,inner sep=1.5pt]{};
        
        \path [-] (A) edge node[left] {} (C);
        \path [-] (C) edge node[left] {} (E);
        
        %%%%%
        
        \node[] (A) at (12, 3) {};

        \node[shape=rectangle,draw=black] (C) at (12, 1) {$\aleph_0$};        
        \node[] (E) at (12, -1) {};
            
        \foreach \n in {A,E}
            \node at (\n)[circle,fill,inner sep=1.5pt]{};
        
        \path [-] (A) edge node[left] {} (C);
        \path [-] (C) edge node[left] {} (E);
    \end{tikzpicture}
\end{center}

\end{ex}
The precompletion spectra for the complete local ring in Example \ref{standard} which are known as a result of this paper are those of ``unbalanced" cardinality, though we include all known precompletion spectra, including those obtained in \cite{monster}.
The only possible, but unconfirmed prime spectra for this example are those in the third row of Example \ref{standard} with the boxes containing ``2" replaced with ``1".\footnote{Of course, there is also the unconfirmed possibility of having one of the boxes be cardinality $\alpha$ with $\aleph_0 < \alpha < \mathfrak{c}$. For obvious reasons, (namely the independence of ZFC of whether such cardinals even exist) we do not try to construct any such precompletions. Note that in general the disbalance we find in this paper is always between cardinalities $\beta$ and $\beta^{\aleph_0}$, where $\beta = \sup(\aleph_0, |T/M|)$. This is no coincidence - one corollary of the Generalized Continuum Hypothesis would be that there are no cardinalities between these, so looking for them in precompletions would be a useless endeavor.}
\par Note that an unbalanced cardinality structure is impossible in the case that $|T/M| = |T/M|^{\aleph_0} = |T|$, as per the argument in Remark \ref{bounds}. Hence the condition $|T/M| < |T|$ of Theorem \ref{monster and partial} is necessary. 
A question still left unanswered is whether there are cases outside the scope of \monster of \cite{monster} such that an unbalanced precompletion of this sort with a prescribed number of minimal prime ideals can be found. 
However, we can provide necessary and sufficient conditions for a complete local ring to be the completion of some local ring with unbalanced spectrum, even though we cannot always control the minimal primes well.
\hfill \\
\begin{thm} \label{unbalanced spec precompletion}
    Let $T$ be a complete local ring. 
    Then $T$ is the completion of a local ring $R$ such that there exists $Q_1,Q_2 \in \Spec(R)$ with $Spec(R/Q_1)$ and $Spec(R/Q_2)$ infinite and $|\Spec(R/Q_1)| \neq |\Spec(R/Q_2)|$ if and only if both of the following conditions are satisfied:
    \begin{enumerate}[label={(\roman*)}]
        \item $\Spec(T)$ contains at least $2$ distinct prime ideals of coheight $\ge 2$. 
        \item $|T/M| < |T|$
    \end{enumerate}
\end{thm}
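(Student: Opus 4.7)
The plan is to prove the two directions separately; the backward direction will use the partial-completion machinery developed earlier in this section.

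For the $(\Rightarrow)$ direction, suppose $(R, \mathfrak m)$ is a local ring with $\widehat R = T$ and primes $Q_1, Q_2 \in \Spec(R)$ such that $\Spec(R/Q_1)$ and $\Spec(R/Q_2)$ are infinite of distinct cardinalities. Because a Noetherian local ring of dimension $\le 1$ has finite spectrum, both $R/Q_i$ have $\dim \ge 2$, i.e., $\cht_R(Q_i) \ge 2$. Faithful flatness of the completion $R \hookrightarrow T$ lets us lift a length-$2$ chain above $Q_i$ in $R$ (via lying-over at the top followed by going-down) to such a chain above some $P_i \in \Spec(T)$ with $P_i \cap R = Q_i$; since $Q_1 \ne Q_2$ these $P_i$ are distinct, giving (i). For (ii), suppose for contradiction that $|T/M| = |T|$. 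Every nonzero non-unit of $R/Q_i$ lies in a height-$1$ prime or an associated prime, so rerunning the argument of Remark~\ref{bounds} inside $R/Q_i$ via Theorem~\ref{Sharp 2.6} yields $|\Spec(R/Q_i)| \ge |R/(M \cap R)| = |T/M| = |T|$; combined with $|\Spec(R/Q_i)| \le |R/Q_i| \le |R| \le |T|$, both cardinalities equal $|T|$, contradicting the assumed imbalance.

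For the $(\Leftarrow)$ direction, first strengthen (i): $T$ has two \emph{incomparable} primes of coheight $\ge 2$. If the given pair is already incomparable, done; otherwise, writing them as $P \subsetneq P'$, the coheight condition forces $\cht(P) \ge 3$, so $T/P$ is a complete local Noetherian domain of dimension $\ge 3$ and therefore has infinitely many pairwise-incomparable height-$1$ primes, whose preimages in $T$ supply the required incomparable coheight-$\ge 2$ primes. Fix such $P_1, P_2$ and an element $t_1 \in P_1 \setminus P_2$.

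Next, build a precompletion $A$ of $T$ with $|A| = \kappa := \sup(\aleph_0, |T/M|) < |T|$ containing $t_1$. This uses a mild modification of the PB-subring construction of Section~2: drop clause (ii) of Definition~\ref{pb-subring} (the domain-enforcing condition) and keep only the cardinality bound, so that the analogues of Lemmas~\ref{Deb 3.5}, \ref{Deb 3.8}, and~\ref{Deb 3.11} carry through. Starting from the prime subring of $T$ localized at its intersection with $M$ with $t_1$ adjoined, iterate the closing-up-of-ideals and the surjection-onto-$T/M^2$ steps; the resulting union $A$ meets the hypotheses of Proposition~\ref{cpm}, hence $\widehat A = T$ and $|A| = \kappa$. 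Set $\mathfrak q_i = P_i \cap A$. The element $t_1$ lies in $\mathfrak q_1 \setminus \mathfrak q_2$, so $\mathfrak q_1 \not\subseteq \mathfrak q_2$, and because completion preserves Krull dimension, $\dim(A/\mathfrak q_i) = \dim(T/\mathfrak q_i T) \ge \dim(T/P_i) \ge 2$.

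Finally, apply Theorem~\ref{partial completion avoidance} to $A$ with the single incomparable prime $\mathfrak q_1$ to obtain a faithfully flat local extension $(S,N)$ of $A$ with $\widehat S = T$ and $S/\mathfrak q_1 S \cong A/\mathfrak q_1$. Take $Q_1 := \mathfrak q_1 S$, which is prime by Lemma~\ref{primes preserved} and satisfies $|\Spec(S/Q_1)| = |\Spec(A/\mathfrak q_1)| \le |A| = \kappa$. Take $Q_2 \in \Min(\mathfrak q_2 S)$ chosen with $\dim(S/Q_2) = \dim(S/\mathfrak q_2 S) \ge \dim(A/\mathfrak q_2) \ge 2$ (faithful flatness of $S$ over $A$ preserves dimension). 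Since $Q_2 \cap A = \mathfrak q_2 \not\supseteq \mathfrak q_1$, we have $\mathfrak q_1 S \not\subseteq Q_2$, and Theorem~\ref{partial completion avoidance} applied to any length-$2$ chain through $Q_2$ yields $|\Spec(T)| \ge |T|$ intermediate primes, so $|\Spec(S/Q_2)| \ge |T| > \kappa \ge |\Spec(S/Q_1)|$. Both spectra are infinite (the quotients have dimension $\ge 2$), completing the proof with $R := S$. The main obstacle is the small-precompletion construction: the PB-subring machinery must be adapted to the non-domain setting by stripping out the associated-prime-avoidance clauses, which is routine but requires careful bookkeeping.
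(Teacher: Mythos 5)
Your forward direction is fine and matches the paper's (coheight $\ge 2$ from infinitude of the quotient spectra, plus the Remark \ref{bounds} argument for $|T/M| < |T|$), and the final partial-completion step of your backward direction is essentially the paper's argument, just organized around two incomparable coheight-$\ge 2$ primes rather than the paper's case split on minimal primes. The problem is the middle step: the existence of a precompletion $A$ of an \emph{arbitrary} complete local ring $T$ with $|A| = \sup(\aleph_0, |T/M|)$. You assert this follows by "stripping out the associated-prime-avoidance clauses" from the PB-subring machinery of Section 2 and that this is "routine." It is not, and this is a genuine gap. First, the hypotheses of the present theorem do not exclude $M \in \Ass(T)$ (condition (i) only forces $\dim(T) \ge 2$, which is compatible with $M$ being an embedded prime), and Lemma \ref{Deb 3.8} breaks immediately in that case: it needs $M^2 \not\subseteq P$ for every $P \in \Ass(T)$ in order to invoke Lemma \ref{Deb 3.6}. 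Second, and more fundamentally, the ideal-closing step (Lemma \ref{Deb 3.5}, imported from \cite{pippa}) is not independent of clause (ii) of Definition \ref{pb-subring}: solving $c = \sum a_i t_i$ with "generic" $t_i$ and keeping control of $IT \cap S$ is exactly where the avoidance of associated primes is used, and without some replacement for it Proposition \ref{cpm} cannot be applied to conclude $\widehat{A} = T$. Arranging $IT \cap A = I$ for a small subring of a general (non-reduced, non-equidimensional) $T$ is the hard technical content here; it is precisely what the paper outsources to Theorem 3.19 of \cite{monster}, which produces a precompletion of cardinality $\sup(\aleph_0,|T/M|)$ with prescribed minimal formal fibers for any $T$ admitting a minfeasible partition (and the trivial partition always works). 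Your proof needs either that citation or a genuinely new construction, not a cosmetic edit of Section 2.

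Two smaller points, neither fatal once the above is repaired. Your preliminary reduction to \emph{incomparable} primes of coheight $\ge 2$ is correct but unnecessary for the paper's route, since the paper instead distinguishes whether one or at least two \emph{minimal} primes of $T$ have coheight $\ge 2$ and completes at a suitably chosen prime in each case; your version does have the mild advantage of avoiding that case analysis, at the cost of having to force $t_1 \in A$ so that $P_1 \cap A \not\subseteq P_2 \cap A$ (which your modified construction would also have to accommodate). Also, your claim $Q_2 \cap A = \mathfrak q_2$ for $Q_2 \in \Min(\mathfrak q_2 S)$ should be justified by going-down for the faithfully flat map $A \to S$; as written it is asserted without proof.
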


\begin{proof}
    We first prove these conditions are necessary: let $R$ be a precompletion of $T$, and $Q_1,Q_2 \in \Spec(R)$ with $|\Spec(R/Q_1)| < |\Spec(R/Q_2)|$ (and both cardinalities infinite). 
    Since both cardinalities are infinite, both $Q_1$ and $Q_2$ have coheight $\ge 2$. Therefore, some element of the formal fiber of each must have coheight $\ge 2$, so condition (i) holds.
    Also, note that as in Remark \ref{bounds}, $|T/M| \le |\Spec(R/Q_1)| < |\Spec(R/Q_2)| \le |T|$ and therefore $|T/M| < |T|$ so condition (ii) is satisfied. 
    \par Now let $T$ be a complete local ring such that conditions (i) and (ii) hold. 
    The proof hinges on the fact that if $\Min(T) = \{Q_1,...,Q_n\}$, then the partition $\{\{Q_1\},...,\{Q_n\}\}$ is a minfeasible partition that always satisfies the conditions of \monster in \cite{monster},
    %did we ever define a feasible partition in this paper?
    so that by Theorem 3.19 in \cite{monster}, there exists a precompletion $R$ of $T$ of cardinality $\sup(\aleph_0, |T/M|)$ which has $n$ distinct minimal prime ideals $\{Q_1 \cap R,...,Q_n \cap R\}$.
    We now consider two cases.
    \par First, suppose that there is only one minimal prime ideal of $T$ with coheight $\ge 2$, which we denote by $Q$. 
    Then $\dim(T) \ge 3$ as a consequence of condition (i).
    Recall that $R$ from the previous paragraph satisfies $|R| = \sup(\aleph_0, |T/M|).$
    Let $P \in \Spec(R)$ be any coheight $2$ prime ideal, and apply Theorem \ref{partial completion theorem} with $I = P$, producing another precompletion $A$ as the $P$-adic completion of $R$. 
    Note that $\cht(Q \cap R) \ge 3$, so in particular $Q \cap R \not \supseteq P$. Then we have that $|\Spec(A/(Q\cap R)A)| = |\Spec(T)| = |T|$, and $|\Spec(A/PA)| = |\Spec(R/P)| = \sup(\aleph_0, |T/M|)$. Thus in the case that there is exactly one minimal prime ideal in $T$ of coheight $\ge 2$, we can construct a precompletion as desired. 
    \par Now suppose that there exist at least $2$ minimal prime ideals in $Q_1,Q_2 \in \Spec(T)$ such that each has coheight $\ge 2$.
    % by Theorem 3.19 in \cite{monster} %delete?
    % there exists a precompletion $R$ such that $|R| = |T/M|$ (or is countable if $T/M$ is finite) and such that $Q_1 \cap R \neq Q_2 \cap R$. 
    Apply Theorem \ref{partial completion theorem} with $I = Q_1 \cap R$, producing a precompletion $A$ as the $(Q_1 \cap R)$-adic completion of $R$. 
    Then we have that $|\Spec(A/(Q_2 \cap R)A)| = |\Spec(T)| = |T|$, and $|\Spec(A/(Q_1 \cap R)A)| = |\Spec(R/(Q_1 \cap R))| = \sup(\aleph_0, |T/M|)$. Thus in the case that there are at least two minimal prime ideals of $T$ of coheight $\ge 2$, we can construct a precompletion as desired. 
\end{proof}

\par Note that if $T$ contains multiple minimal prime ideals of coheight $\ge 2$, then by the construction in Theorem \ref{unbalanced spec precompletion}, we can embed a diagram as the one in Theorem \ref{monster and partial} into the spectrum of some precompletion $R$. However, our ability to control the number of minimal prime ideals in this precompletion hinges on our ability to find a suitable minfeasible partition.
\par Furthermore, note that we are not constrained to apply this technique only to minimal prime ideals. 
The most general form, Theorem \ref{partial completion avoidance}, gives us the ability to essentially pick the union of finitely many varieties, defined by any $n$ prime ideals $P_1, \ldots ,P_n$, and by completing at the ideal $P_1 \cap \ldots \cap P_n$, preserve the spectrum of the ring within those varieties, while (in the interesting case where we begin with a precompletion $A$ such that $|\Spec(A)| < |\Spec(T)|$) increasing the number of prime ideals everywhere else. Some of these may be minimal, or perhaps none may be. 
\par This also gives a quick example to show that the reverse inequality of Proposition \ref{top-heavy prime} is not true, and that in fact counterexamples are abundant, in that we can take any local countable ring of dimension $\ge 4$ and by completing at a prime ideal $P$ with $\text{ht}(P) = \text{coht}(P) = 2$, produce a local ring extension such that the image of $P$ contains uncountably many prime ideals, but is contained in only countably many---and such that this new ring will have the same completion as the old (with respect to the maximal ideal). 
Thus, while Propositions 4.6 and 4.7 tell us ``top-heavy" spectra are impossible, many complete local rings have precompletions whose spectra are ``bottom-heavy."
\par Of course, one advantage to completing with respect to an intersection of minimal prime ideals is that it is guaranteed that the minimal prime ideals in the intersection don't ``split"---that is, that their formal fibers are a single element (note that in Example \ref{domain partial completion example}, we were unable to say anything about the partial completion being a domain). 
When performing a partial completion on a precompletion, this can be used (as it is in Theorem \ref{monster and partial}) to control the formal fibers of all minimal prime ideals, provided the initial precompletion has singleton formal fibers for certain minimal primes. 

\section*{Acknowledgements}
We would like to thank the National Science Foundation (grant DMS-1659037), Williams College, and the Clare Boothe Luce Program for providing funding for this research.

\bibliographystyle{plain}
\bibliography{references}

\end{document}